\let\stdmaketitle\maketitle
\definecolor{mycol}{RGB}{19,48,128}
\definecolor{reddishbrown}{HTML}{A52A2A}
\newcommand{\inlineitem}[1][]{
	\ifnum\enit@type=\tw@
	{\descriptionlabel{#1}}
	\hspace{\labelsep}
	\else
	\ifnum\enit@type=\z@
	\refstepcounter{\@listctr}\fi
	\quad\@itemlabel\hspace{\labelsep}
	\fi}
\setlist{leftmargin=*}
\newcommand{\enumlabel}[1]{\textbf{#1.\arabic*}}
\newcommand{\enumref}[1]{{#1.\arabic*}}
\crefname{section}{\S}{\S\S}
\Crefname{section}{\S}{\S\S}
\newcommand{\Secref}[1]{Section \ref{#1}}
\renewcommand{\figurename}{\textbf{Fig.}}
\renewcommand{\tablename}{\textbf{Table}}
\newcounter{savefootnote}
\newcounter{symfootnote}
\newcommand{\symfootnote}[1]{%
	\setcounter{savefootnote}{\value{footnote}}%
	\setcounter{footnote}{\value{symfootnote}}%
	\ifnum\value{footnote}>8\setcounter{footnote}{0}\fi%
	\let\oldthefootnote=\thefootnote%
	\renewcommand{\thefootnote}{\fnsymbol{footnote}}%
	\footnote{#1}%
	\let\thefootnote=\oldthefootnote%
	\setcounter{symfootnote}{\value{footnote}}%
	\setcounter{footnote}{\value{savefootnote}}%
}
\newcommand{\eps}{\varepsilon}
\newcommand{\calC}{{\mathcal{C}}}
\newcommand{\calE}{{\mathcal{E}}}
\newcommand{\calF}{{\mathcal{F}}}
\newcommand{\calR}{{\mathcal{R}}}
\newcommand{\calQ}{{\mathcal{Q}}}
\newcommand{\calG}{{\mathcal{G}}}
\newcommand{\calH}{{\mathcal{H}}}
\newcommand{\calK}{{\mathcal{K}}}
\newcommand{\calM}{{\mathcal{M}}}
\newcommand{\calN}{{\mathcal{N}}}
\newcommand{\calL}{{\mathcal{L}}}
\newcommand{\rr}{{\mathbb R}}
\newcommand{\rnn}{{\mathbb R_{+}}}
\newcommand{\rp}{{\mathbb R_{++}}}
\newcommand{\sps}[1]{{\mathcal S_{+}^{#1}}}
\newcommand{\spd}[1]{{\mathcal S_{++}^{#1}}}
\newcommand{\cc}[1]{{\mathcal C^{#1}}}
\newcommand{\gsc}[1]{{\calF_{M_{#1},\nu}}}
\newcommand{\scsmooth}[1]{{\mathcal{S}_{M_{#1},\nu}^{\mu}}}
\newcommand{\stt}{\mathop{\, | \,}\nolimits}
\newcommand{\card}{\mathop{\rm card}\nolimits}
\newcommand{\sign}{\mathop{\rm sign}\nolimits}
\newcommand{\diag}{\mathop{\rm diag}\nolimits}
\newcommand{\prox}{\mathop{\rm prox}\nolimits}
\newcommand{\dom}{\mathop{\rm dom}\nolimits}
\newcommand{\epi}{\mathop{\rm epi}\nolimits}
\newcommand{\epis}{\mathop{\rm epi_s}\nolimits}
\newcommand{\epilim}{\mathop{\rm \textrm{e--}\lim}\nolimits}
\newcommand{\epiarrow}{\mathop{\underrightarrow{e}}\nolimits}
\newcommand{\conlim}{\mathop{\rm \textrm{c--}\lim}\nolimits}
\newcommand{\conarrow}{\mathop{\underrightarrow{c}}\nolimits}
\newcommand{\pointlim}{\mathop{\rm \textrm{p--}\lim}\nolimits}
\newcommand{\pointarrow}{\mathop{\underrightarrow{p}}\nolimits}
\newcommand{\hess}[1]{\mathop{#1''}\nolimits}
\newcommand{\third}[1]{\mathop{#1'''}\nolimits}
\newcommand{\gradn}{\mathop{\nabla}\nolimits}
\newcommand{\hessn}{\mathop{\nabla^2}\nolimits}
\newcommand{\thirdn}{\mathop{\nabla^3}\nolimits}
\newcommand{\extended}[1]{\mathop{#1\colon\rr^n\to \rr \cup \{+\infty\}}\nolimits}
\newcommand{\infconv}[2]{\mathop{#1 \square #2}\nolimits}
\newcommand{\infconvdot}[2]{\mathop{#1 \boxdot #2}\nolimits}
\newcommand{\gps}{\mathop{g_s}\nolimits}
\newcommand{\pclsc}[1]{\mathop{\Gamma_0(#1)}\nolimits}
\newcommand{\cf}{{cf.}}
\newcommand{\etc}{{etc.}}
\newcommand{\mapg}[1]{\mathop{G_{\alpha_k g}(#1)}\nolimits}
\newenvironment{proof*}[1][\proofname]{\par
	\pushQED{\qed}%
	\normalfont \partopsep=\z@skip \topsep=\z@skip
	\trivlist
	\item[\hskip\labelsep
	\itshape
	#1\@addpunct{.}]\ignorespaces
}{%
	\popQED\endtrivlist\@endpefalse
}
\definecolor{mycol}{RGB}{19,48,128}
\providecommand{\keywords}[1]{\textbf{Keywords:} #1}
\providecommand{\amsclassification}[1]{\textbf{2020 AMS Subject Classification:} #1}
\renewcommand{\maketitle}{\bgroup\setlength{\parindent}{0pt}
	\begin{center}
		\textbf{\Large \@title}\vspace*{1.2em}
		
		\textbf{\@author}
	\end{center}
	\egroup
}
\let\maketitle\stdmaketitle
\title{
	Self-concordant smoothing in proximal quasi-Newton algorithms for large-scale convex composite optimization
	\thanks{
		This work was funded by the European Union (ERC Advanced Research Grant COMPACT, No. 101141351). Views and opinions expressed are however those of the authors only and do not necessarily reflect those of the European Union or the European Research Council. Neither the European Union nor the granting authority can be held responsible for them.
	}
}
\author{
	Adeyemi D.~Adeoye\thanks{
		IMT School for Advanced Studies Lucca, 
		Piazza S.Francesco, 19, 55100 Lucca, Italy.
		{\it E-mails:} \{adeyemi.adeoye,~alberto.bemporad\}@imtlucca.it}
	\and
	Alberto Bemporad\footnotemark[2]
}
\date{}
\begin{document}
\maketitle

\begin{abstract} 
We introduce a notion of \emph{self-concordant smoothing} for minimizing the sum of two convex functions, one of which is smooth and the other nonsmooth. The key highlight is a natural property of the resulting problem's structure that yields a variable metric selection method and a step length rule especially suited to proximal quasi-Newton algorithms. Also, we efficiently handle specific structures promoted by the nonsmooth term, such as $\ell_{1}$-regularization and group lasso penalties. A convergence analysis for the class of proximal quasi-Newton methods covered by our framework is presented. In particular, we obtain guarantees, under standard assumptions, for two algorithms: \texttt{Prox-N-SCORE} (a proximal Newton method) and \texttt{Prox-GGN-SCORE} (a proximal generalized Gauss-Newton method). The latter uses a low-rank approximation of the Hessian inverse, reducing most of the cost of matrix inversion and making it effective for overparameterized machine learning models. Numerical experiments on synthetic and real data demonstrate the efficiency of both algorithms against state-of-the-art approaches. A Julia implementation is publicly available at \url{https://github.com/adeyemiadeoye/SelfConcordantSmoothOptimization.jl}.
\end{abstract}
\keywords{Nonsmooth optimization; convex optimization; machine learning; regularization; self-concordant functions}\\[1ex]
\amsclassification{
65K05; % Mathematical programming methods
90C06; % Large-scale problems 
49M15 % Newton-type methods  
}

\section{Introduction}
We consider the composite optimization problem
\begin{align}
	\min\limits_{x\in\rr^n} \calL(x) \coloneqq f(x) + g(x), \label{eq:prob}
\end{align}
where $f$ is a smooth, convex loss function and $g$ is a closed, proper, convex (nonsmooth) regularization function. Several optimization problems in engineering, machine learning, and finance can be written in the form \eqref{eq:prob}, including sparse signal recovery, image processing, compressed sensing, and most classification and regression tasks in machine learning. Proximal gradient algorithms are arguably the most widely used methods for such problems (see \cite{combettes2011proximal} and the references therein for a comprehensive treatment). These algorithms handle the nonsmooth term $g$ efficiently by employing its \emph{proximal} operator, which is typically assumed to be computable at low cost.  Among other notable approaches is the \emph{partial smoothing} framework of \cite{beck2012smoothing}, which reassesses early \emph{full} smoothing methods that iteratively replace $g$ with a smooth approximation, \eg, \cite{ben2006smoothing, bertsekas2009nondifferentiable}, and instead smooths only one component of $g$ while leaving the other unchanged. The setting they consider has the form $g(x)=\calR(x)+\Omega(x)$, where $\calR$ is typically a scaled $\ell_1$-norm ($\beta\|x\|_{1}$) that promotes sparsity, and $\Omega$ encodes additional structure (group, fused, \etc). The main motivation remains to use the proximal operator of the (unchanged) nonsmooth component so that the intended structures are preserved. Notably, \emph{fast} proximal gradient schemes \cite{nesterov1983method,nesterov2005smooth,tseng2008accelerated,beck2009fast} have become standard for solving such problems.

Although such first-order schemes outperform subgradient or bundle methods \cite{beck2012smoothing}, they often yield only modest solution accuracy \cite{patrinos2013proximal}. Incorporating second-order information typically improves both convergence speed and solution accuracy. The main drawback is the computational cost associated with full Hessian inverse. Several prior works \cite{becker2012quasi,lee2014proximal,patrinos2014forward,tran2015composite,stella2017forward,themelis2018forward} therefore incorporate approximate second-order information into proximal gradient schemes to mimic the performance of true proximal Newton methods. To ensure global convergence, many of these schemes rely on line search or trust-region procedures, which introduce additional computational cost. Some other works avoid such safeguards by imposing extra structure on the smooth term $f$. For instance, in the convex case \cite{tran2015composite} assumes that $f$ is self-concordant; this assumption yields efficient step length and correction rules but confines the approach to settings where these conditions hold (see \cite{mishchenko2023regularized,rodomanov2021greedy}). In contrast, we propose a step length selection rule specifically for proximal quasi-Newton methods; it is derived from a self-concordance-like structure inherent in our scheme and does not demand that $f$ or $g$ be self-concordant.

In particular, we \emph{regularize}\footnote{In this work, we use ``regularization'' and ``smoothing'' interchangeably but use ``regularization'' to emphasize explicit addition of a smooth function (a smooth approximation of the \emph{nonsmooth part} of the problem) to the \emph{smooth part} of the problem.} problem \eqref{eq:prob} by a second smooth function $g_s$, resulting in the following problem:
\begin{align}
	\min\limits_{x\in\rr^n} \calL_s(x) \coloneqq f(x) + \gps(x;\mu) + g(x), \label{eq:partialsmooth-prob}
\end{align}
where\footnote{We occasionally write $g_s(x)$ instead of $g_s(x;\mu)$ to refer to the same function.} $\gps$ is a self-concordant, epi-smoothing function for $g$ with a positive smoothing parameter $\mu$ (see \defnref{def:smooth-function}). By construction (see \Secref{sec:scsmoothing}), the functions $g$ and $\gps$ do not conflict; therefore, efficient proximal schemes can be used to iteratively solve problem \eqref{eq:partialsmooth-prob} and, for a suitable choice of $\mu$, recover the solution of the original formulation \eqref{eq:prob} (see Sections~\ref{sec:prox-N} and \ref{sec:experiments}). The smooth regularizer $\gps$ serves two main algorithmic purposes in this work. First, it provides an adaptive step length selection method analogous to the Newton-decrement framework but without requiring any self-concordance information about $f$. Second, its Hessian has a simple diagonal structure that can be exploited as a variable metric to scale the proximal operator of $g$ efficiently. As a result, the regularization enhances both the solvability of the smooth part and the handling of the nonsmooth component.

While our development does not rely on any particular structure of $g$, \Secref{sec:structured} shows how known structures can be incorporated, extending the approach to a broader class of structured penalties. For lasso and multi-task regression with structured sparsity, we relate Nesterov's smoothing \cite{nesterov2005smooth} to our framework and combine the ``prox-decomposition'' property of $g$ with the smoothness of $g_s$, thereby enabling straightforward treatment of such structures.

Most notably, three observations are vital to the development of our algorithmic framework:
\begin{enumerate}
	\item For many practical optimization problems, \eg, those that arise in modern machine learning, proximal Newton methods enjoy powerful convergence guarantees but are often computationally prohibitive. This motivates the use of proximal quasi-Newton schemes that use low-rank updates at each iteration (see \Secref{sec:ggnscore}).
	
	\item The infimal-convolution smoothing technique employed to construct $\gps$ uncovers a structure that falls within the self-concordant regularization (SCORE) framework of \cite{adeoye2021sc,adeoye2023score}. Consequently, we can devise an efficient adaptive step length rule for proximal quasi-Newton algorithms without requiring the original problem to be self-concordant. In other words, our development extends SCORE so that it accommodates nonsmooth regularizers while preserving problem-specific structure.
	
	\item The notion of \emph{epi-smoothing functions} introduced in \cite{burke2013epi} permit a principled combination of the smooth regularizer $g_{s}$ with proximal algorithms that handles the nonsmooth term $g$, assuming an efficient method exists for evaluating its proximal operator.  Moreover, the diagonal Hessian of $g_{s}$ serves as a natural variable metric for the resulting scheme, enabling efficient computation of the scaled proximal operator.
\end{enumerate}

Burke and Hoheisel \cite{burke2013epi,burke2017epi} developed the notion of \emph{epi-smoothing} for studying several epigraphical convergence (\emph{epi-convergence}) properties for convex composite functions by combining the infimal convolution smoothing framework due to Beck and Teboulle \cite{beck2012smoothing} with the idea of \emph{gradient consistency} due to Chen \cite{chen2012smoothing}. The key variational analysis tool used throughout their development is the \emph{coercivity} of the class of regularization kernels studied in \cite{beck2012smoothing}. In particular, they establish the close connection between epi-convergence of the regularization functions and supercoercivity of the regularization kernel. Then, based on the above observations, we synthesize this idea with the notion of \emph{self-concordant regularization} \cite{adeoye2021sc,adeoye2023score} to propose two proximal-type algorithms, viz., \texttt{Prox-N-SCORE} (Algorithm \ref{alg:NewtonSCOREProx}) and \texttt{Prox-GGN-SCORE} (Algorithm \ref{alg:GGNSCOREProx}), for convex composite minimization.

\paragraph*{Paper organization.} The rest of this paper is organized as follows: In \Secref{sec:notations}, we present some notations and background on convex analysis. In \Secref{sec:smoothing}, we establish our self-concordant smoothing notion with some properties and results. We describe our proximal quasi-Newton scheme in \Secref{sec:prox-N}, and present the \texttt{Prox-N-SCORE} and \texttt{Prox-GGN-SCORE} algorithms. In \Secref{sec:structured}, we describe an approach for handling specific structures promoted by the nonsmooth function $g$ in problem \eqref{eq:prob}, and propose a practical extension of the so-called \emph{prox-decomposition} property of $g$ for the self-concordant smoothing framework, which has certain in-built smoothness properties. Convergence properties of the \texttt{Prox-N-SCORE} and \texttt{Prox-GGN-SCORE} algorithms are studied in \Secref{sec:convergence}. In \Secref{sec:experiments}, we present some numerical simulation results for our proposed framework with an accompanying Julia package\footnote{\url{https://github.com/adeyemiadeoye/SelfConcordantSmoothOptimization.jl}}, and compare the results with other state-of-the-art approaches. Finally, we give a concluding remark and discuss prospects for future research in \Secref{sec:conclusions}.

\section{Notation and preliminaries}\label{sec:notations}
We denote by $\bar{\rr}\coloneqq \rr \cup \{-\infty, +\infty\}$ the set of extended real numbers. The sets $\rnn \coloneqq [0, +\infty[$ and $\rp \coloneqq \rnn \backslash \{0\}$, respectively, denote the set of nonnegative and positive real numbers. Let $\extended{g}$ be an extended real-valued function. The \emph{(effective) domain} of $g$ is given by $\dom{g} \coloneqq \{x\in \rr^n \stt g(x) < +\infty \}$ and its \emph{epigraph} (resp., \emph{strict epigraph}) is given by $\epi{g} \coloneqq \{(x, \gamma)\in \rr^n \times \rr \stt g(x) \le \gamma \}$ (resp., $\epis{g} \coloneqq \{(x, \gamma)\in \rr^n \times \rr \stt g(x) < \gamma \}$). Given $\gamma \in \rp$, the $\gamma$-sublevel set of $g$ is $\Gamma_\gamma(g) \coloneqq \{x\in \rr^n \colon g(x) \le \gamma\}$. The standard inner product between two vectors $x,y\in\rr^n$ is denoted by $\left<\cdot,\cdot\right>$, that is,  $\left<x,y\right> \coloneqq x^\top y$, where $x^\top$ is the transpose of $x$.

For an $n\times n$ matrix $H$, we write $H\succ 0$ (resp., $H\succeq 0$) to say $H$ is positive definite (resp., positive semidefinite). The sets $\sps{n}$ and $\spd{n}$, respectively, denote the set of $n\times n$ symmetric positive semidefinite and symmetric positive definite matrices. The set $\set{\diag(v)\stt v\in\rr^n}$, where $\diag\colon \rr^n \to \rr^{n\times n}$, defines the set of all diagonal matrices in $\rr^{n\times n}$. Matrix $I_d$ denotes the $d\times d$ identity matrix. We denote by $\card(\calG)$, the cardinality of a set $\calG$. For any two functions $f$ and $g$, we define $(f\circ g)(\cdot) \coloneqq f(g(\cdot))$. We denote by $\calC^k(\rr^n)$, the class of $k$-times continuously-differentiable functions on $\rr^n$, $k\in \rnn$. If the $p$-th derivatives of a function $f\in \cc{k}(\rr^n)$ is $L_f$-Lipschitz continuous on $\rr^n$ with $p\le k$, $L_f\in \rnn$, we write $f\in \calC_{L_f}^{k,p}(\rr^n)$. The notation $\norm{\cdot}$ stands for the standard Euclidean (or $2$-) norm $\norm{\cdot}_2$. We define the weighted norm induced by $H \in \spd{n}$ by $\norm{x}_H \coloneqq \left<Hx, x\right>^{\frac{1}{2}}$, for $x\in \rr^n$. An Euclidean ball of radius $r$ centered at $\bar{x}$ is denoted by $\mathcal{B}_r(\bar{x}) \coloneqq \{x \in \rr^n \stt \norm{x - \bar{x}} \le r \}$. Associated with a given $H \in \spd{n}$, the (Dikin) ellipsoid of radius $r$ centered at $\bar{x}$ is defined by $\calE_r(\bar{x})\coloneqq \{x\in \rr^n \stt \norm{x-\bar{x}}_H \le r\}$. We define the spectral norm $\|A\| \equiv \|A\|_2$ of a matrix $A\in \rr^{m\times n}$ as the square root of the maximum eigenvalue of $A^\top A$, where $A^\top$ is the transpose of $A$.

A convex function $\extended{g}$ is said to be \emph{proper} if $\dom{g} \ne \emptyset$. The function $g$ is said to be lower semicontinuous (lsc) at $y$ if $g(y) \le \liminf\limits_{x\to y} g(x)$; if it is lsc at every $y\in \dom g$, then it is said to be lsc on $\dom g$. We denote by $\pclsc{D}$ the set of proper convex lsc functions from $D \subseteq \rr^n$ to $\rr \cup \{+\infty\}$. Given $g\in \cc{3}(\dom{g})$, we respectively denote by $g'(t)$, $g''(t)$ and $g'''(t)$ the first, second and third derivatives of $g$, at $t\in \rr$, and by $\gradn _x g(x)$, $\hessn _x g(x)$, and $\thirdn _x g(x)$ the gradient, Hessian and third-order derivative tensor of $g$, respectively, at $x\in \rr^n$; if the variables with respect to which the derivatives are taken are clear from context, the subscripts are omitted. If $\hessn g(x) \in \spd{n}$ for a given $x\in \rr^n$, then the \emph{local} norm $\norm{\cdot}_x$ with respect to $g$ at $x$ is defined by $\norm{d}_x\coloneqq \left<\hessn g(x)d,d\right>^{1/2}$, the weighted norm of $d$ induced by $\hessn g(x)$. The associated dual norm is denoted $\norm{v}_x^{\diamond}  \coloneqq \left<\hessn g(x)^{-1}v,v\right>^{1/2}$, for $v\in \rr^n$. The subdifferential $\partial g \colon \rr^n \to 2^{\rr^n}$ of a proper function $\extended{g}$ is defined by $x \mapsto \set{u \in \rr^n \stt (\forall y \in \rr^n) \, \left<y-x,u\right> + g(x) \le g(y)}$, where $2^{\rr^n}$ denotes the set of all subsets of $\rr^n$. The function $g$ is said to be subdifferentiable at $x\in \rr^n$ if $\partial g(x) \ne \emptyset$; the subgradients of $g$ at $x$ are the members of $\partial g(x)$.

We define set convergence in the sense of Painlev\'{e}-Kuratowski. Let $\mathbb{N}$ denote the set of natural numbers. Let $\{C_k\}_{k\in \mathbb{N}}$ be a sequence of subsets of $\rr^n$. The outer limit of $\{C_k\}_{k\in \mathbb{N}}$ is the set
\begin{align*}
	\limsup_{k\to \infty} C_k \coloneqq \set{x \in \rr^n \stt \exists\{k_j\}_{j\in\mathbb{N}}, \exists \{x_j\}_{j\in\mathbb{N}} \, \forall j,x_k \in C_k, \{x_k\} \rightarrow x},
\end{align*}
and its inner limit is
\begin{align*}
	\liminf_{k\to \infty} C_k \coloneqq \set{x \in \rr^n \stt \exists x_k \in C_k \colon \{x_k\} \rightarrow x, \forall k\in \mathbb{N}}.
\end{align*}
The limit $C$ of $\{C_k\}_{k\in \mathbb{N}}$ exists if its outer and inner limits coincide, and we write
\begin{align*}
	C = \lim\limits_{k\to\infty} C_k \coloneqq \limsup_{k\to \infty} C_k = \liminf_{k\to \infty} C_k.
\end{align*}

We say that a function $\extended{g}$ is coercive if $\liminf\limits_{\norm{x} \to \infty}g(x) = +\infty$, and supercoercive if $\liminf\limits_{\norm{x} \to \infty}\frac{g(x)}{\norm{x}} = +\infty$. The sequence $\{g_k\}$ of functions $g_k\colon \rr^n \to \bar{\rr}$ is said to epi-converge to the function $g\colon \rr^n \to \bar{\rr}$ if $\lim\limits_{k\to\infty}\epi{g_k} = \epi{g}$; it is said to continuously converge to  $g$ if for all $x\in \rr^n$ and $\{x_k\}\to x$, we have $\lim\limits_{k\to\infty}g_k(x_k) = g(x)$; and it converges pointwise to $g$ if for all $x\in\rr^n$, $\lim\limits_{k\to\infty}g_k(x) = g(x)$. Epi-convergence, continuous convergence, and pointwise convergence of $\{g_k\}$ to $g$ are respectively denoted by $\epilim g_k=g$ (or $g_k\epiarrow g$), $\conlim g_k=g$ (or $g_k\conarrow g$), and $\pointlim g_k=g$ (or $g_k\pointarrow g$).

The conjugate (or Fenchel conjugate, or Legendre transform, or Legendre-Fenchel transform) $\extended{g^\star }$ of a function $\extended{g}$ is the mapping $y \mapsto \sup\limits_{x\in\rr^n}\left\{\left<x, y\right> - g(x)\right\}$, and its biconjugate is $g^{\star \star} = (g^\star )^\star $.

\section{Self-concordant regularization}\label{sec:smoothing}
This section introduces the concept of self-concordant smoothing, which provides structures that can be exploited in composite optimization problems. We begin by presenting the definition of generalized self-concordant functions, as given in \cite{sun2019generalized}.
\begin{definition}[Generalized self-concordant function on $\rr$]\label{def:gsc-r}
	A univariate convex function $g\in \cc{3}(\dom g)$, with $\dom g$ open, is said to be $(M_g,\nu)$-generalized self-concordant, with $M_g\in\rnn$ and $\nu\in \rp$, if
	\begin{align*}
		\abs{\third{g}(t)} \le M_g \hess{g}(t)^\frac{\nu}{2}, \qquad \forall t \in \dom g.
	\end{align*}
\end{definition}
\begin{definition}[Generalized self-concordant function on $\rr^n$ of order $\nu$]\label{def:gsc-rn}
	A convex function $g\in \cc{3}(\dom g)$, with $\dom g$ open, is said to be $(M_g,\nu)$-generalized self-concordant of order $\nu\in \rp$, with $M_g\in\rnn$, if $\forall x \in \dom{g}$
	\begin{align*}
		\abs{\left<\thirdn{g}(x)[v]u,u\right>} \le M_g\norm{u}_x^2\norm{v}_x^{\nu-2}\norm{v}^{3-\nu}, \qquad \forall u,v \in \rr^n,
	\end{align*}
	where $\nabla^3{g}(x)[v] \coloneqq \lim\limits_{t\to 0} \left\{\left(\nabla^2{g}(x + tv)-\nabla^2{g}(x)\right)/{t}\right\}$ is the third directional derivative of $g$.
\end{definition}
Note that for an $(M_g,\nu)$-generalized self-concordant function $g$ defined on $\rr^n$, the univariate function $\varphi\colon\rr\to\rr\cup \{+\infty\}$ defined by $\varphi(t)\coloneqq g(x + tv)$ is $(M_g,\nu)$-generalized self-concordant for every $x,v\in \dom g$ and $x + tv \in \dom g$. This provides an alternative definition for the generalized self-concordant function on $\rr^n$.

A key observation from Definitions \ref{def:gsc-r} and \ref{def:gsc-rn} is the possibility to extend the theory beyond the case $\nu = 3$ and $u=v$ originally presented in \cite{nesterov1994interior}. This observation, for instance, allowed the authors in \cite{bach2010self} to introduce a \emph{pseudo} self-concordant framework, in which $\nu = 2$, for the analysis of logistic regression. In a recent development, the authors in \cite{ostrovskii2021finite} identified a new class of pseudo self-concordant functions and showed how these functions may be slightly modified to make them \emph{standard} self-concordant (i.e., where $M_g=2, \nu=3, u=v$), while preserving desirable structures. With such generalizations, and stemming from the idea of \emph{Newton decrement} \cite{nesterov1994interior}, we propose new step length selection techniques for proximal quasi-Newton methods from the self-concordant regularization framework of this section. We denote by $\gsc{g}$ the class of $(M_g,\nu)$-generalized self-concordant functions, with generalized self-concordant parameters $M_g\in\rnn$ and $\nu\in \rp$.
\begin{definition}[Self-concordant smoothing function]\label{def:smooth-function}
	We say that the parameterized function $g_s\colon \rr^n \times \rp \to \rr$ is a self-concordant smoothing function for $g\in\pclsc{\rr^n}$ if the following two conditions are satisfied:
	\begin{enumerate}[label=\enumlabel{SC}, ref=\enumref{SC}]
		\item $\epilim\limits_{\mu\downarrow 0} \gps(x; \mu) = g(x)$.\label{ass:sc1}
		\item $\gps(x; \mu) \in \gsc{g}$. \label{ass:sc2}
	\end{enumerate}
\end{definition}
We denote by $\scsmooth{g}$ the set of self-concordant smoothing functions for a function $g\in\pclsc{\rr^n}$, that is, $\scsmooth{g}\coloneqq\set{g_s\colon \rr^n \times \rp \to \rr \stt g_s \epiarrow g,~ g_s \in \gsc{g}}$.

\subsection{Self-concordant regularization via infimal convolution}\label{sec:scsmoothing}
Next, we present key elements of smoothing through infimal convolution, which includes the Moreau-Yosida regularization process as a special case in defining the (scaled) proximal operator.
\begin{definition}[Infimal convolution]
	Let $g$ and $h$ be two functions from $\rr^n$ to $\rr \cup \{+\infty\}$. The infimal convolution (or ``inf-convolution'' or ``inf-conv'')\footnote{Also sometimes called ``epigraphic sum'' or ``epi-sum'', as its operation yields the (strict) \emph{epigraphic sum} $\epi g + \epi h$ \cite[p. 93]{hiriart2004fundamentals}.} of $g$ and $h$ is the function $\infconv{g}{h}\colon \rr^n \to \bar{\rr}$ defined by
	\begin{align}
		(\infconv{g}{h})(x) = \inf\limits_{w\in\rr^n}\left\{g(w)+h(x-w)\right\}. \label{eq:infconv}
	\end{align}
\end{definition}
The infimal convolution of $g$ with $h$ is said to be \emph{exact at $x \in\dom g$} if the infimum \eqref{eq:infconv} is attained. It is \emph{exact} if it is exact at each $x \in\dom g$, in which case we write $\infconvdot{g}{h}$. Of utmost importance about the inf-conv operation in this paper is its use in the approximation of a function $g\in\pclsc{\rr^n}$; that is, the approximation of $g$ by its infimal convolution with a member $h_\mu(\cdot)$ of a parameterized family $\calH \coloneqq \{h_\mu \, \stt \, \mu \in \rp\}$ of (regularization) kernels. In more formal terms, we recall the notion of inf-conv regularization in \defnref{def:infconvreg}. For $h\in \pclsc{\rr^n}$ and $\mu \in \rp$, we define the function $\extended{h_\mu}$ by the \emph{epi-multiplication} operation\footnote{It is easy to show that $h_\mu^\star  = \mu h^\star $.}
\begin{align}
	h_\mu(\cdot) \coloneqq \mu h\left(\frac{\cdot}{\mu}\right), \quad \mu \in \rp. \label{eq:hmu}
\end{align}
\begin{definition}[Inf-conv regularization]\label{def:infconvreg}
	Let $g$ be a function in $\pclsc{\rr^n}$. Define
	\begin{align*}
		\calH \coloneqq \left\{(x, w) \mapsto h_\mu(x-w) \stt \mu \in \rp \right\}%, \label{eq:kernelfam}
	\end{align*}
	a parameterized family of regularization kernels. The inf-conv regularization process of $g$ with $h_\mu \in \calH$  is given by $(\infconv{g}{h}_\mu)(x)$, for any $x\in \rr^n$.
\end{definition}
The operation of the inf-conv regularization generalizes the Moreau-Yosida regularization process in which case, $h_\mu(\cdot)=\norm{\cdot}^2/(2\mu)$ or, with a scaled norm, $h_\mu(\cdot)=\norm{\cdot}_Q^2/(2\mu)$ for some $Q\in\spd{n}$. The Moreau-Yosida regularization process provides the value function of the proximal operator associated with a function $g\in \pclsc{\rr^n}$. This leads us to the definition of the scaled proximal operator.
\begin{definition}[Scaled proximal operator]
	The scaled proximal operator of a function $g\in \pclsc{\rr^n}$, written $\prox_{\alpha g}^{Q}(\cdot)$, for $\alpha \in \rp$ and $Q\in \spd{n}$, is defined as the unique point in $\dom g$ that satisfies
	\begin{align*}
		(\infconv{g}{\psi_\alpha})(x) = g(\prox_{\alpha g}^{Q}(x))+\psi_\alpha(x - \prox_{\alpha g}^{Q}(x)),
	\end{align*}
	where $\psi_\alpha(\cdot)\coloneqq\norm{\cdot}_Q^2/(2\alpha)$. That is, $\prox_{\alpha g}^{Q}(x) \coloneqq \argmin_{w\in \rr^n} \{g(w) + \psi_\alpha(x-w)\}$.
\end{definition}
A key property of the scaled proximal operator is its \emph{nonexpansiveness}; that is, the property that (see, \eg, \cite{rockafellar1976monotone} and \cite[Lemma 2]{tran2015composite})
\begin{align}\label{eq:nonexpansive}
	\norm{\prox_{\alpha g}^Q(x) - \prox_{\alpha g}^Q(y)}_Q \le \norm{x-y}_{Q^{-1}},
\end{align}
for all $x,y\in\rr^n$.

In the sequel, we assume that the regularization kernel function $h$ is of the form
\begin{align}
	h(x) = \sum_{i=1}^{n} \phi(x^{(i)}), \label{eq:hseparable}
\end{align}
where $\phi$ is a univariate \emph{potential function}. We are now left with the question of what properties we need to hold for $\phi$ such that $\infconv{g}{h_\mu}$ produces $g_s$ satisfying the self-concordant smoothing conditions \ref{ass:sc1} -- \ref{ass:sc2}. To this end, we assume that $\phi$ satisfies the following:
\begin{enumerate}[label=\enumlabel{K}, ref=\enumref{K}]
	\item $\phi$ is supercoercive. \label{ass:k1}
	\item $\phi \in \gsc{\phi}$. \label{ass:k2}
\end{enumerate}
Many functions that appear in different settings naturally exhibit the structures in conditions \ref{ass:k1} -- \ref{ass:k2}. For example, the ones belonging to the class of \emph{Bregman/Legendre functions} introduced by Bauschke and Borwein \cite{bauschke1997legendre} (see also \cite{de1986relaxed} for a related characterization of the class of \emph{Bregman functions}). In the context of proximal gradient algorithms for solving \eqref{eq:prob}, the recent paper \cite{bauschke2017descent} enlists these functions as satisfying the new descent lemma (a.k.a \emph{descent lemma without Lipschitz gradient continuity}) which the paper introduced. We summarize examples of these regularization kernel functions on different domains in \tablename~\ref{tab:kernel-functions}. We extract practical examples on $\rr$ for the smoothing of the $1$-norm and the indicator functions.
\begin{remark}\label{rem:bounded-below}
	Suppose that $\dom h$ is a nonempty bounded subset of $\rr^n$, for example, if $\phi \in \pclsc{\dom \phi}$, then since we have that $g\in\pclsc{\dom g}$ is bounded below as it possesses a continuous affine minorant (in view of \cite[Theorem 9.20]{bauschke2011convex}), the less restrictive condition that $\phi$ is coercive sufficiently replaces the condition \ref{ass:k1}. In other words, the key convergence notion presented later holds similarly for the resulting function $\infconv{g}{h_\mu}$ in this case. Particularly, we get that $\infconv{g}{h_\mu}$ in this case is exact, finite-valued and locally Lipschitz continuous (see, \eg, \cite[Proposition 3.6]{burke2017epi}) making it fit into our algorithmic framework.
\end{remark}
\begin{remark}\label{rem:supercoercive}
	Whenever the supercoercivity condition is difficult to check (and the condition in \remref{rem:bounded-below} does not hold), two possibilities exist according to \cite[Proposition 3.9]{burke2017epi}: (1) If $h\in\pclsc{\rr^n}$ is such that $\infconv{g}{h_\mu} \epiarrow g$, and $g\in \pclsc{\rr^n}$ is supercoercive, then $h$ is necessarily supercoercive; (2) If, however, $g\in \pclsc{\rr^n}$ is not supercoercive, then we can find some $h\in \pclsc{\rr^n}$ that is not supercoercive but for which $\infconv{g}{h_\mu} \epiarrow g$.
\end{remark}
In light of \remref{rem:bounded-below} and \remref{rem:supercoercive}, our examples in \tablename~\ref{tab:kernel-functions} include both coercive and supercoercive functions. In either case, we have $\phi \in \gsc{\phi}$. We keep the supercoercivity condition to emphasize other realizable properties of $\infconv{g}{h_\mu}$.
\begin{table*}[t!]
	\centering
	\small
	\caption{Examples of regularization kernel functions for self-concordant smoothing, and their generalized self-concordant parameters $M_\phi$ and $\nu$ (see \defnref{def:gsc-r}).}
	\begingroup
	\begin{tabular*}{\textwidth}{@{\extracolsep\fill}ccccc}
		\toprule
		$\phi(t)$ & $\dom{\phi}$ & $M_{\phi}$ & $\nu$ & Remark \\
		\midrule
		$\frac{1}{p}\sqrt{1+p^2\abs{t}^2}-1$, $p\in\rp$ & $\rr$ & $2$ & $2.6$ & $p=1$ \\
		$\frac{1}{2}\left[\sqrt{1+4t^2}-1+\log\left(\frac{\sqrt{1+4t^2}-1}{2t^2}\right)\right]$ & $\rr$ & $2\sqrt{2}$ & $3$ & Ostrovskii \& Bach \cite{ostrovskii2021finite} \\
		$\frac{1}{2}t^2$ & $\rr$ & $0$ & $3$ & ``Energy'' \\
		$\frac{1}{p}\abs{t}^p$, $p\in (1,2)$ & $\rnn$ & $4$ & $6$ & $p=1.5$ \\
		$\log(1 + \exp(t))$ & $\rr$ & $1$ & $2$ & ``Logistic'' \\
		$t\log t - t$ & $[0,+\infty]$ & $1$ & $4$ & ``Boltzmann-Shannon'' \\
		$\begin{cases}
			\frac{1}{2}(t^2 - 4t + 3), \quad \text{if} \,\,\, t\le 1\\
			-\log t, \quad \text{otherwise}
		\end{cases}$ & $\rr$ & $4$ & $3$ & De Pierro \& Iusem \cite{de1986relaxed} \\
		\bottomrule
	\end{tabular*}
	\label{tab:kernel-functions}
	\endgroup
\end{table*}
\paragraph*{Examples.}
For some functions $g$ and $h_\mu$, there exists a closed form solution to $\infconv{g}{h_\mu}
$. On the other hand, if one gets that $\infconv{g}{h_\mu}=\infconvdot{g}{h_\mu}\in \pclsc{\rr^n}$, \eg, as a result of \propref{thm:exact}, then knowing in this case that
\begin{align}
	\infconv{g}{h_\mu} = (g^\star  + h_\mu^\star )^\star ,\label{eq:inf-conv-dual}
\end{align}
we can efficiently estimate $\infconv{g}{h_\mu}$ using \emph{fast} numerical schemes (see, \eg, \cite{lucet1997faster}). The structure of $h$ implies $\gps$ can be expressed in terms of a corresponding univariate function $\varphi\colon\rr\to\rr\cup \{+\infty\}$ by defining $\varphi_s(t;\mu) \coloneqq (\infconv{\varphi}{h_\mu})(t)$, and then
\begin{align*}
	\gps(x;\mu) = \sum_{i=1}^{n} \varphi_s(x^{(i)};\mu).
\end{align*}
In the following, we provide examples of such $\varphi_s$ for some $\phi \in \gsc{\phi}$.
\paragraph*{Infimal convolution of $\|\cdot\|_1$ with $h_\mu$.} In the first two examples, we consider $g(x)= \|x\|_1$.
\begin{example}\label{ex:infconv1}
	Let $p=1$ in $\phi(t) = \frac{1}{p}\sqrt{1+p^2\abs{t}^2}-1$, with $\dom \phi = \rr$. Then,
	\begin{align*}
		\varphi_s(t;\mu) = \frac{\mu^{2}-\mu  \sqrt{\mu^{2}+t^{2}}+t^{2}}{\sqrt{\mu^{2}+t^{2}}}.
	\end{align*}
\end{example}
\begin{figure*}[t!]
	\centering
	\subfloat{%
		\resizebox*{7cm}{!}{\includegraphics{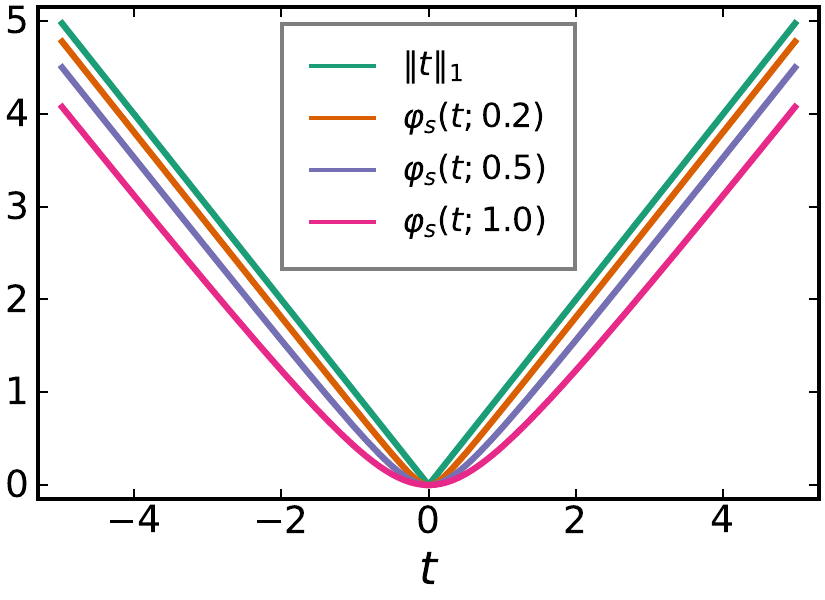}}}\hspace{5pt}
	\subfloat{%
		\resizebox*{7cm}{!}{\includegraphics{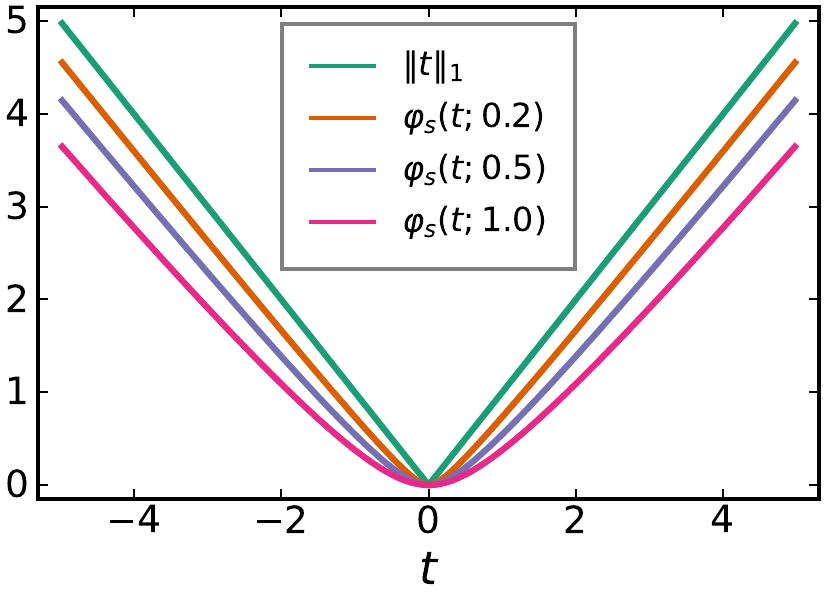}}}
	\caption{Generalized self-concordant smoothing of $\|\cdot\|_1$ with $\phi(t) = \sqrt{1+\abs{t}^2}-1$ (left) and $\phi(t) = \frac{1}{2}\left[\sqrt{1+4t^2}-1+\log\left(\frac{\sqrt{1+4t^2}-1}{2t^2}\right)\right]$ (right). The smooth approximation is shown for $\mu=0.2,0.5,1.0$.} \label{fig:g-smooth}
\end{figure*}
\begin{example}\label{ex:infconv2}
	$\phi(t) = \frac{1}{2}\left[\sqrt{1+4t^2}-1+\log\left(\frac{\sqrt{1+4t^2}-1}{2t^2}\right)\right]$, with $\dom \phi = \rr$:
	\begin{align*}
		\varphi_s(t;\mu) =& \frac{\sqrt{\mu^{2}+4 t^{2}}}{2} -\frac{\mu}{2}\left[1+\log (2) - \log \left(\frac{2t -\sqrt{\mu^{2}+4 t^{2}}+\mu}{t}\right) -\log \left(\frac{2t + \sqrt{\mu^{2}+4 t^{2}}-\mu}{t}\right)\right].
	\end{align*}
\end{example}
\paragraph*{Infimal convolution of $\delta_C(x)$ with $h_\mu$.} In the next example, we consider $g(x)= \delta_C(x)$, where $C \coloneqq \{x\in\rr^n \stt l\le x \le u\}$ and
\begin{align*}
	\delta_C(x) \coloneqq \begin{cases}
		0, \quad &\text{if } x\in C,\\
		+\infty, \quad &\text{otherwise}.
	\end{cases}
\end{align*}

\begin{example}\label{ex:infconv3}
	Let $g(x) = \delta_C(x)$, and consider
	\begin{align*}
		\phi(t) = \begin{cases}
			\frac{1}{2}(t^2 - 4t + 3), \quad &\text{if} \,\,\, t\le 1\\
			-\log t, \quad &\text{otherwise},
		\end{cases}
	\end{align*}
	with $\dom \phi = \rr$. We have
	\begin{align*}
		\varphi_s(t;\mu) = \begin{cases}
			\frac{1}{2\mu}\left(l - t +3\mu \right)\left(l - t + \mu \right), \quad &\text{if}\,\,\, l \ge t-\mu\\
			\mu\log(\mu)-\mu\log(t - l), \quad &\text{otherwise}.
		\end{cases}
	\end{align*}
\end{example}

The next two results characterize the functions $h$ and $h_\mu$ defined by supercoercive and generalized self-concordant kernel functions.
\begin{lemma}\label{thm:hsc}
	Let $\phi \in \pclsc{\rr}$ be a function from $\rr$ to $\rr \cup \{+\infty\}$, and let the function $\extended{h}$ be defined by $h(x) \coloneqq \sum_{i=1}^{n} \lambda_i\phi(x^{(i)})$ with $x^{(i)}\in \dom{\phi}$, $\lambda_i \in \rp$, $i=1,2,\ldots,n$. Then the following properties hold:
	\begin{enumerate}
		\item $h \in \pclsc{\rr^n}$.
		\item $h$ is supercoercive if and only if $\phi$ is supercoercive on its domain.
		\item If $\phi \in \gsc{\phi}$, where $M_{\phi} \in \rnn$ and $\nu \ge 2$, then $h(x)$ is well-defined on $\dom{h} = \{\dom{\phi}\}^n$, and $h(x) \in \gsc{h}$, with $M_h \coloneqq \max\{\lambda_i^{1- \frac{\nu}{2}}M_{\phi} \stt 1\le i \le n\} \in \rnn$.\label{item:hsciii}
	\end{enumerate}
\end{lemma}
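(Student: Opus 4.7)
My plan is to prove the three claims in sequence, with the generalized self-concordance in (iii) demanding the most work.

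For (i), I would show by separability that $h$ inherits each defining property of $\pclsc{\rr^n}$ from $\phi$. Each summand $\lambda_i \phi(x_i) = \lambda_i(\phi \circ \pi_i)(x)$ is the positive rescaling of the composition of a proper convex lsc function with the continuous linear coordinate projection $\pi_i\colon x\mapsto x_i$, hence inherits the three properties; finite positive combinations preserve them. Properness follows in particular from $\dom h = (\dom \phi)^n \ne \emptyset$.

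For (ii), I would prove both implications directly. For the forward direction, given $K>0$, I use supercoercivity of $\phi$ to pick $R > 0$ with $\phi(t) \ge K|t|$ for $|t|\ge R$, and bound $\phi$ uniformly below on $[-R,R]$ using any affine minorant of $\phi$. Splitting the index set of $x\in\rr^n$ as $I = \{i: |x_i|\ge R\}$ and combining these bounds with $\sum_{i\in I}|x_i| \ge \norm{x}-nR$ gives $\liminf_{\norm{x}\to\infty} h(x)/\norm{x} \ge K\min_i \lambda_i$; since $K$ was arbitrary, $h$ is supercoercive. For the converse, I restrict $h$ to a coordinate slice $x = (t, t_0, \ldots, t_0)$ for any fixed $t_0\in\dom\phi$, so that $\norm{x}\sim |t|$ as $|t|\to\infty$ and supercoercivity of $\phi$ is read off from that of $h$ along the slice.

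For (iii), well-definedness is clear since each $\lambda_i \phi(x_i)$ is finite on $(\dom\phi)^n$. For the self-concordance property, I first exploit separability to compute $\hessn h(x) = \diag(\lambda_i \phi''(x_i))$ and $\thirdn h(x)[v] = \diag(\lambda_i\phi'''(x_i) v_i)$, which give
\[
\langle \thirdn h(x)[v]u,u \rangle = \sum_{i=1}^n \lambda_i \phi'''(x_i) v_i u_i^2.
\]
Applying $|\phi'''(x_i)|\le M_{\phi}\phi''(x_i)^{\nu/2}$ term-by-term and rewriting $\lambda_i \phi''(x_i)^{\nu/2} = \lambda_i^{1-\nu/2}(\lambda_i\phi''(x_i))^{\nu/2}$ (which produces the constant $M_h = \max_i \lambda_i^{1-\nu/2}M_{\phi}$) reduces the task, with $w_i := \lambda_i\phi''(x_i)\ge 0$, to the combinatorial inequality
\[
\sum_i w_i^{\nu/2}|v_i|u_i^2 \le \Bigl(\sum_i w_i u_i^2\Bigr)\Bigl(\sum_i w_i v_i^2\Bigr)^{(\nu-2)/2}\Bigl(\sum_i v_i^2\Bigr)^{(3-\nu)/2}.
\]
The crux is the factorization $w_i^{\nu/2}|v_i|u_i^2 = (w_i u_i^2)\cdot (w_i v_i^2)^{(\nu-2)/2}\cdot |v_i|^{3-\nu}$ followed by a three-factor H\"older inequality with conjugate exponents $\bigl(2,\, 2/(\nu-2),\, 2/(3-\nu)\bigr)$ (whose reciprocals sum to one), and the dominance $\bigl(\sum_i (w_i u_i^2)^2\bigr)^{1/2}\le \sum_i w_i u_i^2 = \norm{u}_x^2$ applied to the first factor, together producing exactly $\norm{u}_x^2\norm{v}_x^{\nu-2}\norm{v}^{3-\nu}$. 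I expect the main obstacle to lie in the regime $\nu > 3$, where the third H\"older exponent turns negative and the displayed factorization must be reshaped (for instance by restricting to $\{i : v_i\ne 0\}$ or by a limiting argument); for $\nu\in[2,3]$ all three exponents are admissible and the chain of inequalities goes through cleanly.
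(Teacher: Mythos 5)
Your treatment of (i) and (ii) is correct and simply makes explicit what the paper disposes of by citation (to Bauschke--Combettes results for (i)) and by ``follows from the definition'' (for (ii)); the index-splitting argument with an affine minorant of $\phi$ in the forward direction of (ii) is exactly the right way to fill in that one-liner, and the coordinate-slice argument gives the converse. For (iii) you take a genuinely different route: the paper's entire proof is a citation to Proposition~1 of \cite{sun2019generalized} on positive combinations of generalized self-concordant functions, whereas you exploit separability directly and reduce the claim to a weighted three-factor H\"older inequality. For $\nu\in[2,3]$ your chain of inequalities is complete and correct (including the endpoints, where one factor degenerates and H\"older collapses to Cauchy--Schwarz), and it has the virtue of being self-contained. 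A slightly shorter variant of the same computation, closer in spirit to the cited proposition, bounds each factor termwise via $(w_iv_i^2)^{(\nu-2)/2}\le\norm{v}_x^{\nu-2}$ and $\abs{v_i}^{3-\nu}\le\norm{v}^{3-\nu}$ and then sums $w_iu_i^2$; this avoids H\"older altogether, but it too uses $3-\nu\ge0$.

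Your worry about $\nu>3$ is not a cosmetic obstacle: the inequality with the stated dimension-free constant actually fails there. Take $\phi(t)=t\log t-t$ (Boltzmann--Shannon, $M_\phi=1$, $\nu=4$), $n=2$, $\lambda_1=\lambda_2=1$, $u=v=(1,1)$ and $x=(1,t)$: as $t\to\infty$ the left-hand side $\abs{\phi'''(1)+\phi'''(t)}$ tends to $1$ while $M_h\norm{u}_x^2\norm{v}_x^{2}\norm{v}^{-1}$ tends to $M_h/\sqrt2$, forcing $M_h\ge\sqrt2>1=\max_i\lambda_i^{1-\nu/2}M_\phi$; with $n$ coordinates the same construction forces $M_h\ge\sqrt n$. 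The paper's citation does not rescue this case, because Proposition~1 of \cite{sun2019generalized} requires each summand to be generalized self-concordant \emph{as a function on $\rr^n$}, and for $\nu>3$ the coordinate section $x\mapsto\phi(x_i)$ is not: its local norm sees only $v_i$, while the factor $\norm{v}^{3-\nu}$ can be made arbitrarily small relative to $\abs{v_i}^{3-\nu}$. So you should regard your proof as establishing item (iii) for $2\le\nu\le3$ --- which is the regime the paper's convergence analysis actually restricts to --- and note that for $\nu>3$ the claim as stated needs either a dimension-dependent constant or a restriction on $\nu$.
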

\begin{proof}
	\begin{enumerate}
		\item This statement is a direct consequence of \cite[Corollary 9.4, Lemma 1.27 and Proposition 8.17]{bauschke2011convex}.
		\item Follows directly from the definition of supercoercivity.
		\item $h(\cdot) \in \gsc{h}$ with $M_h \coloneqq \max\{\lambda_i^{1- \frac{\nu}{2}}M_{\phi} \stt 1\le i \le n\} \in \rnn$ follows from \cite[Proposition 1]{sun2019generalized}.
	\end{enumerate}
\end{proof}

\begin{proposition}[Self-concordance of $h_\mu$]\label{thm:hmugsc}
	Suppose the conditions of \lemref{thm:hsc} hold such that the function $\extended{h}$ defined by \eqref{eq:hseparable} is $(M_h,\nu)$-generalized self-concordant. Let $A\in \rr^{n\times n}$ be a diagonal matrix defined by $A\coloneqq \diag(\frac{1}{\mu})$ such that $h(\frac{x}{\mu})\equiv h(Ax)$ is an affine transformation of $h(x)$. Then the following properties hold:
	\begin{enumerate}
		\item If $\nu \in (0,3]$, then $h_\mu\in\gsc{}$ with $M=n^{\frac{3-\nu}{2}}\mu^{\frac{\nu}{2}-2}M_h$.
		\item If $\nu >3$, then $h_\mu\in\gsc{}$ with $M=\mu^{4-\frac{3\nu}{2}}M_h$.
	\end{enumerate}
\end{proposition}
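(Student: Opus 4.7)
The plan is to transfer the generalized self-concordance from $h$ to $h_\mu$ by viewing $h_\mu(x)=\mu h(Ax)$, with $A=\diag(1/\mu)$, as a positive scaling of an affine reparameterization of $h$. First, I would apply the chain rule to obtain $\nabla^2 h_\mu(x)=\mu A^\top \nabla^2 h(y)A$ and $\langle\nabla^3 h_\mu(x)[v]u,u\rangle=\mu\langle\nabla^3 h(y)[Av](Au),Au\rangle$, where $y=Ax$. Then the $(M_h,\nu)$-generalized self-concordance of $h$ applied at $y$ with test vectors $Au,Av$ gives
\begin{align*}
|\langle\nabla^3 h_\mu(x)[v]u,u\rangle|\le \mu M_h\,\|Au\|_y^2\,\|Av\|_y^{\nu-2}\,\|Av\|^{3-\nu}.
\end{align*}

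Next, I would convert each of the $h$-quantities on the right-hand side into $h_\mu$-quantities at $x$: the chain-rule identity $A^\top\nabla^2 h(y)A=\mu^{-1}\nabla^2 h_\mu(x)$ gives $\|Au\|_y=\mu^{-1/2}\|u\|_{h_\mu,x}$ and likewise for $v$, so the two local-norm terms contribute pure powers of $\mu$. The delicate factor is the Euclidean term $\|Av\|^{3-\nu}$, which must be re-expressed in terms of $\|v\|^{3-\nu}$ as demanded by \defnref{def:gsc-rn}. Here the sign of $3-\nu$ forces a case split: for $\nu\in(0,3]$ I would use an upper bound $\|Av\|\le\|A\|_F\|v\|=(\sqrt n/\mu)\|v\|$, which yields the extra factor $n^{(3-\nu)/2}$ of case (i); for $\nu>3$, invertibility of $A$ (granted by $\mu>0$) supplies the lower bound $\|Av\|\ge\sigma_{\min}(A)\|v\|=(1/\mu)\|v\|$, as needed to bound $\|Av\|^{3-\nu}$ from above when the exponent is negative. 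Collecting all powers of $\mu$ — the outer chain-rule factor, the local-norm conversions $\mu^{-1}$ and $\mu^{-(\nu-2)/2}$, and the Euclidean-norm bound in each case — then reads off the stated constants.

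The main technical obstacle is precisely the asymmetric role played by the Euclidean term $\|v\|^{3-\nu}$: unlike the local norms, $\|v\|$ does not inherit a natural $\mu$-scaling from the chain rule, and the sign of $3-\nu$ flipping across $\nu=3$ forces the inequality used to flip direction as well (hence the implicit invertibility requirement in case (ii)). A useful sanity check is the boundary $\nu=3$, where the $n$-factor in (i) disappears and the two formulas must collapse to the same exponent of $\mu$. One can also cross-validate via the separable representation $h_\mu(x)=\sum_i\lambda_i\tilde\phi(x_i)$, $\tilde\phi(t)=\mu\phi(t/\mu)$, by first establishing that $\tilde\phi$ is $(M_\phi\mu^{\nu/2-2},\nu)$-generalized self-concordant in one variable and then appealing to \lemref{thm:hsc}(\ref{item:hsciii}).
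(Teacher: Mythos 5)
Your route is in substance the same as the paper's: the paper delegates the two steps to \cite{sun2019generalized} (Proposition~2 for the affine substitution $x\mapsto Ax$, Proposition~1 for the positive scaling by $\mu$), and you simply unfold both by hand via the chain rule. Case (i) of your argument is sound and lands on the stated constant: the deliberate loosening $\norm{Av}\le(\sqrt n/\mu)\norm{v}$ is exactly the paper's use of $\norm{A}=\sqrt n/\mu$, and the exponents $1-1+(1-\tfrac{\nu}{2})+(\nu-3)=\tfrac{\nu}{2}-2$ together with the factor $n^{(3-\nu)/2}$ reproduce item (i).

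The gap is in case (ii), where your bookkeeping does not in fact ``read off the stated constant.'' Collect your four powers of $\mu$: the outer chain-rule factor $\mu^{1}$, the local-norm conversions $\mu^{-1}$ and $\mu^{-(\nu-2)/2}$, and $\norm{Av}^{3-\nu}\le(\norm{v}/\mu)^{3-\nu}=\mu^{\nu-3}\norm{v}^{3-\nu}$. The total exponent is $1-1+(1-\tfrac{\nu}{2})+(\nu-3)=\tfrac{\nu}{2}-2$, i.e.\ you obtain $M=\mu^{\nu/2-2}M_h$, whereas item (ii) asserts $M=\mu^{4-3\nu/2}M_h$; the two exponents agree only at $\nu=3$. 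Since $(M,\nu)$-generalized self-concordance is monotone in $M$, your bound implies the stated one only when $\mu^{\nu/2-2}\le\mu^{4-3\nu/2}$, which for $\nu>3$ requires $\mu\le 1$; so as a proof of (ii) as written your argument is incomplete. Note also that because $A=\mu^{-1}I_n$, one has $\norm{Av}=\norm{v}/\mu$ \emph{exactly}, so no case split (and no $n$-factor) is actually needed: the computation gives the uniform constant $\mu^{\nu/2-2}M_h$ for all $\nu$, which is precisely what your own cross-check via $\tilde\phi(t)=\mu\phi(t/\mu)$ predicts. This strongly suggests the exponent $4-3\nu/2$ traces back to the paper's proof identifying the eigenvalue of $A^\top A=\diag(1/\mu^2)$ as $\mu^2$ rather than $\mu^{-2}$ before invoking Proposition~2(b) of \cite{sun2019generalized}. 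You should either state that you prove the (uniform) constant $\mu^{\nu/2-2}M_h$ and flag the discrepancy with item (ii), or restrict the claim to $\mu\le1$ where your bound implies the stated one; as written, the final sentence of your argument asserts something your exponent arithmetic does not deliver.
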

\begin{proof}
	\begin{enumerate}
		\item We have $\norm{A} = \frac{\sqrt{n}}{\mu}$. By \cite[Proposition 2(a)]{sun2019generalized}, $h(\frac{x}{\mu}) \in \gsc{}$ with $M= \norm{A}^{3-\nu}M_h$. In view of \lemref{item:hsciii}, the scaling $h(\frac{\cdot}{\mu})\mapsto \mu h(\frac{\cdot}{\mu})$ gives $M\mapsto \mu^{1-\frac{\nu}{2}}M$. The result follows.
		\item The value $\mu^2\in \rp$ corresponds to the unique eigenvalue of $A^\top A$. By \cite[Proposition 2(b)]{sun2019generalized}, $h(\frac{x}{\mu}) \in \gsc{}$ with $M= \mu^{3-\nu}M_h$. The result follows as in Item (i) above.
	\end{enumerate}
\end{proof}

The next result concerns the epi-convergence of smoothing via infimal convolution under the condition of supercoercive regularization kernels in $\pclsc{\rr^n}$.
\begin{lemma}{\cite[Theorem 3.8]{burke2017epi}}\label{thm:epiconvergence}
	Let $g, h \in \pclsc{\rr^n}$ with $h$ supercoercive and $0\in\dom{h}$. Let $h_\mu$ be defined as in \eqref{eq:hmu}. Then the following hold:
	\begin{enumerate}
		\item $\epilim\limits_{\mu\downarrow 0}\{g^\star  + \mu h^\star \} = g^\star $.
		\item $\epilim\limits_{\mu\downarrow 0}\{\infconv{g}{h_\mu}\} = g$. \label{thm:lemepi}
		\item  If $h(0)\le 0$, we have $\pointlim\limits_{\mu\downarrow 0}\{\infconv{g}{h_\mu}\} = g$.
	\end{enumerate}
\end{lemma}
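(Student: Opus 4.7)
The plan is to prove (i) and (ii) by a direct calculation exploiting that supercoercivity of $h$ forces $h^*$ to be finite (hence continuous) on all of $\rr^n$, deduce (iii) by passing to Fenchel conjugates via the inf-conv/sum duality identity, and finally obtain (iv) by a squeeze argument combining (iii) with a pointwise upper bound provided by $h(0)\le 0$.

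For (i) and (ii), I would fix $y\in\rr^n$ and exploit the standard dual characterization: $h$ supercoercive is equivalent to $\dom h^* = \rr^n$; in particular $h^*$ is continuous at $y$. For the epi-liminf inequality in (i), take any $y_\mu\to y$: continuity of $h^*$ keeps $h^*(y_\mu)$ bounded as $\mu\downarrow 0$, so $\mu h^*(y_\mu)\to 0$, and lower semicontinuity of $g^*$ gives
\begin{align*}
	\liminf_{\mu\downarrow 0}\bigl(g^*(y_\mu)+\mu h^*(y_\mu)\bigr)\ge g^*(y).
\end{align*}
For the matching epi-limsup bound needed in (ii), use the constant recovery sequence $y_\mu\equiv y$: then $g^*(y)+\mu h^*(y)\to g^*(y)$ since $h^*(y)\in\rr$. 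The two one-sided bounds together yield (ii), and (i) is obtained along the way.

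For (iii), the plan is to pass to Fenchel conjugates. The classical fact that conjugation is continuous with respect to epi-convergence on $\pclsc{\rr^n}$, together with the identity $h_\mu^*=\mu h^*$ (footnoted after \eqref{eq:hmu}) and the inf-conv duality $(\infconv{g}{h_\mu})^* = g^*+h_\mu^* = g^*+\mu h^*$, rewrites (ii) as $(\infconv{g}{h_\mu})^*\epiarrow g^*$. Conjugating once more and using $g^{**}=g$, the claim (iii) reduces to the biconjugate identification $(g^*+\mu h^*)^* = \infconv{g}{h_\mu}$. The main obstacle I expect here is precisely this last equality: one must certify that $\infconv{g}{h_\mu}$ is not only convex but proper and lsc so that its biconjugate returns the original function. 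This is where supercoercivity of $h$ is used a second time, beyond its role in ensuring $\dom h^*=\rr^n$, since it guarantees exactness of the infimal convolution and lower semicontinuity of the result.

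For (iv), the extra hypothesis $h(0)\le 0$ supplies a pointwise upper bound by plugging $w=x$ into the definition \eqref{eq:infconv}:
\begin{align*}
	(\infconv{g}{h_\mu})(x)\le g(x)+h_\mu(0) = g(x)+\mu h(0)\le g(x),
\end{align*}
so $\limsup_{\mu\downarrow 0}(\infconv{g}{h_\mu})(x)\le g(x)$. The matching liminf bound is immediate from (iii): evaluating the epi-liminf on the constant sequence $x_\mu\equiv x$ gives $g(x)\le \liminf_{\mu\downarrow 0}(\infconv{g}{h_\mu})(x)$. The two inequalities squeeze to pointwise convergence, proving (iv).
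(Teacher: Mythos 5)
Your proof is correct. The paper offers no proof of this lemma---it is quoted directly from Burke and Hoheisel \cite[Theorem 3.8]{burke2017epi}---and your route is essentially the argument of that reference: the equivalence of supercoercivity of $h$ with $\dom h^* = \rr^n$ (hence continuity of $h^*$) for (i)--(ii), the Wijsman/Attouch continuity of Fenchel conjugation under epi-convergence on $\pclsc{\rr^n}$ together with exactness and lower semicontinuity of $\infconv{g}{h_\mu}$ (so that the biconjugate returns $\infconv{g}{h_\mu}$ itself) for (iii), and the squeeze between the epi-liminf bound and the upper bound $(\infconv{g}{h_\mu})(x) \le g(x) + \mu h(0) \le g(x)$ for (iv).
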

The main argument for the notion of epi-convergence in optimization problems is that when working with functions that may take infinite values, it is necessary to extend traditional convergence notions by applying the theory of \emph{set convergence} to epigraphs in order to adequately capture local properties of the function (through a resulting calculus of smoothing functions), which on the other hand may be challenging due to the \emph{curse of differentiation} associated with nonsmoothness. We refer the interested reader to \cite[Chapter 7]{rockafellar2009variational} for further details on the notion of epi-convergence, and to \cite{stromberg1994study,burke2013epi,burke2017epi} for extended results on epi-convergent smoothing via infimal convolution.

In the following, we highlight key properties of the infimal convolution of $g\in \pclsc{\rr^n}$ with $h_\mu$ satisfying $h\in\gsc{h}$.
\begin{proposition}\label{thm:g-properties}
	Let $g,h \in \pclsc{\rr^n}$. Suppose further that $h$ is $(M_h,\nu)$-generalized self-concordant and supercoercive, and define $g_s \coloneqq \infconv{g}{h_\mu}$ for all $\mu \in \rp$. Then the following hold:
	\begin{enumerate}
		\item $\infconv{g}{h_\mu}=\infconvdot{g}{h_\mu}\in \pclsc{\rr^n}$. \label{thm:exact}
		\item\label{thm:gs-gsc} $g_s \in \scsmooth{g}$ with
		\begin{align*}
			M_g =
			\begin{cases}
				n^{\frac{3-\nu}{2}}\mu^{\frac{\nu}{2}-2}M_h, \qquad &\text{if } \nu \in (0,3],\\
				\mu^{4-\frac{3\nu}{2}}M_h, \qquad &\text{if } \nu >3.
			\end{cases}
		\end{align*}
		\item $g_s$ is locally Lipschitz continuous.\label{thm:g-lip}
	\end{enumerate}
\end{proposition}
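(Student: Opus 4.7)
The plan is to handle (i) and (iii) by standard infimal-convolution and convex-analysis results, and to reduce (ii) to the self-concordance of $h_\mu$ already computed in \propref{thm:hmugsc}. For (i), since $h$ is supercoercive and $h\in\pclsc{\rr^n}$, one checks directly that $h_\mu$ inherits supercoercivity (epi-multiplication by $\mu\in\rp$ preserves it) and membership in $\pclsc{\rr^n}$. Combined with $g\in\pclsc{\rr^n}$, which admits a continuous affine minorant by \cite[Thm.~9.20]{bauschke2011convex}, standard inf-convolution results of the type used in \cite[Prop.~3.6]{burke2017epi} then yield that $\infconv{g}{h_\mu}$ is exact, proper, convex, and lsc, so $\infconv{g}{h_\mu}=\infconvdot{g}{h_\mu}\in\pclsc{\rr^n}$. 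For (iii), exactness together with supercoercivity of $h_\mu$ ensures $g_s$ is finite-valued on all of $\rr^n$; local Lipschitz continuity then follows from the standard fact that a proper convex function finite-valued on an open set is locally Lipschitz on its interior.

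\textbf{Self-concordance of $g_s$.} For (ii), condition \ref{ass:sc1} is immediate from \lemref{thm:epiconvergence}, and condition \ref{ass:sc2} is where the substance lies. The constants $M_g$ in the statement coincide exactly with those already derived for $h_\mu$ in \propref{thm:hmugsc}, so the task reduces to a transfer of the self-concordance inequality of \defnref{def:gsc-rn} from $h_\mu$ to $g_s$ with identical constants. I would proceed via the dual route: by (i) and Fenchel--Moreau, $g_s^*=g^*+h_\mu^*$, and since $\hessn g^*\succeq 0$ while $h_\mu^*=\mu h^*$ inherits smoothness from self-concordance of $h_\mu$, the identity $\hessn g_s(x)=[\hessn g_s^*(\gradn g_s(x))]^{-1}$ gives $\hessn g_s(x)\preceq \hessn h_\mu(x-w^*(x))$, where $w^*(x)$ is the unique minimizer in \eqref{eq:infconv} (uniqueness itself coming from strict convexity of $h_\mu$). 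Differentiating once more and using the envelope identity $\gradn g_s(x)=\gradn h_\mu(x-w^*(x))$ (Danskin/envelope theorem for exact inf-convolution), one bounds $\thirdn g_s(x)$ pointwise in terms of $\thirdn h_\mu$ evaluated at $x-w^*(x)$, from which the self-concordance inequality transfers from $h_\mu$ to $g_s$ with the same $(M_g,\nu)$.

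\textbf{Main obstacle.} The hardest step is the third-order transfer, because the conjugacy identities for $\hessn g_s$ and $\thirdn g_s$ implicitly assume that $g^*$ (and hence $g$) is smooth enough for the derivatives to exist classically. When $g$ is, e.g., polyhedral, these identities hold only on a dense open subset of $\rr^n$, and one must also track that the monotonicity $\hessn g_s^*\succeq \hessn h_\mu^*$ yields the correct direction of inequality in \emph{both} the $\norm{\cdot}_x$-weighted and the Euclidean terms of \defnref{def:gsc-rn}. A clean workaround is to replace $g$ with its Moreau envelope $g^{(\eps)}\in\cc{1,1}$, prove the desired inequality for $g^{(\eps)}\square h_\mu$ where every derivative is classical, and pass $\eps\downarrow 0$: this is admissible because the bounding constant $M_g$ is independent of $\eps$, and because $g^{(\eps)}\epiarrow g$ is a standard Moreau-envelope fact which by \lemref{thm:epiconvergence} propagates through the inner inf-convolution, giving continuity of the third-order tensor on compact subsets and hence preservation of the pointwise inequality in the limit.
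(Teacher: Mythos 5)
Your treatment of parts (i) and (iii) is sound and essentially matches the paper's: exactness of the inf-convolution from supercoercivity of $h_\mu$ together with the continuous affine minorant of $g$, then finite-valuedness plus convexity for local Lipschitz continuity. For part (ii), however, you take a genuinely different route from the paper --- conjugate duality plus a Moreau-envelope regularization of $g$ --- and that route contains a real unresolved gap, which you flag yourself but do not close. The paper's proof proceeds by the first-order envelope identity $\gradn g_s(x)=\gradn h_\mu(x-w_\mu(x))$ and then asserts, via \cite[Proposition 18.7/Corollary 18.8]{bauschke2011convex} with derivatives ``defined inductively,'' that the second- and third-order derivatives of $g_s$ at $x$ \emph{equal} those of $h_\mu$ at $x-w_\mu(x)$, so that the generalized self-concordance inequality transfers verbatim with the same constants. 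Your argument, more cautiously, only yields the one-sided bound $\hessn g_s(x)\preceq\hessn h_\mu(x-w^*(x))$, and this is precisely where your transfer breaks down.

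Concretely, two steps are missing. First, differentiating $\gradn g_s(x)=\gradn h_\mu(x-w^*(x))$ twice produces, besides the term $\thirdn h_\mu(x-w^*(x))$ composed with $I-Dw^*(x)$, an additional term of the form $\hessn h_\mu(x-w^*(x))\,D^2w^*(x)[v]$ involving the curvature of the argmin map $w^*$; your sketch never controls this term, and it does not vanish in general. Second, even granting a bound $\abs{\left<\thirdn g_s(x)[v]u,u\right>}\le M_g\norm{u}_{\hessn h_\mu}^{2}\norm{v}_{\hessn h_\mu}^{\nu-2}\norm{v}^{3-\nu}$ with the weighted norms induced by $\hessn h_\mu(x-w^*(x))$, Definition~\ref{def:gsc-rn} requires these norms to be the ones induced by $\hessn g_s(x)$. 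For $\nu\ge 2$ they enter the right-hand side with nonnegative exponents, so one would need $\norm{u}_{\hessn h_\mu}\le\norm{u}_{\hessn g_s}$, whereas $\hessn g_s(x)\preceq\hessn h_\mu(x-w^*(x))$ gives exactly the reverse; the inequality therefore does not convert into the required one with the same constant $M_g$. Your Moreau-envelope workaround with $\eps\downarrow 0$ only repairs the smoothness of $g$ so that classical derivatives exist; it addresses neither the uncontrolled $D^2w^*$ term nor the direction of the norm comparison, so the limit passage would be propagating an inequality that has not been established at any fixed $\eps$.
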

\begin{proof}
	First, as an immediate consequence of \cite[Lemma 1.28, Lemma 1.27 and Proposition 8.17]{bauschke2011convex}, we have $h_\mu\in\pclsc{\rr^n}$.
	\begin{enumerate}
		\item Follows immediately from \cite[Proposition 12.14]{bauschke2011convex}.
		\item By Item \ref{thm:exact}, $g_s=\infconvdot{g}{h_\mu}\in \pclsc{\rr^n}$. As a consequence of \cite[Proposition 12.14]{bauschke2011convex}, we have
		\begin{align*}
			g_s(x,\mu) = \min\limits_{w\in\rr^n}\left\{g(w)+h_\mu(x-w)\right\},
		\end{align*}
		and $g_s \epiarrow g$ (by \cite[Theorem 11.34]{rockafellar2009variational}).
		In view of \cite[Proposition 7.2]{rockafellar2009variational}, for $x\in \dom g$ and
		\begin{align*}
			w_\mu(x) \in \argmin_{w\in \rr^n}\set{g(w) + h_\mu(x-w)} \ne \emptyset,
		\end{align*}
		$g_s \epiarrow g$ implies that $g_s(x,\mu) \to g(x)$ for at least one sequence $w_\mu(x) \to x$. Hence, we have
		\begin{align*}
			(\infconv{g}{h_\mu})(x) = g(w_\mu(x)) + h_\mu(x - w_\mu(x)).
		\end{align*}
		And, given $h\in \gsc{h}$, we have by \propref{thm:hmugsc} that $h_\mu$ is $(M_g,\nu)$-generalized self-concordant, where $M_g$ is given by
		\begin{align*}
			M_g =
			\begin{cases}
				n^{\frac{3-\nu}{2}}\mu^{\frac{\nu}{2}-2}M_h, \qquad &\text{if } \nu \in (0,3],\\
				\mu^{4-\frac{3\nu}{2}}M_h, \qquad &\text{if } \nu >3.
			\end{cases}
		\end{align*}
		Hence, $h_\mu\in\cc{3}(\dom g)$, and by \cite[Proposition 18.7/Corollary 18.8]{bauschke2011convex}, noting that higher-order derivatives are defined inductively in this sense \cite[Deﬁnition 2.54, Remark 2.55]{bauschke2011convex}, we deduce
		\begin{align*}
			\abs{\left<\thirdn (\infconv{g}{h_\mu})(x)[v]u,u\right>} &= \abs{\left<\thirdn h_\mu(x - w_\mu(x))[v]u,u\right>}, \quad \forall u,v \in \dom g,
		\end{align*}
		and similarly for the second-order derivatives. By definition, the univariate function
		\begin{align}
			\varphi(t)\coloneqq h_\mu(u_1 + tv_1), \label{eq:univariatesc}
		\end{align}
		is $(M_g,\nu)$-generalized self-concordant, for every $u_1,v_1 \in \dom g$. That is, $\forall t\in\rr$,
		\begin{align*}
			\abs{\third{\varphi}(t)} \le M_g\hess{\varphi}(t)^{\frac{\nu}{2}},
		\end{align*}
		which concludes the proof with $u_1\equiv x$, $v_1\equiv w(\frac{x}{\mu})$ and $t \equiv -\mu$ in \eqref{eq:univariatesc}.
		\item Following the arguments in Items (i) and (ii) above, $w_\mu$ (and hence $g_s$) is finite-valued (see also \cite[Lemma 4.2]{burke2013epi}). Then the Lipschitz continuity of $g_s$ near some $\bar{x}\in\dom g$ follows from the convexity of $g_s$ (see \cite[Example 9.14]{rockafellar2009variational}; see also \cite[Proposition 3.6]{burke2017epi}).
	\end{enumerate}
\end{proof}

\section{A proximal quasi-Newton scheme}\label{sec:prox-N}
Our notion of self-concordant smoothing developed in the previous section is motivated by algorithmic purposes. Notably, we have established the epi-convergence of $\gps\in \gsc{g}$ to $g\in \pclsc{\rr^n}$ under suitable conditions, which plays a critical role in the optimization problem \eqref{eq:partialsmooth-prob} in a global sense. We next characterize the optimal solution set of \eqref{eq:partialsmooth-prob} using the notion of $\varepsilon$-optimality with respect to \eqref{eq:prob}. We define $\varepsilon$-$\argmin g \coloneqq \{x \stt g(x) \le \inf g + \varepsilon\}$ to be the set of points that minimize the function $g$ up to a tolerance $\varepsilon \in \rnn$. For our approach, it suffices to state the following about the set of minimizers of $\gps$.
\begin{proposition}\label{thm:epi-minimizer}
	Fix any $\mu \in \rp$. Suppose $g \in \pclsc{\rr^n}$ and $\gps \in \scsmooth{g}$. Then a minimizer of $\gps$ is $\varepsilon^\mu$-optimal for $g$ with $\varepsilon^\mu \in \rnn$.
\end{proposition}
\begin{proof}
	From \propref{thm:g-lip}, we have that, for any $\bar{x}\in \dom g$, $g_s \epiarrow g$ implies there is at least one sequence $w_\mu(\bar{x}) \to \bar{x}$. By the (super)coercivity of $\gps$, the level set $\{x\in \rr^n \stt \gps(x;\mu) \le \hat{\alpha}\}$ at $\hat{\alpha} \in \rr$ is bounded and contained in a compact set $C$ such that $w_\mu(\bar{x}) \in C$. Let $w_\mu(\bar{x}) \in \varepsilon^\mu$-$\argmin \gps \subseteq C$ (with $\mu \in \rp$ fixed). Then, since $g_s \epiarrow g$, we get from \cite[Theorem 7.31(b)]{rockafellar2009variational} that $g(\bar{x}) \le \inf g + \varepsilon^\mu$. Hence, $\bar{x} \in \varepsilon^\mu$-$\argmin g$. Finally, $w_\mu(\bar{x}) \in \varepsilon^\mu$-$\argmin g$ necessarily follows from \cite[Theorem 7.33]{rockafellar2009variational}.
\end{proof}

\propref{thm:epi-minimizer}, along with the observation in \cite[Theorem 7.37]{rockafellar2009variational}, suggests that a proximal algorithm can provide a solution to \eqref{eq:partialsmooth-prob}, which also solves \eqref{eq:prob} with a high accuracy. Hence, the proximal method effectively handles the nonsmooth part of the problem, while our regularization approach enhances both the solvability of the smooth part of the original problem and improves the handling of the nonsmooth part through the choice of the variable metric. For the optimization problem \eqref{eq:partialsmooth-prob}, we assume the following:
\begin{enumerate}[label=\enumlabel{P}, ref=\enumref{P}]
	\item $f$ is convex and $f\in\calC_{L_f}^{2,2}(\rr^n)$.\label{ass:p1}
	\item $\rho_1 I_n \le \hessn f(x^\star ) \le L_1I_n$, $\rho_2 I_n \le \hessn g_s(x^\star ) \le L_2I_n$ at a locally optimal solution $x^\star $ of \eqref{eq:partialsmooth-prob} with $L_1 \ge \rho_1 \in \rp$ and $L_2\ge \rho_2 \in \rp$.\label{ass:p2}
	\item $g\in \pclsc{\rr^n}$.\label{ass:p3}
	\item $\gps\in \scsmooth{g}$.\label{ass:p4}
\end{enumerate}
In particular, we consider $\gps(x;\mu)\coloneqq \infconv{g}{h_\mu}$, where $h$ is a suitable regularization kernel for self-concordant smoothing of $g$ in the sense of \Secref{sec:smoothing}.

Proximal quasi-Newton algorithms for solving \eqref{eq:partialsmooth-prob} consist in minimizing a sequence of \emph{upper approximation} of $\calL_s$ obtained by summing the nonsmooth part $g(x_k)$ and a local quadratic model of the smooth part $q(x_k)\coloneqq f(x_k) + \gps(x_k)$ near $x_k$. That is, for $x\in\dom{\calL}\equiv \dom f \cap \dom g$, we iteratively define
\begin{subequations}
	\begin{align}
		\hat{q}_k(x) &\coloneqq q(x_k) + \left<\gradn q(x_k),x-x_k\right> + \frac{1}{2}\norm{x-x_k}_{Q}^2,\label{eq:proxnewtonstep0}\\
		\hat{m}_k(x) &\coloneqq \hat{q}_k(x) + g(x),
	\end{align}\label{eq:proxnewtonmodel}
\end{subequations}
where $Q\in \spd{n}$, and then solve the subproblem
\begin{align}
	\delta_k \in \argmin_{d\in\rr^n} \hat{m}_k(x_k+d), \label{eq:subproblem}
\end{align}
for a proximal quasi-Newton search direction $\delta_k$. Our characterization of the optimality conditions for \eqref{eq:partialsmooth-prob} in this section, particularly the flexibility in the choice of the variable metric $Q$, is well-motivated by the class of \emph{cost approximation (CA) methods} \cite{patriksson1998cost}. This leads to a novel approach for selecting $\{x_k\}$ from the sequence of iterates $\{\delta_k\}$. The necessary optimality conditions for \eqref{eq:partialsmooth-prob} are defined by
\begin{align}
	0\in \gradn q(x^\star ) + \partial g(x^\star ), \label{eq:optimality}
\end{align}
for $x^\star \in \dom \calL$. To find points $x^\star $ satisfying \eqref{eq:optimality}, CA methods, as the name implies, iteratively approximate $\gradn q(x_k)$ by a \emph{cost approximating mapping} $\Phi\colon \rr^n \to \rr^n$, taking into account the fixed approximation error term $\Phi(x_k) - \gradn q (x_k)$. That is, a point $d$ is sought satisfying
\begin{align}
	0\in \Phi(d) + \partial g(d) + \gradn q(x_k) - \Phi(x_k). \label{eq:ca-optimality}
\end{align}
Let $\Phi$ be the gradient mapping of a continuously differentiable convex function $\psi\colon \rr^n\to \rr$. A CA method iteratively solves the subproblem
\begin{align}
	\min\limits_{d\in\rr^n} \set{\psi(d) + q(x_k) + g(d) - \psi(x_k) + \left<\gradn q(x_k) - \gradn \psi(x_k), d-x_k\right>}.\label{eq:ca-prob}
\end{align}
A step is then taken in the direction $\delta_k - x_k$, namely
\begin{align}
	x_{k+1} = x_k + \alpha_k(\delta_k - x_k),\label{ca-update}
\end{align}
where $\delta_k$ solves \eqref{eq:ca-prob} and $\alpha_k\in \rp$ is a step length typically computed via a line search such that an appropriately selected \emph{merit function} is sufficiently decreased along the direction $\delta_k - x_k$.
\begin{remark}\label{rem:steplength}
	Evaluating the merit function too many times can be impractical. One way to mitigate this issue for large-scale problems is to incorporate ``predetermined step lengths'' into the solution scheme of \eqref{eq:ca-prob}. This allows us to update $x_k$ as $x_{k+1}\equiv\delta_k$. However, methods that use this approach do not generally yield a monotonically decreasing sequence of objective values. Instead, convergence is characterized by a metric that measures the distance from iteration points to the set of optimal solutions \cite{patriksson1993unified}.
\end{remark}
We discuss next a new proximal quasi-Newton scheme that compromises between minimizing the objective values and decreasing the distance from iteration points to the set of optimal solutions as specified by a curvature-exploiting variable metric.
\subsection{Variable metric and adaptive step length selection}
A very nice feature of the CA framework is that it can help, for instance, through the specific choice of $\Phi$, to efficiently utilize the original problem's structure---a practice which is particularly useful when solving medium- to large-scale problems. This feature fits directly into our self-concordant smoothing framework. We notice that \eqref{eq:ca-prob} gives \eqref{eq:subproblem} with the following choice of $\psi$:
\begin{align}
	\psi(\cdot) = \frac{1}{2}\norm{\cdot}_Q^2, \qquad Q\in\spd{n}.\label{eq:cost-approx-term}
\end{align}
In this case, the optimality conditions and our assumptions give
\begin{align}
	(Q-\gradn q)(x_k) \in (Q + \partial g)(d),
\end{align}
which leads to
\begin{align}
	\delta_k = \prox_g^{Q}(x_k - Q^{-1}\gradn q(x_k)).
\end{align}
In the proximal quasi-Newton scheme, $Q$ may be the Hessian of $q(x_k)$ or its (low-rank) approximation. Although a diagonal structure of $Q$ is often desired in this case due to its ease of implementation, we most likely discard relevant curvature information, especially when $q$ is not assumed to be separable. Our consideration in this work entails the following characterization of the optimality conditions:
\begin{align}
	(H_k-\gradn q)(x_k) \in (Q_k + \partial g)(d),\label{eq:optimality-conditions}
\end{align}
where $H_k$ may be the Hessian, $\hessn q(x_k) \equiv H_k^f + H_k^g$, of $q$ or its approximation, where $H_k^f\equiv \hessn f(x_k)$, $H_k^g\equiv \hessn g_s(x_k;\mu)$, and $Q_k \in \spd{n}$. Specifically, we set $Q_k = H_k^g$ in \eqref{eq:optimality-conditions} and propose the following step update formula:
\begin{align}
	x_{k+1} = \prox_{\alpha_k g}^{H_k^g}(x_k - \bar{\alpha}_k H_k^{-1}\gradn q(x_k)),\label{eq:proxnewton-general}
\end{align}
where $\bar{\alpha}_k\in\rp$ results from \emph{damping} the quasi-Newton steps.
\begin{algorithm}
	\caption{\texttt{Prox-N-SCORE} (A proximal Newton algorithm)}
	\label{alg:NewtonSCOREProx}
	\begin{algorithmic}[1]
		\Require $x_0\in\rr^n$, problem functions $f$, $g$, self-concordant smoothing function $g_s\in \scsmooth{g}$, $\alpha\in(0,1]$
		\For{$k=0,\ldots$}
		\State $\mathrm{grad}_k\gets \gradn f(x_k) + \gradn g_s(x_k)$
		\State $H_k^g \gets \hessn g_s(x_k)$; $\eta_k \gets \norm{\gradn g_s(x_k)}_{H_k^{g^{-1}}}$ \Comment{Note: $H_k^g$ is diagonal}
		\State $\bar{\alpha}_k = \frac{\alpha}{1 + M_g\eta_k}$
		\State $H_k \gets \hessn f(x_k)+H_k^g$; Solve for $\Delta_k$: $H_k \Delta_k = \mathrm{grad}_k$
		\State $x_{k+1} \gets \prox_{\alpha g}^{H_k^g}(x_k - \bar{\alpha}_k\Delta_k)$
		\EndFor
	\end{algorithmic}
\end{algorithm}

The validity of this procedure in the present scheme may be seen in the interpretation of the proximal operator $\prox_{g}\left(x^+\right)$ for some $x^+\in\dom g$ as compromising between minimizing the function $g$ and staying close to $x^+$ (see \cite[Chapter 1]{parikh2014proximal}). When scaled by, say, $H_k^g$, ``closeness'' is quantified in terms of the metric induced by $H_k^g$, and we want the proximal steps to stay close (as much as possible) to the Newton iterates relative to, say, $\norm{\cdot}_{H_k^g}$. To see this, we note that in view of the fixed-point characterization \eqref{eq:ca-prob} via CA methods, we may interpret proximal quasi-Newton algorithms as a fixation of the error term $\gradn \psi - \gradn q$ at some point in $\dom q \cap \dom g$. Let us fix some $\bar{x} \in \dom q \cap \dom g$ and introduce the operator $E_{\bar{x}}$ defined by
\begin{align}
	E_{\bar{x}}(z) \coloneqq \hessn q(\bar{x})z - \bar{\alpha} \gradn q(z),\label{eq:error-term1}
\end{align}
where $0<\bar{\alpha}\le\alpha\le1$. Set $Q = Q_k\in\spd{n}$ arbitrary in \eqref{eq:cost-approx-term}. We aim to exploit the structure in $g_s$ (and $\hessn g_s$), so we define an operator $\xi_{\bar{x}}(Q_k,\cdot)$ to quantify the error between $\hessn g_s$ and $Q_k$ as follows:
\begin{align}
	\xi_{\bar{x}}(Q_k,z) \coloneqq (\hessn g_s(\bar{x})-Q_k)(z - x_k).\label{eq:error-term2}
\end{align}
We provide a local characterization of the optimality conditions for \eqref{eq:ca-prob} in terms of $E_{\bar{x}}$ and $\xi_{\bar{x}}$ in the next result.
\begin{proposition}\label{thm:optimality}
	Let the operators $E_{\bar{x}}$ and $\xi_{\bar{x}}(Q_k,\cdot)$ be defined by \eqref{eq:error-term1} and \eqref{eq:error-term2}, respectively. Then the optimality conditions for \eqref{eq:ca-prob} with $\psi(\cdot) = \frac{1}{2}\norm{\cdot}_{Q_k}^2$ are locally characterized in terms of $E_{\bar{x}}$ and $\xi_{\bar{x}}(Q_k,\cdot)$ by
	\begin{align}
		E_{\bar{x}}(x_k) + \xi_{\bar{x}}(Q_k, d) \in \hessn g_s(\bar{x})d + \alpha\partial g(d). \label{eq:new-optimality}
	\end{align}
	More precisely, \eqref{eq:optimality-conditions} holds with $Q_k=\hessn g_s(\bar{x})$ whenever $\bar{x}$ is the unique optimizer satisfying \eqref{eq:new-optimality} at a local solution $d$ of \eqref{eq:ca-prob}.
\end{proposition}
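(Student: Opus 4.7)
The plan is to derive $\eqref{eq:new-optimality}$ directly from the first-order optimality condition of the convex subproblem $\eqref{eq:ca-prob}$, inserting the base point $\bar{x}$ into each linear term so that $\hessn g_s(\bar{x})$ becomes the leading operator on the right-hand side while the remainders collapse into $E_{\bar{x}}(x_k)$ and $\xi_{\bar{x}}(Q_k,d)$.

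First, with $\psi(\cdot)=\tfrac{1}{2}\|\cdot\|_{Q_k}^2$ one has $\gradn\psi(x)=Q_k x$, and since $g\in\pclsc{\rr^n}$ by $\ref{ass:p3}$, the objective of $\eqref{eq:ca-prob}$ is proper convex lsc. Fermat's rule together with the subdifferential sum rule gives the bare optimality inclusion
$$Q_k x_k - \gradn q(x_k)\;\in\;Q_k d + \partial g(d).\qquad(\star)$$
Next, I split $Q_k d=\hessn g_s(\bar{x})d-(\hessn g_s(\bar{x})-Q_k)d$ and rewrite, via the definition $\eqref{eq:error-term2}$, $(\hessn g_s(\bar{x})-Q_k)d=\xi_{\bar{x}}(Q_k,d)+(\hessn g_s(\bar{x})-Q_k)x_k$. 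Substituting into $(\star)$ and cancelling the $Q_k x_k$ terms yields
$$\hessn g_s(\bar{x})x_k-\gradn q(x_k)+\xi_{\bar{x}}(Q_k,d)\;\in\;\hessn g_s(\bar{x})d+\partial g(d).$$

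The remaining task is to identify the linear combination $\hessn g_s(\bar{x})x_k-\gradn q(x_k)$ with $E_{\bar{x}}(x_k)=\hessn q(\bar{x})x_k-\bar{\alpha}\gradn q(x_k)$ while correctly accounting for the two scalings of the scheme: the Newton damping $\bar{\alpha}\in(0,1]$ that enters $\eqref{eq:error-term1}$ and the prox parameter $\alpha$ that multiplies $\partial g$ in the scaled proximal operator. Using $\hessn q(\bar{x})=\hessn f(\bar{x})+\hessn g_s(\bar{x})$ and rescaling the CA subproblem so that $g$ enters as $\alpha g$ (equivalently, $\psi$ is replaced by $\psi/\alpha$ and $\gradn q(x_k)$ is weighted by $\bar{\alpha}$, so that the resulting inclusion matches the algorithm's actual prox step \eqref{eq:proxnewton-general}) converts the above inclusion into $\eqref{eq:new-optimality}$.

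For the second assertion, the choice $Q_k=\hessn g_s(\bar{x})$ makes $\xi_{\bar{x}}(Q_k,d)$ vanish, so $\eqref{eq:new-optimality}$ collapses to
$$\hessn q(\bar{x})x_k-\bar{\alpha}\gradn q(x_k)\;\in\;\hessn g_s(\bar{x})d+\alpha\,\partial g(d),$$
which is exactly $\eqref{eq:optimality-conditions}$ once $H_k$ is identified with $\hessn q(\bar{x})$ and the scalings $\bar{\alpha},\alpha$ are read off as the Newton damping and prox-scaling respectively. Uniqueness of $\bar{x}$ follows from the strict convexity of $\tfrac{1}{2}\|\cdot\|_{Q_k}^2$, guaranteed by $Q_k=\hessn g_s(\bar{x})\succ 0$ via $\ref{ass:p2}$. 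The main obstacle will be bookkeeping the two distinct step sizes $\bar{\alpha}$ and $\alpha$, which originate in structurally different parts of the iteration (damped Newton direction versus proximal relaxation); the cleanest way I see to resolve this is to carry out the $\alpha$-rescaling of $\eqref{eq:ca-prob}$ and the $\bar{\alpha}$-reweighting of the gradient in a single step before matching terms with $E_{\bar{x}}$, so that the correspondence between the CA optimality and the algorithmic prox update becomes transparent.
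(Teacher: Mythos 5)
Your algebraic starting point is fine: Fermat's rule for \eqref{eq:ca-prob} does give $Q_k x_k - \gradn q(x_k) \in Q_k d + \partial g(d)$, and your splitting of $Q_k d$ through $\xi_{\bar{x}}(Q_k,d)$ correctly produces
$\hessn g_s(\bar{x})x_k-\gradn q(x_k)+\xi_{\bar{x}}(Q_k,d)\in\hessn g_s(\bar{x})d+\partial g(d)$.
The gap is the very next step. You need the left-hand side to become $E_{\bar{x}}(x_k)+\xi_{\bar{x}}(Q_k,d)$ with $E_{\bar{x}}(x_k)=\hessn q(\bar{x})x_k-\bar{\alpha}\gradn q(x_k)$, i.e.\ you must account for the discrepancy $\hessn f(\bar{x})x_k-(1-\bar{\alpha})\gradn q(x_k)$, and you must turn $\partial g(d)$ into $\alpha\,\partial g(d)$. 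You attribute all of this to ``rescaling the CA subproblem,'' but no rescaling of $\psi$ or reweighting of $\gradn q(x_k)$ can manufacture the missing term $\hessn f(\bar{x})x_k$: it involves the Hessian of $f$ at the anchor point $\bar{x}$, which simply does not appear in the exact optimality condition of \eqref{eq:ca-prob}. This is precisely why the proposition claims only a \emph{local} characterization. The paper's proof obtains these terms from genuinely local ingredients: gradient consistency \eqref{eq:grad-consistency} together with differentiability in the extended sense gives a second-order expansion $\partial g(d)\subset v_g+\hessn g_s(\bar{x})(d-\bar{x})+o(\abs{d-\bar{x}})\calE_r(\bar{x})$ with $v_g=\gradn g_s(\bar{x})$, and the assertion $\hessn f(\bar{x})(d-\bar{x})\in\calE_r(\bar{x})$ at a local solution $d$ is what injects $\hessn f(\bar{x})x_k$ and the damping factor $\bar{\alpha}$ into the inclusion, valid only up to $o(\abs{d-\bar{x}})$ errors inside the Dikin ellipsoid. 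Treating \eqref{eq:new-optimality} as an exact algebraic consequence of $(\star)$, as you do, makes the decisive step a non sequitur.

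A secondary issue: your uniqueness argument conflates the subproblem minimizer $d$ with the anchor point $\bar{x}$. Strict convexity of $\frac{1}{2}\norm{\cdot}_{Q_k}^2$ gives uniqueness of $d$ in \eqref{eq:ca-prob}, but the proposition's final clause concerns $\bar{x}$ being the unique optimizer satisfying \eqref{eq:new-optimality}; in the paper's argument this is tied to $\bar{x}$ being the unique solution $x^*$ of \eqref{eq:partialsmooth-prob}, not to the coercivity of $\psi$. The collapse of $\xi_{\bar{x}}(Q_k,d)$ under $Q_k=\hessn g_s(\bar{x})$ in your last display is correct, but it rests on the unproven identification above.
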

\begin{proof}
	As $\gps$ satisfies the property in \ref{ass:sc1}, it holds that \cite[Lemma 3.4]{burke2013epi}
	\begin{align}
		\limsup_{\substack{x\to\bar{x}\\\mu\downarrow 0}} \gradn g_s(x;\mu) = \partial g(\bar{x}).\label{eq:grad-consistency}
	\end{align}	
	Hence, by \lemref{thm:epiconvergence} and \cite[Theorem 13.2]{rockafellar2009variational}, there exists $v_g \in \rr^n$, in the \emph{extended sense} of differentiability (see \cite[Definition 13.1]{rockafellar2009variational}), such that
	\begin{subequations}
		\begin{align}
			\limsup_{\substack{x\to\bar{x}\\\mu\downarrow 0}} \gradn g_s(x) = \partial g(\bar{x}) = \{v_g\},\\
			\emptyset \ne \partial g(d) \subset v_g + \hessn g_s(\bar{x})(d-\bar{x}) + o(\|d-\bar{x}\|)\calE_r(\bar{x}). \label{eq:extended-differentiability}
		\end{align}
	\end{subequations}
	Let $x_k$ be in some neighbourhood of $\bar{x}$ and let $\{x_k\} \to \bar{x}$ be generated by an iterative process. By assumption, the differentiable terms in \eqref{eq:extended-differentiability} are convex and the differential operators are monotone. It then holds that
	\begin{align}
		\partial g(d) \subset v_g + \hessn g_s(\bar{x})(d-x_k) + o(\|d-\bar{x}\|)\calE_r(\bar{x}), \label{eq:optimal-differentiability}
	\end{align}
	for all $x_k$ in the neighbourhood of $\bar{x}$. Since differentiability in the extended sense is necessary and sufficient for differentiability in the \emph{classical sense} (see \cite[Definition 13.1 and Theorem 13.2]{rockafellar2009variational}), it holds for some $\mu\in \rp$ that $v_g \equiv \gradn g_s(\bar{x})$ which is defined through:
	\begin{align}
		\gradn g_s(d) = \gradn g_s(\bar{x}) + \hessn g_s(\bar{x})(d-\bar{x}) + o(\|d-\bar{x}\|). \label{eq:classical-differentiability}
	\end{align}
	Consequently, using \eqref{eq:ca-optimality} (with $\Phi = \gradn\psi$), and defining the Dikin ellipsoid $\calE_r(\bar{x})$ in terms of $g_s$ for $r$ small enough, we deduce from \eqref{eq:optimal-differentiability}, \eqref{eq:classical-differentiability} that $Q_k(x_k-d) + \hessn g_s(\bar{x})(x_k-d) - \bar{\alpha}\gradn q(x_k) \in \bar{\alpha}\gradn g_s(\bar{x})$ for $0<\bar{\alpha}\le1$. We assert $\hessn f(\bar{x})(d-\bar{x}) \in \calE_r(\bar{x})$ at a local solution $d$ of \eqref{eq:ca-prob}, and then deduce again from \eqref{eq:optimal-differentiability}, \eqref{eq:classical-differentiability} that $\bar{\alpha}\gradn g_s(\bar{x}) + \hessn g_s(\bar{x})(d-x_k) + \hessn f(\bar{x})x_k \in \alpha \partial g(d)$ holds for $0<\bar{\alpha}\le\alpha\le1$ near $\bar{x}$, whenever $\bar{x}$ is the unique solution $x^\star $ of \eqref{eq:partialsmooth-prob}. As a result, using $q \coloneqq f + g_s$, we get
	\begin{align}
		(\hessn q(\bar{x}) - \bar{\alpha}\gradn q)x_k - \hessn g_s(\bar{x})x_k \in Q_k(d-x_k) + \alpha \partial g(d). \label{eq:ca-optimality-expanded}
	\end{align}
	In terms of $E_{\bar{x}}$ and $\xi_{\bar{x}}(Q_k,\cdot)$, \eqref{eq:ca-optimality-expanded} may be written as \eqref{eq:new-optimality}, which exactly gives \eqref{eq:optimality-conditions} with the choice $Q_k = \hessn g_s(\bar{x})$.
\end{proof}

We consider \emph{damping} the quasi-Newton steps such that
\begin{align}
	\bar{\alpha}_k = \frac{\alpha_k}{1 + M_g\eta_k}, \label{eq:steplength}
\end{align}
where $M_g$ is given by \ref{ass:p4} and $\eta_k \coloneqq \norm{\gradn g_s(x_k)}_{x_k}^{\diamond}$ is the dual norm of $\gradn g_s(x_k)$ with respect to $g_s(x_k)$. Note that the above choice for $\bar{\alpha}_k$, in the context of minimizing generalized self-concordant functions, assumes $\nu\ge2$ (see, \eg, \cite[Equation 12]{sun2019generalized}). Suppose for example $\alpha_k=1$ is fixed and $\nu=3$, then \eqref{eq:ca-optimality-expanded} leads to the standard damped-step proximal quasi-Newton method in the framework of Newton decrement (\cf~\cite{tran2015composite,sun2019generalized}).

By \eqref{eq:hseparable}, $H_k^g$ has a desirable diagonal structure and hence can be cheaply updated from iteration to iteration. This structure provides an efficient way to compute the scaled proximal operator $\prox_{g}^{H_k^g}$, \eg, via the proximal calculus presented in \cite{becker2019quasi} (see \Secref{sec:experiments} for two practical examples). Overall, by exploiting the structure of the problem, precisely
\begin{enumerate}
	\item taking adaptive steps that properly capture the curvature of the objective functions, and
	\item scaling the proximal operator of $g$ by a variable metric $H_k^g$ which has a simple, diagonal structure,
\end{enumerate}
we can adapt to an affine-invariant structure due to the quasi-Newton steps and ensure we remain close to {them} towards convergence.

If we choose $H_k\equiv \hessn q(x_k)$ in \eqref{eq:proxnewton-general}, we obtain a proximal Newton step (see Algorithm \ref{alg:NewtonSCOREProx}):
\begin{align}
	x_{k+1} = \prox_{\alpha_k g}^{H_k^g}(x_k - \bar{\alpha}_k\hessn q(x_k)^{-1}\gradn q(x_k)). \label{eq:proxnewton}
\end{align}
However, $H_k$ may be any approximation of the Hessian of $q$ at $x_k$. In view of \eqref{eq:new-optimality}, this corresponds to replacing the Hessian term $\hessn q(\bar{x})$ in \eqref{eq:error-term1} by the approximating matrix evaluated at $\bar{x}$.
\begin{algorithm}
	\caption{\texttt{Prox-GGN-SCORE} (A proximal generalized Gauss-Newton algorithm)}
	\label{alg:GGNSCOREProx}
	\begin{algorithmic}[1]
		\Require $x_0\in\rr^n$, problem functions $f$, $g$, self-concordant smoothing function $g_s\in \scsmooth{g}$, model $\calM$, input-output pairs $\{u^{(i)},y^{(i)}\}_{i=1}^m$ with $y^{(i)} \in \rr^{n_y}$, $\alpha\in (0,1]$
		\For{$k=0,\ldots$}
		\State $H_k^g \gets \hessn g_s(x_k)$; $\eta_k \gets \norm{\gradn g_s(x_k)}_{H_k^{g^{-1}}}$ \Comment{Note: $H_k^g$ is diagonal}
		\State $\bar{\alpha}_k \gets \frac{\alpha}{1 + M_g\eta_k}$
		\If{$m+n_y \le n$}
		\State Compute $\delta_k^{\mathrm{ggn}}$ via \eqref{eq:ggn-step-approx}
		\Else
		\State Compute $\delta_k^{\mathrm{ggn}}$ via \eqref{eq:ggn-step}
		\EndIf
		\State $x_{k+1} \gets \prox_{\alpha g}^{H_k^g}(x_k + \bar{\alpha}_k\delta_k^{\mathrm{ggn}})$
		\EndFor
	\end{algorithmic}
\end{algorithm}
\subsection{A proximal generalized Gauss-Newton algorithm}\label{sec:ggnscore}
In describing the proximal GGN algorithm, consider first the simple case $g\equiv0$. Then \eqref{eq:proxnewton-general} with $\bar{\alpha}_k=1$ gives exactly the pure Newton direction
\begin{align}
	\delta_k^{\mathrm{ggn}} = -H_k^{-1}\gradn q(x_k).\label{eq:newtonstep}
\end{align}
Now suppose that the function $f$ quantifies a data-misfit or loss between the outputs\footnote{Note that for the sake of simplicity, we assume here $y^{(i)}\in\rr$, but it is straightforward to extend the approach that follows to cases where $y^{(i)}\in\rr^{n_y}$, $n_y>1$.} $\hat{y}^{(i)}$ of a model $\calM(\cdot;x)$ and the expected outputs $y^{(i)}$, for $i=1,2,\ldots,m$, as in a typical machine learning problem, and that $g\ne 0$. Precisely, let $\hat{y}^{(i)} \coloneqq \calM(u^{(i)};x)$, and suppose that $f$ can be written as
\begin{align}
	f(x) = \sum_{i=1}^{m}\ell(y^{(i)},\hat{y}^{(i)}),\label{eq:f-ggn}
\end{align}
where $\ell\colon\rr\times\rr\to\rr$ is a loss function. Define an ``augmented'' Jacobian matrix $J_k\in\rr^{(m+1)\times n}$ by \cite{adeoye2021sc,adeoye2023score}
\begin{align}
	J_k^\top  &\coloneqq \begin{bmatrix}
		\gradn_{x_k}{\hat{y}^{(1)}} & \gradn_{x_k}{\hat{y}^{(2)}} & \cdots & \gradn_{x_k}{\hat{y}^{(m)}} & \gradn{\gps(x_k)}
	\end{bmatrix}. \label{eq:Jaug}
\end{align}
Then GGN approximation of the Newton direction \eqref{eq:newtonstep} gives
\begin{align}
	\delta_k^{\mathrm{ggn}} = -(H_k^f + H_k^g )^{-1}\gradn q \approx -(J_k^\top V_k J_k + H_k^g )^{-1}J_k^\top e_k,
	\label{eq:ggn-step}
\end{align}
where the vector $e_k\coloneqq[l'_{\hat{y}^{(1)}}(y^{(1)},\hat{y}^{(1)}),\ldots,l'_{\hat{y}^{(m)}}(y^{(m)},\hat{y}^{(m)}),1]^\top\in\rr^{m+1}$ defines an augmented ``residual'' term, and $V_k\coloneqq\diag(v_k)$, with $v_k\coloneqq[l''_{\hat{y}^{(1)}}(y^{(1)},\hat{y}^{(1)}),\ldots,l''_{\hat{y}^{(m)}}(y^{(m)},\hat{y}^{(m)}),0]^\top\in\rr^{(m+1)}$. If $m+1<n$ (possibly $m\ll n$), that is, when the model is \emph{overparameterized}, the following equivalent formulation of \eqref{eq:ggn-step} provides a computationally efficient way to compute the GGN search direction \cite{adeoye2021sc,adeoye2023score}:
\begin{align}
	\delta_k^{\mathrm{ggn}} = -H_k^{g^{-1}}J_k^\top(I_m+V_k J_k H_k^{g^{-1}}J_k^\top)^{-1}e_k.\label{eq:ggn-step-approx}
\end{align}
Note that in case the function $g$ (and hence $g_s$) is scaled by some (nonnegative) constant, only the identity matrix $I_m$ may be scaled accordingly. Now using $H_k\equiv J_k^\top V_k J_k + H_k^g$ in \eqref{eq:proxnewton-general} gives the proximal GGN update (see Algorithm \ref{alg:GGNSCOREProx}):
\begin{align}
	x_{k+1} = \prox_{\alpha_k g}^{H_k^g}(x_k + \bar{\alpha}_k\delta_k^{\mathrm{ggn}}), \label{eq:proxggn}
\end{align}
where $\bar{\alpha}_k$ is as defined in \eqref{eq:steplength}.
\section{Structured penalties}\label{sec:structured}
As we have noted, more general nonsmooth problems impose certain structures on the variables that must be handled explicitly by the algorithm. Such situations arise in some Lasso and multi-task regression problems, where problem \eqref{eq:prob} takes the form
\begin{align}
	\min\limits_{x\in\rr^n} f(x) + \underbrace{\calR(x) + \Omega(Cx)}_{g(x)},\label{eq:structured-prob}
\end{align}
where, in addition to $\calR(x)$, the function (\cf~\cite{chen2010graph,chen2012smoothingprox})
\begin{align}
	\Omega(Cx) \coloneqq \max_{u\in \calQ} \langle u, Cx \rangle,
\end{align}
enforces a desired structure of the solution estimates. Here $C:\mathbb{R}^{n}\to\mathbb{V}$ is a linear map into a finite-dimensional vector space $\mathbb{V}$, and $\mathcal{Q}\subseteq\mathbb{V}^{\star}$ is a closed, convex subset of the dual space $\mathbb{V}^{\star}$.

For example, in the sparse-group lasso problem \cite{friedman2010note,simon2013sparse}, $\Omega(Cx) = \gamma \sum_{j\in \calG}\omega_j\|x^{(j)}\|$ induces group level sparsity on the solution estimates and $\calR(x) = \beta\norm{x}_1$ promotes the overall sparsity of the solution, so that the optimization problem is written as
\begin{align}
	\min\limits_{x\in\rr^n} f(x) + \beta\norm{x}_1 + \beta_\calG \sum_{j\in \calG}\omega_j\|x^{(j)}\|,\label{eq:glasso}
\end{align}
where $\beta\in\rp$, $\beta_\calG \in \rp$, $\calG = \{j_k, \ldots, j_{n_g}\}$ is the set of variables groups with $n_g=\card(\calG)$, $x^{(j)} \in \rr^{n_j}$ is the subvector of $x$ corresponding to variables in group $j$ and $\omega_j \in \rp$ is the group penalty parameter. Another example is the graph-guided fused lasso for multi-task regression problems \cite{kim2009multivariate}, where the function $\Omega(Cx)=\beta_\calG\sum_{e=(r,s)\in E,r<s}\tau(\omega_{rs})\abs{x^{(r)} - \sign(\omega_{rs})x^{(s)}}$ encourages a fusion effect over variables $x^{(r)}$ and $x^{(s)}$ shared across tasks through a graph $G\equiv (V,E)$ of relatedness, where $V=\{1,\ldots,n\}$ denotes the set of nodes and $E$ the edges; $\beta_\calG\in \rp$, $\tau(\omega_{rs})$ is a fusion penalty function, and $\omega_{rs}\in \rr$ is the weight of the edge $e=(r,s)\in E$. Here, with $\calR(x) = \beta\norm{x}_1$, $\beta\in\rp$, the optimization problem is written as
\begin{align}
	\min\limits_{x\in\rr^n} f(x) + \beta\norm{x}_1 + \beta_\calG\sum_{e=(r,s)\in E,r<s}\tau(\omega_{rs})\abs{x^{(r)} - \sign(\omega_{rs})x^{(s)}}.\label{eq:gflasso}
\end{align}
In both examples, $C$ is defined so as to encode these additional structures. See \Secref{ex:glasso} for an illustration involving the sparse-group lasso.
\subsection{Structure reformulation for self-concordant smoothing}
The key observation in problems of the form \eqref{eq:structured-prob} is that the function $\Omega(Cx)$ belongs to the class of nonsmooth convex functions that is well-structured for Nesterov's smoothing \cite{nesterov2005smooth} in which a smooth approximation $\Omega_s$ of $\Omega$ has the form\footnote{The reader should not confuse the barrier smoothing technique of, say, \cite{nesterov2011barrier,tran2014barrier}, with the self-concordant smoothing framework of this paper. The self-concordant barrier smoothing techniques, just like Nesterov's smoothing, realize first-order and subgradient algorithms that solve problems of this exact form.}
\begin{align}
	\Omega_s(Cx;\mu) = \max_{u\in \calQ} \left\{\langle u, Cx \rangle - \mu d(u)\right\}, \quad \mu \in \rp,\label{eq:nesterovsmooth}
\end{align}
where $d$ is a \emph{prox-function}\footnote{A function $d_1$ is called a \emph{prox-function} of a closed and convex set $\calQ_1$ if $\calQ_1 \subseteq \dom d_1$, and $d_1$ is continuous and strongly convex on $\calQ_1$ with convexity parameter $\rho_1 \in \rp$ \cite{nesterov2005smooth}.} of the set $\calQ$. Note that Nesterov's smoothing approach assumes the knowledge of the exact structure of $C$. In the sequel, we shall write $\Omega^C(x)\equiv \Omega(Cx)$ or $\Omega_s^C(x;\mu)\equiv \Omega_s(Cx)$, with the superscript ``$C$'' to indicate the function is \emph{structure-aware} via $C$.
\begin{proposition}\label{thm:structured-gsc}
	Let $C\colon \rr^n \to \rr^n$ be a linear map and let $\omega$ be a continuous convex function defined on a closed and convex set $Q\subseteq \dom \omega\subseteq\rr^n$. Further, define
	\begin{align*}
		\tilde{\Omega}(x) \coloneqq \max_{u\in \calQ} \left\{\langle u, Cx \rangle - \omega(u)\right\},
	\end{align*}
	and let $d\coloneqq h^\star $, where $h\colon \rr^n \to \rr$ satisfies $\hessn h \in \spd{n}$ and is of the form \eqref{eq:hseparable} with $\phi$ satisfying \ref{ass:k1} -- \ref{ass:k2} so that $h\in\gsc{h}$ with $\nu \in [3,6)$ if $n>1$ and with $\nu \in (0,6)$ if $n=1$. Then the function
	\begin{align}
		\Omega_s(x;\mu) = \max_{u\in \calQ} \left\{\langle u, Cx \rangle - \omega(u) - \mu d(u)\right\}, \quad \mu \in \rp,
	\end{align}
	is a self-concordant smoothing function for $\tilde{\Omega}(x)$.
\end{proposition}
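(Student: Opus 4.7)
The plan is to rewrite $\Omega_s(x;\mu)$, via Fenchel duality, as an infimal convolution composed with the linear map $C$, reducing the claim to \propref{thm:g-properties} together with the affine-invariance of generalized self-concordance from \cite{sun2019generalized}. To set up the reduction, I would define $F_1(u) \coloneqq \omega(u) + \delta_\calQ(u)$ and $F_2(u) \coloneqq \mu d(u)$; since $\omega$ is continuous convex on the closed convex $\calQ$ and $d = h^*$ is finite-valued on all of $\rr^n$ (because $h$ is supercoercive by \lemref{thm:hsc} and standard conjugacy, so $\dom h^* = \rr^n$), both $F_1,F_2 \in \pclsc{\rr^n}$ and the qualification $\mathrm{int}(\dom F_2) \cap \dom F_1 = \calQ \neq \emptyset$ holds. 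The Fenchel sum rule then yields $(F_1+F_2)^* = F_1^* \square F_2^*$, and a direct computation using $d^* = h^{**} = h$ shows $(\mu d)^*(v) = \mu d^*(v/\mu) = \mu h(v/\mu) = h_\mu(v)$. Writing $\tilde g \coloneqq (\omega + \delta_\calQ)^*$, this produces the key identities $\tilde\Omega(x) = \tilde g(Cx)$ and $\Omega_s(x;\mu) = (\infconv{\tilde g}{h_\mu})(Cx)$.

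With this reformulation in hand, I would apply \propref{thm:g-properties} directly to $g_s \coloneqq \infconv{\tilde g}{h_\mu}$. Since $\tilde g \in \pclsc{\rr^n}$ and $h$ is supercoercive and $(M_h,\nu)$-generalized self-concordant, item (i) of that proposition gives that $g_s$ is proper lsc convex with exact inf-conv, item (ii) gives $g_s \in \gsc{}$ with the prescribed constant $M_g$ in each of the cases $\nu \in (0,3]$ and $\nu > 3$, and \lemref{thm:epiconvergence}(iii) gives the epi-convergence $g_s \epiarrow \tilde g$ as $\mu \downarrow 0$. To transfer these properties through the outer composition with $C$: for \ref{ass:sc1}, the pointwise convergence $\Omega_s(x;\mu) \to \tilde\Omega(x)$ (immediate from the variational form since $\calQ \subseteq \dom d$, and monotone after absorbing $\inf_\calQ d$ into $\omega$) combined with convexity on $\rr^n$ yields the epi-convergence $\Omega_s(\cdot;\mu) \epiarrow \tilde\Omega$; for \ref{ass:sc2}, I would invoke Proposition~2 of \cite{sun2019generalized} applied to $g_s \circ C$, which multiplies the self-concordance constant by $\|C\|^{3-\nu}$ when $\nu \in (0,3]$ and by a factor involving the singular values of $C$ when $\nu > 3$.

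The main obstacle I expect is the affine-invariance step when $\nu > 3$: the rescaling from \cite{sun2019generalized} involves a negative power of $\sigma_{\min}(C)$, so one tacitly needs $C$ to have full column rank, which is natural for the structured penalties of \secref{sec:structured} but should be spelled out (or the analysis restricted to $\mathrm{range}(C^\top)$). A secondary technical point is the explicit bookkeeping that matches the hypothesized ranges of $\nu$ (namely $[3,6)$ for $n > 1$ and $(0,6)$ for $n = 1$) with the combined dimension- and scaling-factors: the $\mu$-dependent bound from \propref{thm:g-properties}(ii) composed with the $C$-dependent rescaling from \cite{sun2019generalized} must be verified to produce a finite generalized self-concordance constant of the form required by \defnref{def:gsc-rn}, and the lower endpoint $\nu \ge 3$ for $n > 1$ is precisely where the dimension factor $n^{(3-\nu)/2}$ from \propref{thm:g-properties} stops growing under the conjugate-transform identification $d = h^*$.
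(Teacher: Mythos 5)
Your proposal is correct in its essentials and shares the paper's overall architecture --- pass to conjugates, recognize $\Omega_s$ as an infimal convolution evaluated at $Cx$, then invoke \propref{thm:hmugsc} and \propref{thm:g-properties} --- but it takes a genuinely different route at the decisive step. The paper keeps the Nesterov prox-term $\mu d=\mu h^*$ as the object carrying the self-concordance: it invokes \cite[Proposition 6]{sun2019generalized} to assert $d,\,d+\delta_{\calQ}\in\calF_{M_h,6-\nu}$ and convolves $\Omega=(\omega+\delta_{\calQ})^*$ with the kernel $\tilde h_\mu$, $\tilde h=(d+\delta_{\calQ})^*$; that conjugate-duality step is precisely where the hypotheses $\hessn h\in\spd{n}$ and $\nu\in[3,6)$ for $n>1$ (resp.\ $\nu\in(0,6)$ for $n=1$) are consumed. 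You instead use only the elementary biconjugate identity $(\mu d)^*=\mu d^*(\cdot/\mu)=h_\mu$ together with the Fenchel sum rule under the qualification $\dom(\mu d)=\rr^n$, so the kernel is $h_\mu$ itself and \propref{thm:g-properties} applies verbatim at the original order $\nu$; this is more elementary and avoids \cite[Proposition 6]{sun2019generalized} entirely. The honest consequence is that under your reduction the stated range restriction on $\nu$ does essentially no work --- your closing attempt to tie it to the dimension factor $n^{(3-\nu)/2}$ is not where it comes from; it is inherited from the conjugate-duality result that the paper's proof (but not yours) relies on. On the other hand, you are more careful than the paper on two points it leaves implicit: the transfer of \ref{ass:sc1} through the composition with $C$ (your monotone-plus-convexity argument, after subtracting $\mu\inf_{\calQ}d$, is a workable way to get epi-convergence of $\Omega_s(\cdot;\mu)$ to $\tilde\Omega$, though it tacitly needs $\inf_{\calQ}d$ finite), and the affine-invariance step for \ref{ass:sc2}, where your observation that the case $\nu>3$ requires $C$ to have full column rank (or a restriction to $\mathrm{range}(C^\top)$) is a genuine caveat absent from the paper.
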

\begin{proof}
	We follow the approach in \cite[Section 4]{beck2012smoothing}. First note that we can write $\tilde{\Omega}(x)=\Omega(Cx)$, where
	\begin{align*}
		\Omega \coloneqq (\omega + \delta_Q)^\star .
	\end{align*}
	Now, let $\tilde{d}\coloneqq d + \delta_Q$. In view of \cite[Proposition 6]{sun2019generalized}, we have $d, \tilde{d} \in \calF_{M_d,\nu_d}$ where $M_d = M_h$ and $\nu_d = 6-\nu$. Next, define $\tilde{h} \coloneqq (\tilde{d})^\star $. We have
	\begin{align*}
		(\Omega^\star  + \tilde{h}_\mu^\star )^\star (x) &= (\omega + \delta_Q + \mu\tilde{d})^\star (x)\\
		&= \max_{u\in \calQ}\left\{\langle u, x\rangle - \omega(u) -\mu d(u)\right\},
	\end{align*}
	which is precisely $(\infconv{\tilde{\Omega}}{h_\mu^\star })(x)$ according to \cite[Theorem 4.1(a)]{beck2012smoothing} (\cf~\eqref{eq:inf-conv-dual}). Now, since $d\coloneqq h^\star \in \calF_{M_d,\nu_d}$, the result follows from \propref{thm:hmugsc} and \propref{thm:gs-gsc}.
\end{proof}
Under the assumptions of \propref{thm:structured-gsc}, $\Omega_s^C(x;\mu)$ provides a self-concordant smooth approximation of $\Omega(x)$ with $\mathbb{V} \equiv \rr^n$. In this case, $\omega = 0$ in \propref{thm:structured-gsc} and the prox-function $d$ in \eqref{eq:nesterovsmooth} is given by $h^\star $, the dual of $h\in \gsc{h}$.
\subsection{Prox-decomposition and smoothness properties}\label{sec:prox-decomp}
An important property of the function $g=\calR + \Omega^C$ we want to infer here is its prox-decomposition property \cite{yu2013decomposing} in which the (unscaled) proximal operator of $g$ satisfies
\begin{align}
	\prox_{g} = \prox_{\Omega^C} \circ \prox_{\calR}.
\end{align}
Under our assumptions on $g$ and $h$, this property extends for the inf-conv regularization (and hence the self-concordant smoothing framework)\footnote{Additional assumptions may be required to hold in order to accurately define this property in our framework, \eg, nonoverlapping groups in case of the sparse-group lasso problem, in which case, $\mathbb{V}$ is the space $\rr^n$.}. To see this, let $\mathbb{V} \equiv \rr^n$, and note the following equivalent expression for the definition of inf-convolution \eqref{eq:infconv}:
\begin{align*}
	(\infconv{\calR}{h_\mu})(x) = \inf\limits_{\substack{(u,v)\in\rr^n\times\rr^n\\u+v=x}}\left\{\calR(u)+h_\mu(v)\right\}.
\end{align*}
Define also the function $r_s\colon \rr \times \rr \to \rr$ such that
\begin{align*}
	(\infconv{\calR}{h_\mu})(x) \equiv \sum_{i=1}^{n} r_s(x^{(i)};\mu).
\end{align*}

The next result follows, highlighting what we propose as the \emph{inf-decomposition} property.
\begin{proposition}\label{thm:inf-decomposition}
	Let $g \in \pclsc{\rr^n}$ be given as the sum $g(x)=\calR(x) + \Omega^C(x)$. Suppose that the function $h \in \pclsc{\rr^n}$ is supercoercive and define $z \coloneqq [r_s(x^{(1)};\mu),\ldots,r_s(x^{(n)};\mu)]^\top$. Then the regularization process $(\infconv{g}{h_\mu})(x)$, for all $\mu \in \rp$, is given by the composition
	\begin{align}
		(\infconv{g}{h_\mu})(x) = (\infconv{\Omega^C}{h_\mu})(z).
	\end{align}
\end{proposition}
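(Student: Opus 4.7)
My plan is to split the defining infimal convolution of $g_s$ along the decomposition $g = \calR + \Omega^C$ and then match the resulting two-stage minimization to the prox-decomposition identity $\prox_g = \prox_{\Omega^C} \circ \prox_\calR$ from \cite{yu2013decomposing}. First, since $\calR, \Omega^C \in \pclsc{\rr^n}$ and $h$ is supercoercive, applying \propref{thm:g-properties}\ref{thm:exact} separately to $\calR$, to $\Omega^C$, and to $g$ itself guarantees that each of $\calR \square h_\mu$, $\Omega^C \square h_\mu$, and $g \square h_\mu$ is an exact infimal convolution lying in $\pclsc{\rr^n}$, so every infimum below is attained at a unique minimizer.

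Starting from the equivalent formulation \eqref{eq:infconv-2}, I would write
\[
g_s(x) \;=\; \inf_{u+v=x}\bigl\{\calR(u) + \Omega^C(u) + h_\mu(v)\bigr\},
\]
and introduce an auxiliary intermediate point $z$ through a change of variables that separates the $\calR$-summand from the $\Omega^C$-summand. The inner minimization (over the variable regularizing $\calR$) reproduces the value of $(\calR \square h_\mu)$ at $z$, while the outer minimization over $z$ against $\Omega^C$ regularized by the same kernel $h_\mu$ reproduces $(\Omega^C \square h_\mu)$ evaluated at the output of the inner stage. Alignment of the optimal arguments across the two levels is supplied by the prox-decomposition identity of \cite{yu2013decomposing}, giving the stated composition.

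The principal obstacle is justifying that the prox-decomposition identity, which is stated in \cite{yu2013decomposing} for proximal operators under specific structural compatibility between $\calR$ and $\Omega^C$ (for example, nonoverlapping groups in the sparse group lasso, as flagged in the footnote preceding the statement), continues to hold for the inf-conv regularization with a general kernel $h_\mu$ in place of the Moreau--Yosida quadratic. I would therefore need to verify that, under the standing assumption that $h$ is of the separable form \eqref{eq:hseparable} with a supercoercive, generalized self-concordant $\phi$, the commutation of the outer and inner infima produced by the change of variable coincides exactly with the requirement for the chained proximal mapping to equal $\prox_g$. Under these assumptions, exactness from \propref{thm:g-properties}\ref{thm:exact} yields that no mass is lost in switching between the nested minimizations, and substitution of the optimal minimizers produces the composed identity.
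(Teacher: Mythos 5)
Your outline follows the same general route as the paper's proof---introduce the intermediate quantity $z=(\infconv{\calR}{h_\mu})(x)$, unfold the infimal convolutions via \eqref{eq:infconv-2}, and invoke exactness from Proposition \ref{thm:g-properties}\ref{thm:exact}---but it defers exactly the step that constitutes the proof. Starting from $g_s(x)=\inf_{u+v=x}\{\calR(u)+\Omega^C(u)+h_\mu(v)\}$ there is a single kernel term $h_\mu(v)$ and a single splitting variable, whereas the claimed composition $(\infconv{\Omega^C}{h_\mu})\circ(\infconv{\calR}{h_\mu})$ unfolds into a nested minimization with \emph{two} kernel evaluations and with $\calR$ and $\Omega^C$ evaluated at \emph{different} arguments. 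You never exhibit the change of variables that manufactures the second kernel term and decouples the common argument $u$; you only assert that one exists. The paper's computation does write this down explicitly: it rewrites the outer infimum subject to $u+v=z$ as an infimum of $\calR(u)+\Omega^C(u)+h_\mu(v)$ subject to $2u+v=x$ and identifies the result with $(\infconv{g}{h_\mu})(x)$. Whatever one thinks of that manipulation, it is the entire content of the argument, and your proposal leaves it as a promissory note.

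The ingredient you propose to fill the gap also cannot do the job. The prox-decomposition identity of \cite{yu2013decomposing} is a statement about proximal \emph{operators}, i.e., about arg-minimizers of $w\mapsto g(w)+\tfrac{1}{2}\norm{x-w}^2$; it says nothing about the \emph{values} of infimal convolutions with a general kernel $h_\mu$, which is what the proposition equates. ``Alignment of the optimal arguments'' via that identity therefore does not yield equality of the two value functions, and the paper's proof makes no use of \cite{yu2013decomposing}: the prox-decomposition discussion in \S\ref{sec:prox-decomp} is motivation for the result, not a lemma invoked in its proof. Finally, any complete argument has to say how the composition is to be read at all, since $(\infconv{\calR}{h_\mu})(x)$ is a scalar being fed into a function defined on $\rr^n$; your sketch inherits this issue without addressing it.
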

\begin{proof}
	The exactness of the inf-conv regularization process by \propref{thm:exact} allows to infer
	\begin{align*}
		(\infconv{\Omega^C}{h_\mu})(z) &= \inf\limits_{\substack{(u,v)\in\rr^n\times\rr^n\\u+v=z}}\left\{\Omega^C(u)+h_\mu(v)\right\}\\
		&= \inf\limits_{\substack{(u,v)\in\rr^n\times\rr^n\\2u+v=x}}\left\{\calR(u)+\Omega^C(u)+h_\mu(v)\right\}\\
		&= (\infconv{(\calR + \Omega^C)}{h_\mu})(x) = (\infconv{g}{h_\mu})(x).
	\end{align*}
	
\end{proof}

Given the smoothness properties of $\infconv{\Omega^C}{h_\mu}$ and $\infconv{\calR}{h_\mu}$, we can apply the chain rule to obtain the derivatives of their composition $\infconv{g}{h_\mu}$. Precisely, \cite[Lemma 2.1]{sun2006strong} provides sufficient conditions for the validity of the derivatives obtained via the chain rule for composite functions, which are indeed satisfied for $\infconv{g}{h_\mu}$ by our assumptions.
\section{Convergence analysis}\label{sec:convergence}
We analyze the convergence of Algorithms \ref{alg:NewtonSCOREProx} and \ref{alg:GGNSCOREProx} under the proposed smoothing framework. In view of the numerical examples considered in \Secref{sec:experiments}, we restrict our analysis to the case $2\le\nu\le3$. However, similar convergence properties are expected to hold for the general case $\nu\in \rp$, as the key bounds describing generalized self-concordant functions hold similarly for all of these cases (see, \eg, the Section 2 and concluding remark of \cite{sun2019generalized}). We define the following metric term, taking the local norm $\norm{\cdot}_x$ with respect to $g_s$:
\begin{align}\label{eq:d-metric}
	d_\nu(x,y) \coloneqq \begin{cases}
		M_g\norm{y-x} &\text{if } \nu = 2,\\
		\left(\frac{\nu}{2}-1\right)M_g\norm{y-x}_2^{3-\nu}\norm{y-x}_x^{\nu-2} &\text{if }\nu>2.
	\end{cases}
\end{align}
We introduce the notations $H_\star^{g }\equiv \hessn g_s(x^\star )$, $H_\star^{f }\equiv \hessn f(x^\star )$ and $H_\star \equiv\hessn q(x^\star )$. Recall also the notations $H_k^g\equiv \hessn g_s(x_k)$, $H_k^f\equiv \hessn f(x_k)$ and $H_k\equiv\hessn q(x_k)$ at $x_k$. Furthermore, we define the following matrices associated with any given twice differentiable function $f$:
\begin{subequations}
	\begin{align}
		\Sigma_f^{x,y} &\coloneqq \int_0^1 \left(\hessn f(x+\tau(y-x)) - \hessn f(x)\right)d\tau,\\
		\Upsilon_f^{x,y} &\coloneqq \hessn f(x)^{-1/2} \Sigma_f^{x,y} \hessn f(x)^{-1/2}.
	\end{align}
\end{subequations}
We begin by stating some useful preliminary results. The following result provides bounds on the function $g_s$ in \eqref{eq:partialsmooth-prob}.
\begin{lemma}\cite[Proposition 10]{sun2019generalized}\label{thm:g-bound0}
	Suppose that \ref{ass:p3}--\ref{ass:p4} hold. Then, given any $x,y\in \dom g$, we have
	\begin{align}
		\omega_\nu(-d_\nu(x,y))\|y-x\|_x^2 \le g_s(y) - g_s(x) - \langle\gradn g_s(x), y-x\rangle \le \omega_\nu(d_\nu(x,y))\|y-x\|_x^2,
	\end{align}
	in which, if $\nu>2$, the right-hand side inequality holds if $d_\nu(x,y) < 1$, and
	\begin{align}\label{eq:omega-nu}
		\omega_\nu(\tau) \coloneqq \begin{cases}
			\frac{\exp(\tau)-\tau-1}{\tau^2} &\text{if } \nu = 2,\\
			\frac{-\tau - \ln(1-\tau)}{\tau^2} &\text{if } \nu=3,\\
			\frac{(1-\tau)\ln(1-\tau)+\tau}{\tau^2} &\text{if } \nu=4,\\
			\left(\frac{\nu-2}{4-\nu}\right)\frac{1}{\tau}\left[\frac{\nu-2}{2(3-\nu)\tau}\left((1-\tau)\frac{2(3-\nu)}{2-\nu}-1\right)-1\right] &\text{otherwise}.
		\end{cases}
	\end{align}
\end{lemma}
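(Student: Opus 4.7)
The plan is to prove this by reducing the multivariate bound to a univariate generalized self-concordance statement and integrating twice, which is the standard Nesterov-type route adapted to the $(M_g,\nu)$ parameterization. Fix $x,y \in \dom g$ and introduce the univariate function $\varphi(t) := g_s(x + t(y-x))$ on $[0,1]$. By \ref{ass:p4}, $g_s \in \scsmooth{g} \subseteq \gsc{g}$, and by the alternative characterization noted right after Definition \ref{def:gsc-rn}, $\varphi$ inherits generalized self-concordance from $g_s$ applied with $u = v = y-x$. Writing out the chain rule gives $\varphi'(t) = \langle \gradn g_s(x+t(y-x)), y-x\rangle$, $\varphi''(t) = \|y-x\|^2_{x+t(y-x)}$, and $\varphi'''(t) = \langle \thirdn g_s(x+t(y-x))[y-x](y-x), y-x\rangle$, so Definition \ref{def:gsc-rn} yields the pointwise bound $|\varphi'''(t)| \le M_g \varphi''(t)^{\nu/2} \|y-x\|^{3-\nu}$.

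Next I would convert this third-derivative bound into two-sided control on $\varphi''(t)$ relative to $\varphi''(0) = \|y-x\|_x^2$. For $\nu \ne 2$, dividing by $\varphi''(t)^{\nu/2}$ gives $\bigl|\tfrac{d}{dt}\varphi''(t)^{1-\nu/2}\bigr| \le |1 - \nu/2| M_g \|y-x\|^{3-\nu}$; integrating from $0$ to $t$ produces explicit upper and lower envelopes for $\varphi''(t)$ depending on $t$ and on the quantity $M_g \|y-x\|^{3-\nu}\|y-x\|_x^{\nu-2}$, which is exactly $d_\nu(x,y)/(\nu/2 - 1)$ when $\nu>2$. The case $\nu = 2$ is handled analogously using $\tfrac{d}{dt}\ln \varphi''(t)$, producing the exponential form $\exp(\pm M_g\|y-x\| t)$ which matches the $\nu=2$ branch of $d_\nu$. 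The restriction $d_\nu(x,y) < 1$ when $\nu > 2$ arises precisely because the upper envelope for $\varphi''$ integrates a $(1-\tau)^{-\alpha}$ singularity that must remain finite on $[0,1]$.

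The final step is to use the Taylor identity
\begin{equation*}
g_s(y) - g_s(x) - \langle \gradn g_s(x), y-x\rangle \;=\; \int_0^1 (1-t)\varphi''(t)\,dt,
\end{equation*}
substitute the two-sided envelopes from the previous step, and evaluate the resulting double integrals. The functions $\omega_\nu$ defined in \eqref{eq:omega-nu} are exactly the closed-form values of these integrals: the $\nu=3$, $4$ formulas come from elementary antiderivatives of $(1-\tau)^{-1}$ and $(1-\tau)\ln(1-\tau)$, the $\nu=2$ formula from the exponential envelope, and the generic branch from the power-law envelope with exponent $2(3-\nu)/(2-\nu)$.

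The main obstacle is bookkeeping: tracking signs carefully (so that the lower bound uses $-d_\nu$ and the upper bound uses $+d_\nu$), verifying positivity of $\varphi''$ along the segment so the division step is legal (which follows from \ref{ass:p2} locally and can be extended by continuity or a limiting argument on $\dom g_s$), and confirming that each closed-form integral agrees with the piecewise definition of $\omega_\nu$. Since this statement is precisely Proposition 10 of \cite{sun2019generalized}, the full verification of these elementary but tedious integrations can be invoked by reference rather than reproduced in detail.
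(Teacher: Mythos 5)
The paper does not prove this lemma at all---it imports it verbatim by citation from \cite[Proposition 10]{sun2019generalized}---and your sketch correctly reconstructs the standard argument used there: restrict to the segment via $\varphi(t)=g_s(x+t(y-x))$, apply the generalized self-concordance inequality with $u=v=y-x$ to get $|\varphi'''(t)|\le M_g\varphi''(t)^{\nu/2}\|y-x\|^{3-\nu}$, integrate $\tfrac{d}{dt}\varphi''(t)^{1-\nu/2}$ (or $\tfrac{d}{dt}\ln\varphi''(t)$ for $\nu=2$) to obtain two-sided envelopes on $\varphi''$, and substitute into $\int_0^1(1-t)\varphi''(t)\,dt$ to recover the $\omega_\nu$ branches and the $d_\nu(x,y)<1$ restriction. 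This is essentially the same approach as the cited source, so nothing further is needed.
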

The next two lemmas are instrumental in our convergence analysis, and are immediate consequences of the (local) Hessian regularity of the smooth functions $f$ and $g_s$ in \eqref{eq:partialsmooth-prob}.
\begin{lemma}\cite[Lemma 1.2.4]{nesterov2018lectures}\label{thm:gradf-bound}
	For any given $x,y\in \dom f$, we have
	\begin{align}
		\norm{\gradn f(y) - \gradn f(x) - \hessn f(x)(y-x)} \le \frac{L_f}{2}\norm{y-x}^2,\\
		\abs{f(y) - f(x) - \langle\gradn f(x), y-x\rangle - \frac{1}{2}\langle\hessn f(x)(y-x),y-x\rangle} \le \frac{L_f}{6}\norm{y-x}^3.
	\end{align}
\end{lemma}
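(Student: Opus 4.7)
The plan is to invoke the fundamental theorem of calculus twice and leverage the Lipschitz continuity of $\hessn f$, which is guaranteed by assumption \ref{ass:p1} ($f\in \calC_{L_f}^{2,2}(\rr^n)$). Both inequalities follow from essentially the same integral representation trick, and the second one reuses the first.

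For the first inequality, I would write
\begin{align*}
\gradn f(y) - \gradn f(x) = \int_0^1 \hessn f(x + \tau(y-x))(y-x)\, d\tau
\end{align*}
and observe that $\hessn f(x)(y-x) = \int_0^1 \hessn f(x)(y-x)\, d\tau$. Subtracting and applying the triangle inequality under the integral sign gives
\begin{align*}
\norm{\gradn f(y) - \gradn f(x) - \hessn f(x)(y-x)} \le \int_0^1 \norm{\hessn f(x+\tau(y-x)) - \hessn f(x)}\cdot \norm{y-x}\, d\tau.
\end{align*}
The Lipschitz bound $\norm{\hessn f(x+\tau(y-x)) - \hessn f(x)} \le L_f\tau\norm{y-x}$ then yields the factor $L_f/2$ after integrating $\tau$ from $0$ to $1$.

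For the second inequality, I would start from $f(y) - f(x) = \int_0^1 \langle \gradn f(x + \tau(y-x)), y-x\rangle\, d\tau$, subtract $\langle\gradn f(x), y-x\rangle$, and recognize $\tfrac{1}{2}\langle\hessn f(x)(y-x),y-x\rangle = \int_0^1 \tau\langle \hessn f(x)(y-x), y-x\rangle\, d\tau$. Combining these into a single integral and applying Cauchy-Schwarz produces
\begin{align*}
\abs{f(y) - f(x) - \langle\gradn f(x),y-x\rangle - \tfrac{1}{2}\langle \hessn f(x)(y-x),y-x\rangle} \le \int_0^1 \norm{\gradn f(x+\tau(y-x)) - \gradn f(x) - \tau\hessn f(x)(y-x)}\cdot\norm{y-x}\, d\tau.
\end{align*}
The integrand is exactly the left-hand side of the first inequality evaluated at the pair $(x,\, x+\tau(y-x))$, so applying that first bound produces $L_f\tau^2\norm{y-x}^2/2$, and integrating $\tau^2$ over $[0,1]$ contributes the remaining factor $1/3$, giving the target constant $L_f/6$.

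There is really no serious obstacle here since this is a classical identity; the only care needed is in rewriting $\tfrac{1}{2}\langle\hessn f(x)(y-x),y-x\rangle$ as a $\tau$-weighted integral so that the first inequality can be reused term-by-term inside the second, and in tracking the correct rescaling (the $\tau$-factor) when specializing the first bound to the segment from $x$ to $x+\tau(y-x)$.
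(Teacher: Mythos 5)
Your argument is correct and complete: both integral representations are valid, the $\tau$-rescaling when reusing the first bound along the segment is handled properly, and the constants $L_f/2$ and $L_f/6$ come out right. The paper does not prove this lemma but simply cites \cite[Lemma 1.2.4]{nesterov2018lectures}, and your proof is precisely the standard textbook argument behind that citation.
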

\begin{lemma}\cite[Lemma 2]{sun2019generalized}\label{thm:gradg-bound}
	For any given $x,y\in \dom g$, $\Upsilon_{g_s}^{x,y}$ satisfies
	\begin{align*}
		\|\Upsilon_{g_s}^{x,y}\| \le R_\nu(d_\nu(x,y))d_\nu(x,y),
	\end{align*}
	where, for $\tau\in [0,1)$, $R_\nu(\tau)$ is defined by
	\begin{align}\label{eq:r-nu}
		R_\nu(\tau) \coloneqq \begin{cases}
			\left(\frac{3}{2}+\frac{\tau}{3}\right)\exp(\tau) & \text{if }\nu=2,\\
			\frac{1-\left(1-\tau\right)^{\frac{4-\nu}{\nu-2}}-\left(\frac{4-\nu}{\nu-2}\right)\tau\left(1-\tau\right)^{\frac{4-\nu}{\nu-2}}}{\left(\frac{4-\nu}{\nu-2}\right)\tau^2\left(1-\tau\right)^{\frac{4-\nu}{\nu-2}}} & \text{if } \nu\in(2,3].
		\end{cases}
	\end{align}
\end{lemma}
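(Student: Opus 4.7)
The plan is to reduce the stated bound to an integrated Hessian-scaling inequality along the segment $[x,y]$. First, write $\Sigma_{g_s}^{x,y} = \int_0^1 \bigl[\hessn g_s(x_\tau) - \hessn g_s(x)\bigr]d\tau$ with $x_\tau \coloneqq x + \tau(y-x)$, and conjugate by $H^{-1/2}$, where $H \coloneqq \hessn g_s(x) \in \spd{n}$ (positive definiteness follows from the construction of $g_s$ via a supercoercive generalized-self-concordant kernel, \cf~\propref{thm:hmugsc} and \propref{thm:g-properties}). Pulling the bounded linear conjugation inside the integral and applying the triangle inequality in operator norm yields
\begin{align*}
    \|\Upsilon_{g_s}^{x,y}\| \le \int_0^1 \bigl\|H^{-1/2}\hessn g_s(x_\tau)H^{-1/2} - I\bigr\|\, d\tau,
\end{align*}
so the problem is reduced to pointwise-in-$\tau$ control of the normalized Hessian.

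Next I would derive the standard Hessian sandwich bounds implied by \defnref{def:gsc-rn}. Fix $u \in \rr^n$, set $\psi_u(t) \coloneqq \langle \hessn g_s(x + tv)u, u\rangle$ along $v \coloneqq y-x$, differentiate in $t$, and apply \defnref{def:gsc-rn} to obtain the scalar inequality $|\psi_u'(t)| \le M_g\,\psi_u(t)\,\|v\|_{x+tv}^{\nu-2}\|v\|^{3-\nu}$. This is coupled to the analogous inequality for $\psi_v$, which supplies the transport estimate $(1-\tau d_\nu(x,y))^{2/(\nu-2)} \le \psi_v(\tau)/\psi_v(0) \le (1-\tau d_\nu(x,y))^{-2/(\nu-2)}$ needed to close the $\psi_u$ ODE. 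Integrating the coupled system produces, for $\nu=2$, the exponential sandwich $e^{-\tau d_\nu(x,y)}H \preceq \hessn g_s(x_\tau) \preceq e^{\tau d_\nu(x,y)}H$, and for $\nu \in (2,3]$ the polynomial sandwich $(1-\tau d_\nu(x,y))^{2/(\nu-2)}H \preceq \hessn g_s(x_\tau) \preceq (1-\tau d_\nu(x,y))^{-2/(\nu-2)}H$, valid whenever $\tau d_\nu(x,y) < 1$. Consequently, $\|H^{-1/2}\hessn g_s(x_\tau)H^{-1/2} - I\|$ is bounded by the larger upper deviation: $e^{\tau d_\nu(x,y)} - 1$ for $\nu=2$ and $(1-\tau d_\nu(x,y))^{-2/(\nu-2)} - 1$ for $\nu \in (2,3]$.

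The conclusion follows by integrating these pointwise bounds over $\tau \in [0,1]$. For $\nu \in (2,3]$, setting $\beta \coloneqq (4-\nu)/(\nu-2)$ and computing $\int_0^1 \bigl[(1-\tau d)^{-2/(\nu-2)} - 1\bigr]d\tau$ in closed form, then rearranging by multiplying numerator and denominator by $(1-d)^{\beta}$, reproduces exactly the quotient $R_\nu(d)\,d$ displayed in \eqref{eq:r-nu}. For $\nu=2$, one integrates $e^{\tau d} - 1$ to obtain $(e^d - d - 1)/d$, which is dominated by $(3/2 + d/3)\exp(d)\,d$, since the ratio of the tighter bound to the stated $R_2$-form is uniformly at most $1/2$ on $d \ge 0$.

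The main obstacle I anticipate is step two: the generalized self-concordance inequality mixes the local $\|\cdot\|_{x+tv}$ and Euclidean $\|\cdot\|$ norms, so the ODE for $\psi_u(t)$ is not autonomous and the companion ODE for $\psi_v(t)$ must be solved first to supply the norm-transport estimate. Once this coupling is in place, the remaining algebra — integrating the upper deviation and recognizing the compact rational-exponential form of $R_\nu$ for $\nu \in (2,3]$ — is routine but tedious.
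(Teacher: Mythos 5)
Your proposal is correct, but it cannot be compared against a proof in the paper because the paper offers none: Lemma \ref{thm:gradg-bound} is imported verbatim from Sun and Tran-Dinh (cited as [Lemma 2, sun2019generalized]) and used as a black box. What you have written is essentially a reconstruction of the argument in that cited source, and the computations check out. The reduction $\|\Upsilon_{g_s}^{x,y}\| \le \int_0^1 \|\hessn g_s(x)^{-1/2}\hessn g_s(x_\tau)\hessn g_s(x)^{-1/2} - I\|\,d\tau$ is valid; the coupled scalar ODEs for $\psi_u$ and $\psi_v$ give exactly the sandwich $(1-\tau d)^{2/(\nu-2)}\hessn g_s(x)\preceq \hessn g_s(x_\tau)\preceq(1-\tau d)^{-2/(\nu-2)}\hessn g_s(x)$ with $d\coloneqq d_\nu(x,y)$, using that $d_\nu(x,x_\tau)=\tau d$; the upper deviation dominates the lower one since $b-1\ge 1-b^{-1}$ for $b\ge1$; and with $\beta\coloneqq\frac{4-\nu}{\nu-2}$ one indeed gets $\int_0^1[(1-\tau d)^{-2/(\nu-2)}-1]\,d\tau=\frac{(1-d)^{-\beta}-1-\beta d}{\beta d}$, which after multiplying through by $(1-d)^{\beta}$ equals $R_\nu(d)\,d$ exactly for $\nu\in(2,3]$. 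For $\nu=2$ your integral gives the sharper value $(e^{d}-d-1)/d\le \tfrac{d}{2}e^{d}\le R_2(d)\,d$, so the stated (looser) constant is recovered by relaxation; your claimed ratio bound of $1/2$ is in fact conservative. Two small points worth making explicit if this were to be written out: the argument needs $\hessn g_s(x)\succ 0$ for $\Upsilon_{g_s}^{x,y}$ to be defined (guaranteed here by the choice of kernel and assumption \ref{ass:p2} near the points of interest), and for $\nu>2$ the whole chain requires $d_\nu(x,y)<1$, which is implicit in the lemma's restriction of $R_\nu$ to $\tau\in[0,1)$.
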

\paragraph*{Global convergence.} We establish a global convergence result for the proximal quasi-Newton scheme \eqref{eq:proxnewton-general}. Specifically, we show that the iterates produced by this scheme decrease the objective function value in \eqref{eq:prob} when the step lengths are chosen according to \eqref{eq:steplength} with $\alpha_k\in(0,1]$. Consequently, global convergence follows.

Let us define the following mapping:
\begin{align}\label{eq:map-G}
	\mapg{x_k} \coloneqq \frac{1}{\bar{\alpha}_k}H_k\left(x_k - \prox_{\alpha_k g}^{H_k^g}(x_k - \bar{\alpha}_kH_k^{-1}\gradn q(x_k))\right).
\end{align}
Clearly, \eqref{eq:proxnewton-general} is equivalent to
\begin{align}\label{eq:equiv-proxnewton-general}
	x_{k+1} = x_k - \bar{\alpha}_kH_k^{-1}\mapg{x_k}.
\end{align} 
Using \eqref{eq:optimality-conditions} with $Q_k = H_k^g$ and the definition of the (scaled) proximal operator, $\mapg{x_k}$ satisfies
\begin{align}\label{eq:mapg-optimality}
	\mapg{x_k} \in \gradn q(x_k) + \partial g(x_k - \bar{\alpha}_kH_k^{-1}\mapg{x_k}).
\end{align}
Moreover, $\mapg{\bar{x}}= 0$ if and only if $\bar{x}$ solves problem \eqref{eq:partialsmooth-prob}.
\begin{proposition}\label{thm:sufficient-decrease}
	\sloppy Suppose that \ref{ass:p1}, \ref{ass:p3} and \ref{ass:p4} hold for \eqref{eq:partialsmooth-prob}. Let $\{x_k\}$ be the sequence generated by scheme \eqref{eq:proxnewton-general} for problem \eqref{eq:partialsmooth-prob} and satisfying $\omega_\nu(d_\nu(x_{k+1},x_k))\le \frac{1}{2}$, where $\omega_\nu$ and $d_\nu$ are respectively defined by \eqref{eq:omega-nu} and \eqref{eq:d-metric}. Define $\varepsilon_k^\mu(y)\coloneqq(L_f/6)\norm{y-x_k}^3$, and let $\bar{\alpha}_k$ be specified by \eqref{eq:steplength} with $\alpha_k\in(0,1]$. Then $\{x_k\}$ satisfies
	\begin{align}
		\calL(x_{k+1}) \le \calL(x_k) - \varepsilon_k^\mu(x_{k+1}).
	\end{align}
\end{proposition}
\begin{proof}
	Letting $y = x_k - \bar{\alpha}_kH_k^{-1}G_{\alpha_kg}(x_k)$ and $x=x_k$ in \lemref{thm:gradf-bound}, where $G_{\alpha_kg}$ is defined by \eqref{eq:map-G}, we have
	\begin{align}\label{eq:f-upper}
		f(x_{k+1}) &\le f(x_k) - \bar{\alpha}_k(H_k^{-1}\gradn f(x_k))^\top\mapg{x_k} + \frac{\bar{\alpha}_k^2}{2}\norm{H_k^{-1}\mapg{x_k}}_{H_k^f}^2\nonumber\\& + \quad \frac{\bar{\alpha}_k^3L_f}{6}\norm{H_k^{-1}\mapg{x_k}}^3.
	\end{align}
	Using $\calL(x_{k+1})\coloneqq f(x_{k+1}) + g(x_{k+1})$ and \eqref{eq:f-upper}, we get
	\begin{align}\label{eq:l-upper-1}
		\calL(x_{k+1}) &\le f(x_k) - \bar{\alpha}_k(H_k^{-1}\gradn f(x_k))^\top\mapg{x_k} + \frac{\bar{\alpha}_k^2}{2}\norm{H_k^{-1}\mapg{x_k}}_{H_k^f}^2\nonumber\\
		&\quad + \frac{\bar{\alpha}_k^3L_f}{6}\norm{H_k^{-1}\mapg{x_k}}^3 + g(x_k - \bar{\alpha}_kH_k^{-1}\mapg{x_k})\nonumber\\
		&\stackrel{\lemref{thm:gradf-bound}}{\le} f(z) - \langle\gradn f(x_k), z-x_k\rangle - \frac{1}{2}\norm{z-x_k}_{H_k^f}^2 + \frac{L_f}{6}\norm{z-x_k}^3\nonumber\\
		&\quad - \bar{\alpha}_k(H_k^{-1}\gradn f(x_k))^\top\mapg{x_k} + \frac{\bar{\alpha}_k^2}{2}\norm{H_k^{-1}\mapg{x_k}}_{H_k^f}^2\nonumber\\
		&\quad + \frac{\bar{\alpha}_k^3L_f}{6}\norm{H_k^{-1}\mapg{x_k}}^3 + g(x_k - \bar{\alpha}_kH_k^{-1}\mapg{x_k}).
	\end{align}
	In the above, we used the lower bound in \lemref{thm:gradf-bound} on $f(z)$. By the convexity of $g$, we have $g(z) - g(x_{x_{k+1}}) \ge v^\top(z-x_{k+1})$ for all $v\in \partial g(x_{k+1})$. Now since from \eqref{eq:mapg-optimality}, we have $\mapg{x_k} - \gradn q(x_k)\in \partial g(x_k - \bar{\alpha}_kH_k^{-1}\mapg{x_k})$, and noting that $\gradn q - \gradn f = \gradn g_s$, \eqref{eq:l-upper-1} gives
	\begin{align}\label{eq:l-upper-2}
		&\calL(x_{k+1}) \le f(z) + g(z) - \langle\gradn f(x_k), z-x_k\rangle - \frac{1}{2}\norm{z-x_k}_{H_k^f}^2 + \frac{L_f}{6}\norm{z-x_k}^3\nonumber\\
		&\quad - \bar{\alpha}_k(H_k^{-1}\gradn f(x_k))^\top\mapg{x_k} + \frac{\bar{\alpha}_k^2}{2}\norm{H_k^{-1}\mapg{x_k}}_{H_k^f}^2 + \frac{\bar{\alpha}_k^3L_f}{6}\norm{H_k^{-1}\mapg{x_k}}^3\nonumber\\
		&\quad - (\mapg{x_k} - \gradn q(x_k))^\top(z - x_k + \bar{\alpha}_kH_k^{-1}\mapg{x_k})\nonumber\\
		&\le \calL(z) - \langle\gradn f(x_k), z-x_k\rangle - \frac{1}{2}\norm{z-x_k}_{H_k^f}^2 - \bar{\alpha}_k(H_k^{-1}\gradn f(x_k))^\top\mapg{x_k}\nonumber\\
		&\quad + \frac{\bar{\alpha}_k^2}{2}\norm{H_k^{-1}\mapg{x_k}}_{H_k^f}^2 + \frac{L_f}{6}\norm{z-x_k}^3 + \frac{\bar{\alpha}_k^3L_f}{6}\norm{H_k^{-1}\mapg{x_k}}^3\nonumber\\
		&\quad - \mapg{x_k}^\top(z - x_k) - \frac{\bar{\alpha}_k^2}{2}\langle H_k^{-1}\mapg{x_k},\mapg{x_k}\rangle\nonumber\\
		&\quad - \gradn q(x_k)^\top(z - x_k + \bar{\alpha}_kH_k^{-1}\mapg{x_k}) \nonumber\\
		&= \calL(z) + \mapg{x_k}^\top(x_k - z) + \frac{\bar{\alpha}_k^2}{2} \langle H_k^{-1}(H_k^fH_k^{-1} - I_n)\mapg{x_k}, \mapg{x_k}\rangle\nonumber\\
		&\quad + \gradn g_s(x_k)^\top(z-x_k) + \bar{\alpha}_k(H_k^{-1}\gradn g_s(x_k))^\top\mapg{x_k} - \frac{1}{2}\norm{z-x_k}_{H_k^f}^2\nonumber\\
		&\quad + \frac{L_f}{6}\norm{z-x_k}^3 + \frac{\bar{\alpha}_k^3L_f}{6}\norm{H_k^{-1}\mapg{x_k}}^3,
	\end{align}
	\sloppy where the second inequality results from the fact that $\langle H_k^{-1}\mapg{x_k},\mapg{x_k}\rangle \in \rnn$ and $\bar{\alpha}_k \ge \bar{\alpha}_k^2$ for $0<\bar{\alpha}_k\le 1$. Now set $z=x_k$ in \eqref{eq:l-upper-2} and use the following relations from \eqref{eq:equiv-proxnewton-general}:
	\begin{align*}
		\bar{\alpha}_kH_k^{-1}\mapg{x_k} = x_k - x_{k+1},\quad \mapg{x_k} = \frac{1}{\bar{\alpha}_k}H_k(x_k - x_{k+1}).
	\end{align*}
	We get
	\begin{align}
		\calL(x_{k+1}) &\le \calL(x_k) + \frac{\bar{\alpha}_k^2}{2} \langle H_k^{-1}(H_k^fH_k^{-1} - I_n)\mapg{x_k}, \mapg{x_k}\rangle\nonumber\\
		&\quad + \bar{\alpha}_k(H_k^{-1}\gradn g_s(x_k))^\top\mapg{x_k} + \frac{\bar{\alpha}_k^3L_f}{6}\norm{H_k^{-1}\mapg{x_k}}^3\nonumber\\
		\quad &= \calL(x_k) - \left[\langle \gradn g_s(x_k),x_{k+1} - x_k\rangle + \frac{1}{2}\langle H_k^g(x_{k+1} - x_k), x_{k+1} - x_k\rangle\nonumber\right.\\&\quad +\left. \frac{L_f}{6}\norm{x_{k+1} - x_k}^3\right].\label{eq:l-b0}
	\end{align}
	Now, let us define the following cubic-regularized upper quadratic model of $g_s$ near $x_k$ (\cf~\cite{nesterov2006cubic}):
	\begin{align*}
		\hat{g}_s(y) \coloneqq g_s(x_k) + \langle \gradn g_s(x_k),y - x_k\rangle + \frac{1}{2}\langle H_k^g(y - x_k), y - x_k\rangle + \frac{L_f}{6}\norm{y - x_k}^3,
	\end{align*}
	for $y\in\rr^n$ and $L_f$ given by \ref{ass:p1}. Then, using \lemref{thm:g-bound0} with $x=x_k$, we have
	\begin{align}
		\gps(y)-\hat{g}_s(y) \le \omega_\nu(d_\nu(y,x_k))\norm{y-x_k}_x^2 - \frac{1}{2}\langle H_k^g(y - x_k), y - x_k\rangle - \frac{L_f}{6}\norm{y - x_k}^3.\label{eq:gs-b0}
	\end{align}
	Next, using \eqref{eq:gs-b0} with $y=x_{k+1}$, \eqref{eq:l-b0} gives
	\begin{align*}
		\calL(x_{k+1}) &\le \calL(x_k) + \gps(x_{k+1}) - \hat{g}_s(x_{k+1})\\
		&\le \calL(x_k) + \left(\omega_\nu(d_\nu(x_{k+1},x_k))-\frac{1}{2}\right)\norm{x_{k+1}-x_k}_x^2 - \frac{L_f}{6}\norm{x_{k+1} - x_k}^3,
	\end{align*}
	which proves the result.
\end{proof}
A straightforward implication of \propref{thm:sufficient-decrease} is that the sequence $\{\calL(x_k)\}$ is monotonically decreasing if $\bar{\delta}_k \coloneqq x_{k+1} - x_k \ne 0$. Consider the set of indices
\begin{align}\label{eq:subsequence-idx}
	\calK_S \coloneqq \set{k \text{ such that } x_k \in S \text{ and } S \text{ is a subsequence of } \{x_k\}}.
\end{align}
Then, for all $k_j\in \calK_S$, $\{x_{k_j}\}$ converges to some $x^\star $.
\begin{lemma}\label{thm:dist-zero}
	Let an iterate $x_k$ be generated by the scheme \eqref{eq:proxnewton-general} for problem \eqref{eq:partialsmooth-prob}. Then, $x_k$ is a stationary point of $\calL$ if and only if $\bar{\delta}_k=0$.
\end{lemma}
\begin{proof}
	The statement holds true by our characterization of the optimiality conditions in \eqref{eq:optimality-conditions} with $Q_k = H_k^g$.
\end{proof}
\begin{theorem}\label{thm:stationarity}
	Let $\{x_k\} \subset \rr^n$ in \propref{thm:sufficient-decrease}. Then every limit point $x^\star $ of $\{x_k\}$ at which \eqref{eq:optimality-conditions} holds with $Q_k = H_k^g$ is a stationary point of the objective function $\calL$ in problem \eqref{eq:prob}.
\end{theorem}
\begin{proof}
	\propref{thm:sufficient-decrease} implies $\{\calL(x_k)\}$ is non-increasing and bounded below. Hence, it converges to a finite value $\calL^\star $. Consequently (and from the proof of \propref{thm:epi-minimizer}), the sequence of iterates $\{x_k\}$ generated from \eqref{eq:proxnewton-general} is bounded, and every limit point exists. Let $x^\star $ be a limit point of $\{x_k\}$, and now consider all $k_j \in \calK_S$ with $\{x_{k_j}\} \to x^\star $, where $\calK_S$ is defined by \eqref{eq:subsequence-idx}. The relation in \eqref{eq:grad-consistency} implies inclusion in both directions, and hence since $g_s \epiarrow g$, if $\{x_{k_j}\}$ is such that
	\begin{align}
		\limsup_{\substack{x_{k_j}\to x^\star \\\mu\downarrow 0}} \gradn g_s(x_{k_j};\mu) \to 0,\label{eq:gs-stationarity}
	\end{align}
	one finds $x^\star $ is a stationary point of $g$ \cite{burke2013epi}. For any suitably chosen fixed $\mu \in \rp$, it suffices that both properties \eqref{eq:grad-consistency} and \eqref{eq:gs-stationarity} hold only approximately with respect to \propref{thm:epi-minimizer} as they pertain only to the smooth part of the problem. Taking the limit of \eqref{eq:optimality-conditions} as $k_j\to \infty$ with $Q_k = H_k^g$, the result follows from \lemref{thm:dist-zero}. Precisely, $\bar{\delta}_{k_j} \to 0$, and hence all the limit points of $\{x_k\}$ are stationary points of $\calL$.
\end{proof}

\paragraph*{How to choose $\alpha_k$.}
In previous results, we did not specify a particular way to choose $\alpha_k$. Our algorithms converge for any value of $\alpha_k\in(0,1]$. Compared to the step length selection rule proposed in \cite{sun2019generalized}, for instance, our approach and analysis do not directly rely on the actual value of $\nu$ in the choice of both $\bar{\alpha}_k$ and $\alpha_k$. Indeed, in the context of minimizing a function $g_s\in\gsc{g}$, an optimal choice for $\bar{\alpha}_k$, in view of \cite{sun2019generalized}, corresponds to setting
\begin{align*}
	\alpha_k = \begin{cases}
		\frac{\ln(1+d_k)(1 + M\eta_k)}{d_k} &\text{if } \nu=2,\\
		\frac{2(1+M_g\eta_k)}{2+M_g\eta_k} &\text{if } \nu=3,
	\end{cases}
\end{align*}
where $d_k\coloneqq M_g\|\hessn H_k^{g^{-1}}\gradn g_s(x_k)\|$ and in each case, it can be shown that $\bar{\alpha}_k\in(0,1)$. However, choosing $\alpha_k$ this way does not guarantee certain theoretical bounds in the context of the framework studied in this work, especially for $\nu=2$. We therefore propose to leave $\alpha_k$ as a hyperparameter that must satisfy $0<\alpha_k\equiv\alpha\le1$. This however provides the freedom to exploit specific properties about the function $f$, when they are known to hold. One of such properties is the global Lipschitz continuity of $\gradn f$, where supposing the Lipschitz constant $L$ is known, one may set
\begin{align*}
	\alpha_k = \min\{1/L,1\}.
\end{align*}
\paragraph*{Local convergence.}
We next discuss the local convergence properties of Algorithms \ref{alg:NewtonSCOREProx} and \ref{alg:GGNSCOREProx}. In our discussion, we take the local norm $\norm{\cdot}_x$ (and its dual) with respect to $g_s$, and the standard Euclidean norm $\norm{\cdot}$ with respect to the (local) Euclidean ball $\mathcal{B}_{r_0}(\cdot) \subset \calE_r(\cdot)$. We also remark that, by definition, $\omega_\nu$ is a strictly increasing function.
\begin{theorem}\label{thm:alg1}
	Suppose that \ref{ass:p1}--\ref{ass:p4} hold, and let $x^\star$ be an optimal solution of \eqref{eq:partialsmooth-prob}. Let $\{x_k\}$ be the sequence of iterates generated by Algorithm \ref{alg:NewtonSCOREProx} and define $\lambda_k\coloneqq 1+M_g\omega_\nu(-d_\nu(x^\star,x_k))\norm{x_k-x^\star}_{x_k}$, where $\omega_\nu$ is defined by \eqref{eq:omega-nu}. Then starting from a point $x_0 \in \calE_r(x^\star)$, if $d_\nu(x^\star,x_k)<1$ with $d_\nu$ defined by \eqref{eq:d-metric}, the sequence $\{x_k\}$ satisfies
	\begin{align}\label{eq:conv-rate-xk-alg1}
		\norm{x_{k+1}-x^\star}_{x^\star} \le \vartheta_k\norm{x_k-x^\star} + R_k\norm{x_k-x^\star}_{x^\star} + \frac{L_f}{2\sqrt{\rho_2}}\norm{x_k-x^\star}^2,
	\end{align}
	where $\vartheta_k \coloneqq (L_1+L_2)(\lambda_k-\alpha_k)/(\lambda_k\sqrt{\rho})$, $\alpha_k\in(0,1]$, $R_k\coloneqq R_\nu(d_\nu(x^\star,x_k))d_\nu(x^\star,x_k)$ with $R_\nu$ defined by \eqref{eq:r-nu}.
\end{theorem}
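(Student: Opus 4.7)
\textbf{Proof plan for Theorem \ref{thm:alg1}.}

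The plan is to exploit the fixed-point characterization of the update together with the local regularity results \lemref{thm:gradf-bound} and \lemref{thm:gradg-bound} for $f$ and $g_s$. First I would write both $x_{k+1}$ and $x^*$ as images of the same scaled proximal operator $\prox_{\alpha g}^{H_g}$. The iterate is given by Algorithm \ref{alg:NewtonSCOREProx} as $x_{k+1}=\prox_{\alpha g}^{H_g}(x_k-\bar{\alpha}_k H_k^{-1}\gradn q(x_k))$, and since the optimality condition \eqref{eq:optimality} at $x^*$ gives $-\gradn q(x^*)\in\partial g(x^*)$, one may verify directly from the definition of the scaled proximal operator that $x^*=\prox_{\alpha g}^{H_g}(x^*-\alpha H_g^{-1}\gradn q(x^*))$. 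Invoking the nonexpansiveness \eqref{eq:nonexpansive} then yields
\begin{equation*}
\|x_{k+1}-x^*\|_{H_g}\le\bigl\|(x_k-x^*)-\bar{\alpha}_k H_k^{-1}\gradn q(x_k)+\alpha H_g^{-1}\gradn q(x^*)\bigr\|_{H_g}^{*}.
\end{equation*}

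Next I would expand the right-hand side by adding and subtracting $\bar{\alpha}_k H_k^{-1}(H^*(x_k-x^*))$, so as to split the expression into three pieces: a ``Newton-type residual'' of the form $H_k^{-1}\bigl(H_k-\bar{\alpha}_k H^*\bigr)(x_k-x^*)$; a piece encoding the $g_s$-gradient error $H_k^{-1}\bigl(\gradn g_s(x_k)-\gradn g_s(x^*)-H_g^*(x_k-x^*)\bigr)$; and a piece encoding the $f$-gradient error $H_k^{-1}\bigl(\gradn f(x_k)-\gradn f(x^*)-H_f^*(x_k-x^*)\bigr)$. The second piece is bounded via \lemref{thm:gradg-bound} using $\|\Upsilon_{g_s}^{x^*,x_k}\|\le R_\nu(d_\nu(x^*,x_k))d_\nu(x^*,x_k)\eqqcolon R_k$ and an integral representation of $\gradn g_s(x_k)-\gradn g_s(x^*)$ in terms of $\hessn g_s$ evaluated along the segment $[x^*,x_k]$; this yields the middle term $R_k\|x_k-x^*\|_{x^*}$ after passing to the local norm at $x^*$. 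The third piece is bounded via \lemref{thm:gradf-bound} by $(L_f/2)\|x_k-x^*\|^2$, and combined with the lower bound $\hessn g_s(x^*)\succeq\rho I_n$ from \ref{ass:p2}, this produces the $(L_f/(2\sqrt\rho))\|x_k-x^*\|^2$ contribution.

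The first (linear) piece is the most delicate: after bringing it into the $\|\cdot\|_{x^*}$ norm, one obtains a factor $\|H_k-\bar{\alpha}_k H^*\|/\sqrt\rho$ multiplying $\|x_k-x^*\|$. Using \ref{ass:p2}, $\|H_k\|,\|H^*\|\le L+L_0$, and the structure of the damped step \eqref{eq:steplength} with $\bar{\alpha}_k=\alpha_k/(1+M_g\eta_k)$, the ratio $(1-\bar{\alpha}_k/\alpha_k)$ equals $(1-1/(1+M_g\eta_k))=M_g\eta_k/(1+M_g\eta_k)$. A further application of \lemref{thm:g-bound0} bounds $\eta_k=\|\gradn g_s(x_k)\|_{x_k}^*$ by $\omega_\nu(-d_\nu(x^*,x_k))\|x_k-x^*\|_{x_k}$ (invoking the lower bound of \lemref{thm:g-bound0} together with the optimality of $x^*$), so that $1+M_g\eta_k\le\lambda_k$. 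This yields the multiplicative constant $(L+L_0)(\lambda_k-\alpha_k)/(\lambda_k\sqrt\rho)=\vartheta_k$ on the first piece, producing exactly the form \eqref{eq:conv-rate-xk-alg1}.

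The hard part I anticipate is the alignment in the last step: namely, identifying the damped step $\bar{\alpha}_k$ appearing in the algorithm with the coefficient $\lambda_k$ of the theorem, and simultaneously carrying out the norm conversions between Euclidean, $H_g$-weighted, and $H_g^*$-weighted metrics with tight constants. This needs the bound on $\eta_k$ just described, and the requirement $d_\nu(x^*,x_k)<1$ in the hypothesis is precisely what guarantees the validity of \lemref{thm:gradg-bound} and $\omega_\nu(-d_\nu(x^*,x_k))$, so the constants remain finite throughout the derivation.
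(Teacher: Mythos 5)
Your proposal follows essentially the same route as the paper's proof: both write $x_{k+1}$ and $x^*$ as images of the scaled proximal operator, invoke the nonexpansiveness property \eqref{eq:nonexpansive}, split the resulting dual-norm expression into a damping/linear piece (yielding $\vartheta_k$ via the relation between $\eta_k$ and $\lambda_k$), a $g_s$-Taylor-remainder piece bounded by Lemma~\ref{thm:gradg-bound} (yielding $R_k$), and an $f$-Taylor-remainder piece bounded by Lemma~\ref{thm:gradf-bound} together with $\|H_g^{*-1/2}\|\le 1/\sqrt{\rho}$ from \ref{ass:p2} (yielding the $L_f/(2\sqrt{\rho})$ term). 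The only divergence is bookkeeping: the paper's operator $E_{x^*}$ keeps the single matrix $H^*$ in both proximal arguments and factors $(1-\bar{\alpha}_k)$ out of the gradient difference $\gradn q(x_k)-\gradn q(x^*)$, whereas you carry mismatched prefactors $H_k^{-1}$ and $H_g^{-1}$ that must still be reconciled, but the key lemmas and the resulting constants are identical.
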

\begin{proof}
	The iterative process of Algorithm \ref{alg:NewtonSCOREProx} is given by
	\begin{align*}
		x_{k+1} = \prox_{\alpha_k g}^{H_k^g} (x_k - \bar{\alpha}_k\hessn q(x_k)^{-1}\gradn q(x_k)).
	\end{align*}
	In terms of $E_{\bar{x}}$ and $\xi_{\bar{x}}(Q_k,\cdot)$ with $Q_k\equiv H_k^g$, and using the definition of $q$, we have
	\begin{align}
		&\norm{x_{k+1}-x^\star }_{x^\star } = \norm{\prox_{\alpha_k g}^{H_\star^{g }}(E_{x^\star }(x_k) + \xi_{x^\star }(Q_k,x_{k+1})) - \prox_{\alpha_k g}^{H_\star^{g }}(E_{x^\star }(x^\star ))}_{x^\star }\nonumber\\
		&\stackrel{\eqref{eq:nonexpansive}}{\le} \norm{E_{x^\star }(x_k) - E_{x^\star }(x^\star ) + \xi_{x^\star }(Q_k,x_{k+1})}_{x^\star }^{\diamond}\nonumber\\
		&= \norm{H_\star x_k - \bar{\alpha}_k \gradn q(x_k) - H_\star x^\star  + \bar{\alpha}_k q(x^\star )}_{x^\star }^{\diamond}\nonumber\\
		&= \norm{\gradn q(x^\star ) - \gradn q(x_k) + (1-\bar{\alpha}_k)(\gradn q(x_k) - \gradn q(x^\star )) + H_\star (x_k - x^\star )}_{x^\star }^{\diamond}\nonumber\\
		&\le  \norm{\gradn q(x_k) - \gradn q(x^\star ) - H_\star (x_k - x^\star )}_{x^\star }^{\diamond} + (1-\bar{\alpha}_k)\norm{\gradn q(x_k) - \gradn q(x^\star )}_{x^\star }^{\diamond}\nonumber\\
		&\le  \norm{\gradn f(x_k) - \gradn f(x^\star ) - H_\star^{f }(x_k - x^\star )}_{x^\star }^{\diamond} + \norm{\gradn g_s(x_k) - \gradn g_s(x^\star ) - H_\star^{g }(x_k - x^\star )}_{x^\star }^{\diamond} \nonumber\\&\quad + (1-\bar{\alpha}_k)\left(\norm{\gradn f(x_k) - \gradn f(x^\star )}_{x^\star }^{\diamond} + \norm{\gradn g_s(x_k) - \gradn g_s(x^\star )}_{x^\star }^{\diamond}\right). \label{eq:xk-bound}
	\end{align}
	To estimate $\|\gradn f(x_k) - \gradn f(x^\star ) - H_\star^{f }(x_k - x^\star )\|_{x^\star }^{\diamond}$, we note that for $v\in\rr^n$, $\|v\|_{x^\star }^{\diamond} \equiv \|H_k^{g^{\star -\frac{1}{2}}}v\|$ since we take the dual norm with respect to $g_s$. Now, using \ref{ass:p2}, we get that the matrix $H_\star^{g }$ is positive definite and
	\begin{align}\label{eq:hg-bound}
		\|H_k^{g^{\star -\frac{1}{2}}}\| \le \frac{1}{\sqrt{\rho_2}}.
	\end{align}
	Consequently, we have
	\begin{align*}
		\norm{\gradn f(x_k) - \gradn f(x^\star ) - H_\star^{f }(x_k - x^\star )}_{x^\star }^{\diamond} &= \norm{H_k^{g^{\star {-\frac{1}{2}}}}\left(\gradn f(x_k) - \gradn f(x^\star ) - H_\star^{f }(x_k - x^\star )\right)}\\
		&\le \|H_k^{g^{\star {-\frac{1}{2}}}}\|\norm{\gradn f(x_k) - \gradn f(x^\star ) - H_\star^{f }(x_k - x^\star )}\\
		&\stackrel{\lemref{thm:gradf-bound}}{\le} \frac{L_f\|x_k - x^\star \|^2}{2\sqrt{\rho_2}}.
	\end{align*}
	To estimate $\|\gradn g_s(x_k) - \gradn g_s(x^\star ) - H_\star^{g }(x_k - x^\star )\|_{x^\star }^{\diamond}$, we can apply \lemref{thm:gradg-bound} as in the proof of \cite[Theorem 5]{sun2019generalized}, and get
	\begin{align*}
		\norm{\gradn g_s(x_k) - \gradn g_s(x^\star ) - H_\star^{g }(x_k - x^\star )}_{x^\star }^{\diamond} \le R_\nu(d_\nu(x^\star ,x_k))d_\nu(x^\star ,x_k)\norm{x_k-x^\star }_{x^\star }.
	\end{align*}
	Following \cite[p. 195]{sun2019generalized}, we can derive the following inequality in a neighbourhood of the sublevel set of $\calL_s$ in \eqref{eq:partialsmooth-prob} using \lemref{thm:g-bound0} and the convexity of $g_s$:
	\begin{align}
		\|\gradn g_s(x_k)\|_{x_k}^{\diamond} \ge \omega_\nu(-d_\nu(x^\star ,x_k))\|x_k-x^\star \|_{x_k}.\label{eq:hg-positive}
	\end{align}
	In this regard, \eqref{eq:steplength} gives
	\begin{align}
		1-\bar{\alpha}_k \le \frac{\lambda_k - \alpha_k}{\lambda_k}.
	\end{align}
	Next, by \ref{ass:p2}, we deduce
	\begin{align*}
		\norm{\gradn g_s(x_k) - \gradn g_s(x^\star )} \le L_2\norm{x_k-x^\star },
	\end{align*}
	and
	\begin{align*}
		\norm{\gradn f(x_k) - \gradn f(x^\star )} \le L_1\norm{x_k-x^\star }.
	\end{align*}
	Then, using \eqref{eq:hg-bound}, we get
	\begin{align*}
		\norm{\gradn g_s(x_k) - \gradn g_s(x^\star )}_{x^\star }^{\diamond} &= \norm{H_k^{g^{\star -\frac{1}{2}}}\left(\gradn g_s(x_k) - \gradn g_s(x^\star )\right)}\\
		&\le \frac{L_2}{\sqrt{\rho_2}}\norm{x_k-x^\star }.
	\end{align*}
	Similarly,
	\begin{align*}
		\norm{\gradn f(x_k) - \gradn f(x^\star )}_{x^\star }^{\diamond} \le \frac{L_1}{\sqrt{\rho_2}}\norm{x_k-x^\star }.
	\end{align*}
	Finally, putting the above estimates into \eqref{eq:xk-bound}, we obtain \eqref{eq:conv-rate-xk-alg1}.
\end{proof}
To prove the local convergence of Algorithm \ref{alg:GGNSCOREProx}, we need an additional assumption about the behaviour of the Jacobian matrix $J_k$ near $x^\star $. As before, $J_k$ denotes the Jacobian matrix evaluated at $x_k$; likewise, $V_k$ and $e_k$. At $x^\star $, we respectively write $J^\star $, $V^\star $ and $u^\star $. We assume the following:
\begin{enumerate}[label=\enumlabel{G}, ref=\enumref{G}]
	\item $\|J_kv\|\ge \beta_1\|v\|$, $\beta_1 \in \rp$, for all $x_k$ near $x^\star $, and for any $v\in \rr^n$.\label{ass:g1}
\end{enumerate}
For $f$ defined by \eqref{eq:f-ggn}, condition \ref{ass:g1} implies that the singular values of $J_k$ are uniformly bounded away from zero, at least locally. Let the unaugmented version of the residual vector $e_k$ be denoted by $\tilde{e}_k$, that is,
\begin{align*} \tilde{e}_k\coloneqq[l'_{\hat{y}^{(1)}}(y^{(1)},\hat{y}^{(1)}),\ldots,l'_{\hat{y}^{(m)}}(y^{(m)},\hat{y}^{(m)})]^\top\in\rr^m.
\end{align*}
Define the following matrix:
\begin{align}
	W_k^\top  &\coloneqq \begin{bmatrix}
		\hess{\hat{y}{^{(1)}}}(x^{(1)}) & \hess{\hat{y}{^{(2)}}}(x^{(1)}) & \cdots & \hess{\hat{y}{^{(m)}}}(x^{(1)})\\
		\hess{\hat{y}{^{(1)}}}(x^{(2)}) & \hess{\hat{y}{^{(2)}}}(x^{(2)}) & \cdots & \hess{\hat{y}{^{(m)}}}(x^{(2)})\\
		\vdots&\vdots&&\vdots\\
		\hess{\hat{y}{^{(1)}}}(x^{(n)}) & \hess{\hat{y}{^{(2)}}}(x^{(n)}) & \cdots & \hess{\hat{y}{^{(m)}}}(x^{(n)})
	\end{bmatrix} \in \rr^{n\times m}. \label{eq:Junaug}
\end{align}
We note that the ``full'' Hessian matrix $H_k$ can be expressed as
\begin{align}
	H_k \equiv J_k^\top V_k J_k + (\mathbf{1}\otimes(W_k^\top\tilde{e}_k))^\top + H_k^g,\label{eq:hk-hat}
\end{align}
where $\mathbf{1}\in\rr^{n\times 1}$ is the $n \times 1$ matrix of ones and $\otimes$ denotes the outer product. By \ref{ass:p1}, \ref{ass:p2} and the Lipschitz continuity of $g_s$ around $x^\star $ in \propref{thm:g-lip}, we have: for $r$ small enough, there exists a constant $\beta_2\in \rp$ such that $\norm{\tilde{e}_k}\le \beta_2$ near $x^\star $. Furthermore by our assumptions (see, \eg, \cite[Theorem 10.1]{nocedal1999numerical}), we deduce that there exists $\beta_3\in \rp$ such that $\|W_k\|\le \beta_3$ near $x^\star $.

The next result follows. Note that for Algorithm \ref{alg:GGNSCOREProx}, we consider the case where $f$ in problem \eqref{eq:partialsmooth-prob} may, in general, be expressed in the form \eqref{eq:f-ggn}.
\begin{theorem}\label{thm:alg2}
	Suppose that \ref{ass:p1}--\ref{ass:p4} hold, and let $x^\star$ be an optimal solution of \eqref{eq:partialsmooth-prob} where $f$ is defined by \eqref{eq:f-ggn}. Additionally, let \ref{ass:g1} hold for the Jacobian matrix $J_k$ defined by \eqref{eq:Jaug}. Let $\{x_k\}$ be the sequence of iterates generated by Algorithm \ref{alg:GGNSCOREProx}, and define $\lambda_k\coloneqq 1+M_g\omega_\nu(-d_\nu(x^\star,x_k))\norm{x_k-x^\star}_{x_k}$, where $\omega_\nu$ is defined by \eqref{eq:omega-nu}. Then starting from a point $x_0 \in \calE_r(x^\star)$, if $d_\nu(x^\star,x_k)<1$ with $d_\nu$ defined by \eqref{eq:d-metric}, the sequence $\{x_k\}$ satisfies
	\begin{align}\label{eq:conv-rate-xk-alg2}
		\norm{x_{k+1}-x^\star}_{x^\star} \le \vartheta_k\norm{x_k-x^\star} + R_k\norm{x_k-x^\star}_{x^\star} + \frac{L_f}{2\sqrt{\rho_2}}\norm{x_k-x^\star}^2,
	\end{align}
	where $R_k$ is as defined in \thmref{thm:alg1}, $\vartheta_k \coloneqq (\lambda_k(L_1+L_2)(\lambda_k-\alpha_k)+\tilde{\beta})/\sqrt{\rho_2}$, $\alpha_k\in(0,1]$, and $\tilde{\beta}\coloneqq\beta_2\beta_3\in \rp$.
\end{theorem}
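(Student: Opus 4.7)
The plan is to adapt the proof of Theorem \ref{thm:alg1} to the GGN setting by accounting for the discrepancy between the full Hessian $\hessn q(x_k)$ and the GGN surrogate $H_k^{\mathrm{ggn}} \coloneqq J_k^\top V_k J_k + H_g$ that is actually used in Algorithm \ref{alg:GGNSCOREProx}. By \eqref{eq:hk-hat}, this discrepancy is exactly the matrix $(\mathbf{1}\otimes(W_k^\top\tilde{u}_k))^\top$, which is dropped by the GGN approximation. Once this perturbation is isolated, the remaining skeleton of the argument parallels Theorem \ref{thm:alg1} verbatim.

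First I would express $x_{k+1}$ and $x^*$ as images of the scaled proximal operator $\prox_{\alpha_k g}^{H_g^*}$ exactly as in the opening line of the proof of Theorem \ref{thm:alg1}, and apply the nonexpansiveness property \eqref{eq:nonexpansive}. This reduces the analysis to a bound of the schematic form
\begin{align*}
\norm{x_{k+1}-x^*}_{x^*} \le \norm{\gradn q(x_k)-\gradn q(x^*)-H^*(x_k-x^*)}_{x^*}^* + (1-\bar{\alpha}_k)\norm{\gradn q(x_k)-\gradn q(x^*)}_{x^*}^* + \norm{e_k}_{x^*}^*,
\end{align*}
in which $e_k$ collects the extra contribution arising from using $H_k^{\mathrm{ggn}}$ in place of $\hessn q(x_k)$; modulo the spectrum of $H_g^*$, this term is controlled by $(\mathbf{1}\otimes(W_k^\top\tilde{u}_k))^\top(x_k-x^*)$.

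Next I would split the $\gradn q$ contributions into $\gradn f$ and $\gradn g_s$ parts and bound them exactly as in Theorem \ref{thm:alg1}: Lemma \ref{thm:gradf-bound} together with the identity $\norm{v}_{x^*}^*=\norm{H_g^{*-\frac{1}{2}}v}$ and \eqref{eq:hg-bound} yields the $L_f\norm{x_k-x^*}^2/(2\sqrt{\rho})$ term; Lemma \ref{thm:gradg-bound} yields the $R_k\norm{x_k-x^*}_{x^*}$ term; and assumption \ref{ass:p2} combined with \eqref{eq:hg-positive} and the definition of $\bar{\alpha}_k$ in \eqref{eq:steplength} produces the $(L+L_0)(\lambda_k-\alpha_k)/\sqrt{\rho}$ contribution, picking up the extra $\lambda_k$ factor that appears because the implicit inversion of $H_k^{\mathrm{ggn}}$ no longer cancels cleanly against $\gradn q$ as it does for the true Hessian in Theorem \ref{thm:alg1}. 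Finally, the perturbation $e_k$ is absorbed using the local stability bounds $\norm{\tilde{u}_k}\le\beta_2$ and $\norm{W_k}\le\beta_3$ established just before the theorem, which together with \eqref{eq:hg-bound} give $\norm{e_k}_{x^*}^*\le \tilde{\beta}\norm{x_k-x^*}/\sqrt{\rho}$ with $\tilde{\beta}=\beta_2\beta_3$.

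The main obstacle I anticipate is the first reduction step, since Proposition \ref{thm:optimality} and the fixed-point characterization in \eqref{eq:error-term1}--\eqref{eq:ca-optimality-expanded} are stated in terms of the full Hessian $\hessn q$. Extending them to the GGN surrogate requires carrying the perturbation $(\mathbf{1}\otimes(W_k^\top\tilde{u}_k))^\top$ through the scaled-proximal equation and isolating it as the additive term $e_k$. Assumption \ref{ass:g1} is what makes this clean: it guarantees that $H_k^{\mathrm{ggn}}=J_k^\top V_k J_k + H_g$ remains uniformly positive definite in a neighbourhood of $x^*$, so that the inverse appearing in \eqref{eq:ggn-step} is well-defined with controlled spectrum and the nonexpansiveness argument remains valid with metric $H_g^*$. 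Once this step is completed, collecting the four contributions reproduces \eqref{eq:conv-rate-xk-alg2} with the stated $\vartheta_k$ and $R_k$.
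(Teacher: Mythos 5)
Your proposal follows essentially the same route as the paper's proof: apply nonexpansiveness of the scaled proximal operator as in Theorem \ref{thm:alg1}, use the decomposition \eqref{eq:hk-hat} to isolate the GGN-versus-full-Hessian discrepancy as the rank-structured perturbation $(\mathbf{1}\otimes(W_k^\top\tilde{u}_k))^\top(x_k-x^*)$, bound it by $\tilde{\beta}\norm{x_k-x^*}/\sqrt{\rho}$ via \eqref{eq:hg-bound} and the local bounds $\norm{\tilde{u}_k}\le\beta_2$, $\norm{W_k}\le\beta_3$, and reuse the Theorem \ref{thm:alg1} estimates for the remaining terms. The decomposition and constants match the paper's argument, so the proposal is correct and not materially different.
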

\begin{proof}
	Let $\hat{H}_k \coloneqq J_k^\top V_k J_k + H_k^g$, and consider the iterative process of Algorithm \ref{alg:GGNSCOREProx} given by
	\begin{align*}
		x_{k+1} = \prox_{\alpha_k g}^{H_k^g} (x_k - \bar{\alpha}_k \hat{H}_k^{-1}J_k^\top e_k).
	\end{align*}
	We first note that $J_k^\top e_k$ is a compact way of writing $\gradn f(x_k) + \gradn g_s(x_k) \eqqcolon \gradn q(x_k)$, where $f$ is given by \eqref{eq:f-ggn}. Following the proof of \thmref{thm:alg1}, we have
	\begin{align}
		&\norm{x_{k+1}-x^\star }_{x^\star } = \norm{\prox_{\alpha_k g}^{H_\star^{g }}(E_{x^\star }(x_k) + \xi_{x^\star }(Q_k,x_{k+1})) - \prox_{\alpha_k g}^{H_\star^{g }}(E_{x^\star }(x^\star ))}_{x^\star }\nonumber\\
		&\le  \norm{\gradn q(x_k) - \gradn q(x^\star ) - \hat{H}_k^\star (x_k - x^\star )}_{x^\star }^{\diamond} + (1-\bar{\alpha}_k)\norm{\gradn q(x_k) - \gradn q(x^\star )}_{x^\star }^{\diamond}.\label{eq:xk-bound-2}
	\end{align}
	Let $W^\star $ and $\tilde{u}^\star $ respectively denote expressions for $W_k$ and $\tilde{u}$ evaluated at $x^\star $. Substituting \eqref{eq:hk-hat} into \eqref{eq:xk-bound-2} and using \eqref{eq:hg-bound} in the estimate
	\begin{align*}
		\norm{(\mathbf{1}\otimes(W^{\star ^\top}\tilde{e}_k))^\top(x_k-x^\star )}_{x^\star }^{\diamond}\le \norm{H_k^{g^{\star -\frac{1}{2}}}(\mathbf{1}\otimes(W^{\star ^\top}\tilde{u}^\star ))^\top}\norm{x_k-x^\star },
	\end{align*}
	where $W_k$ is defined by \eqref{eq:Junaug}, we get
	\begin{align}
		&\norm{x_{k+1}-x^\star }_{x^\star } \le  \norm{\gradn q(x_k) - \gradn q(x^\star ) - H_\star (x_k - x^\star )}_{x^\star }^{\diamond} + \norm{(\mathbf{1}\otimes(W^{\star ^\top}\tilde{u}^\star ))^\top(x_k-x^\star )}_{x^\star }^{\diamond}\nonumber\\
		&\quad + (1-\bar{\alpha}_k)\norm{\gradn q(x_k) - \gradn q(x^\star )}_{x^\star }^{\diamond}\nonumber\\
		&\le  \norm{\gradn f(x_k) - \gradn f(x^\star ) - H_\star^{f }(x_k - x^\star )}_{x^\star }^{\diamond} + \norm{\gradn g_s(x_k) - \gradn g_s(x^\star ) - H_\star^{g }(x_k - x^\star )}_{x^\star }^{\diamond} \nonumber\\&\quad + (1-\bar{\alpha}_k)\left(\norm{\gradn f(x_k) - \gradn f(x^\star )}_{x^\star }^{\diamond} + \norm{\gradn g_s(x_k) - \gradn g_s(x^\star )}_{x^\star }^{\diamond}\right) + \frac{\tilde{\beta}\norm{x_k - x^\star }}{\sqrt{\rho_2}}, \label{eq:xk-bound-3}
	\end{align}
	where $\tilde{\beta}=\beta_2\beta_3$. Now, using the estimates derived in the proof of \thmref{thm:alg1} in \eqref{eq:xk-bound-3} above, we obtain \eqref{eq:conv-rate-xk-alg2}.
\end{proof}

\section{Numerical experiments}\label{sec:experiments}
In this section, we validate the efficiency of the technique introduced in this paper in numerical examples using both synthetic and real datasets from the LIBSVM repository \cite{chang2011libsvm}. The approach and algorithms proposed in this paper are implemented in the Julia programming language and are available online as an open-source package\footnote{\url{https://github.com/adeyemiadeoye/SelfConcordantSmoothOptimization.jl}. Code to reproduce most of the experiments in this paper can be found in the \textbf{v0.1.0} release.}. We test the performance of Algorithms \ref{alg:NewtonSCOREProx} and \ref{alg:GGNSCOREProx} for various fixed values of $\alpha_k \equiv \alpha \in (0,1]$ (see \figurename~\ref{fig:alpha-plots}). In the remaining parts, we fix $\alpha_k = 1$ and compare our approach with \texttt{PANOC} \cite{stella2017simple}, \texttt{ZeroFPR} \cite{themelis2018forward}, \texttt{OWL-QN} \cite{andrew2007scalable}, proximal gradient \cite{lions1979splitting}, and fast proximal gradient \cite{beck2009fast} algorithms\footnote{We use the open-source package \texttt{ProximalAlgorithms.jl} for the \texttt{PANOC}, \texttt{ZeroFPR}, and fast proximal gradient algorithms, while we use our own implementation of the \texttt{OWL-QN} (modification of \url{https://gist.github.com/yegortk/ce18975200e7dffd1759125972cd54f4}) and proximal gradient methods.}. In the sparse-group lasso experiments, we also compare with the block coordinate descent (\texttt{BCD})\footnote{We use the \texttt{BCD} method of \cite{ndiaye2017gap} which is efficiently implemented with a gap safe screening rule. The open-source implementation can be found in \url{https://github.com/EugeneNdiaye/Gap_Safe_Rules}.} algorithm, and the semismooth Newton augmented Lagrangian (\texttt{SSNAL}) method \cite{li2018highly} which was extended\footnote{We use the freely available implementation provided by the authors in \url{https://github.com/YangjingZhang/SparseGroupLasso}.} in \cite{zhang2020efficient} to solve sparse-group lasso problems. \texttt{BCD} is known to be an efficient algorithm for general regularized problems \cite{friedman2010regularization}, and is used as a standard approach for the sparse-group lasso problem \cite{ida2019fast,friedman2010note,simon2013sparse}. Since the problems considered in our experiments use the $\ell_1$ and $\ell_2$ regularizers, we use $\phi(t) = \frac{1}{p}\sqrt{1+p^2\abs{t}^2}-1$ from \exref{ex:infconv1}, with $p=1$ and derive $g_s$ in problem \eqref{eq:partialsmooth-prob} accordingly.

For a diagonal matrix $H_k^g\in\rr^{n\times n}$, the scaled proximal operator for the $1$- and $2$-norms are obtained using the proximal calculus derived in \cite{becker2019quasi}. Let\ $\hat{d}_k\in\rr^n$ be the vector containing the diagonal entries of $H_k^g$, and let $\beta \in \rp$; the components of $\prox_{\beta \|\cdot\|_1}^{H_k^g}$ and $\prox_{\beta \|\cdot\|}^{H_k^g}$ at iteration $k$ are given, respectively, by:
\begin{enumerate}
	\item $\left(\prox_{\beta \|\cdot\|_1}^{H_k^g}(p_k)\right)^{(i)} = \sign(p_k^{(i)})\max\{|p_k^{(i)}| - \beta \hat{d}_k^{(i)},0\}$, and \quad
	\item $\left(\prox_{\beta \|\cdot\|}^{H_k^g}(p_k)\right)^{(i)} = p_k^{(i)}\max\{1 - \beta\hat{d}_k^{(i)}/\|p_k\|,0\}$.
\end{enumerate}
We terminate each of the tested algorithms either with the default stopping criterion or when $\frac{\|x_k  - x_{k-1}\|}{\max\{\|x_{k-1}\|,1\}} < \eps_{tol}$ with $\eps_{tol} \in \{10^{-6}, 10^{-10}\}$.

All experiments are performed on a laptop with dual (2.30GHz + 2.30GHz) Intel Core i7-11800 H CPU
and 32GB RAM.
\begin{table}[t!]
	\centering
	\small
	\caption{Summary of the real datasets used for sparse logistic regression.}
	\begingroup
	\setlength{\tabcolsep}{10pt}
	\renewcommand{\arraystretch}{1.2}
	\begin{tabular}{cccc}
		\toprule
		Data & $m$ & $n$ & Density \\
		\midrule
		\texttt{mushrooms} & $8124$ & $112$ & $0.19$ \\
		\texttt{phishing} & $11055$ & $68$ & $0.44$ \\
		\texttt{w1a} & $2477$ & $300$ & $0.04$ \\
		\texttt{w2a} & $3470$ & $300$ & $0.04$ \\
		\texttt{w3a} & $4912$ & $300$ & $0.04$ \\
		\texttt{w4a} & $7366$ & $300$ & $0.04$ \\
		\texttt{w5a} & $9888$ & $300$ & $0.04$ \\
		\texttt{w8a} & $49749$ & $300$ & $0.04$ \\
		\texttt{a1a} & $1605$ & $123$ & $0.11$ \\
		\texttt{a2a} & $2265$ & $123$ & $0.11$ \\
		\texttt{a3a} & $3185$ & $123$ & $0.11$ \\
		\texttt{a4a} & $4781$ & $123$ & $0.11$ \\
		\texttt{a5a} & $6414$ & $123$ & $0.11$ \\
		\bottomrule
	\end{tabular}
	\label{tab:data-summary}
	\endgroup
\end{table}
\begin{figure}[t!]
	\centering
	\subfloat{%
		\resizebox*{7.0cm}{!}{\includegraphics{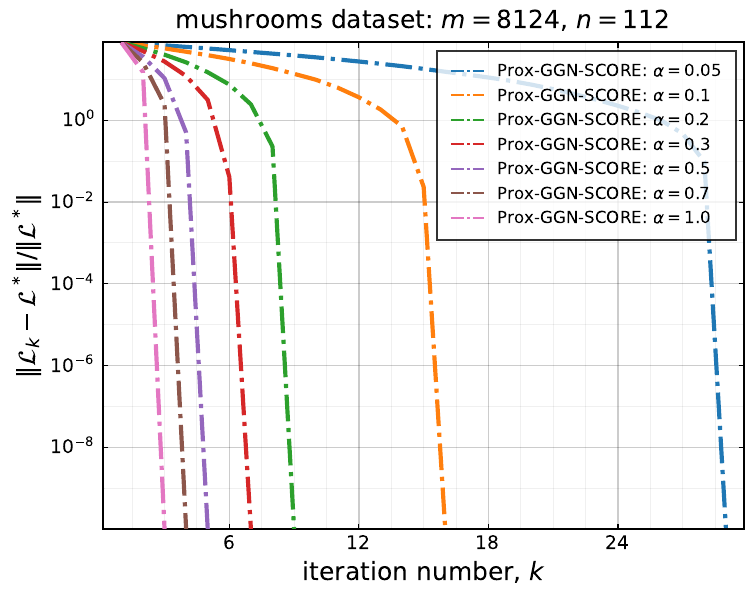}}}\hspace{5pt}
	\subfloat{%
		\resizebox*{7.0cm}{!}{\includegraphics{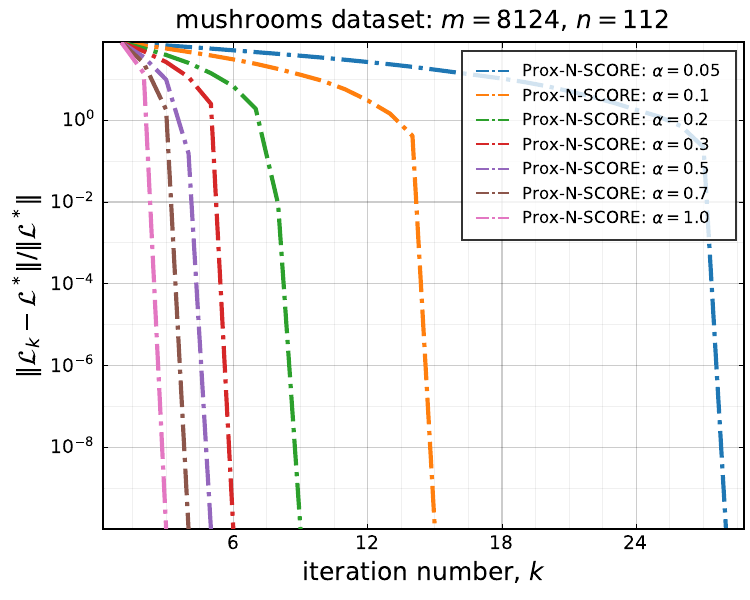}}}
	\vfil
	\subfloat{%
		\resizebox*{7.0cm}{!}{\includegraphics{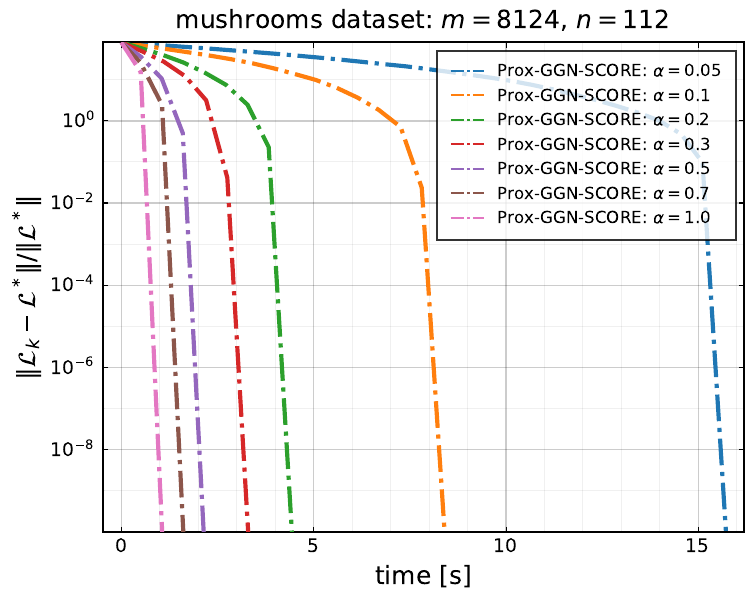}}}\hspace{5pt}
	\subfloat{%
		\resizebox*{7.0cm}{!}{\includegraphics{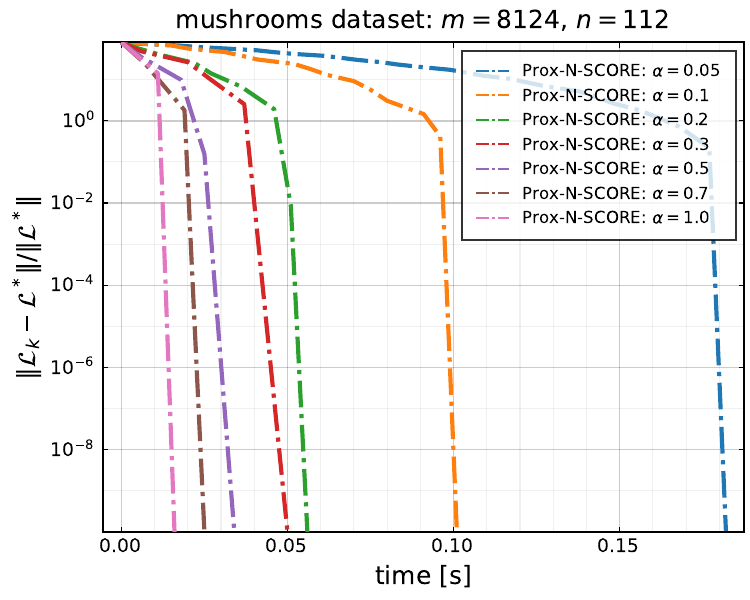}}}
	\caption{Behaviour of \texttt{Prox-N-SCORE} and \texttt{Prox-GGN-SCORE} for different fixed values of $\alpha_k$ in problem \eqref{eq:logexample}.} \label{fig:alpha-plots}
\end{figure}
\begin{figure}[t!]
	\centering
	\subfloat{%
		\resizebox*{7.0cm}{!}{\includegraphics{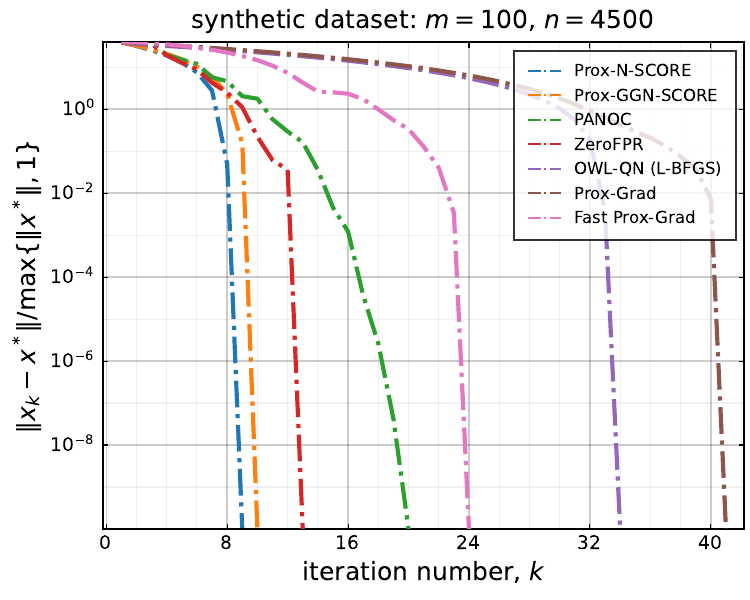}}}\hspace{5pt}
	\subfloat{%
		\resizebox*{7.0cm}{!}{\includegraphics{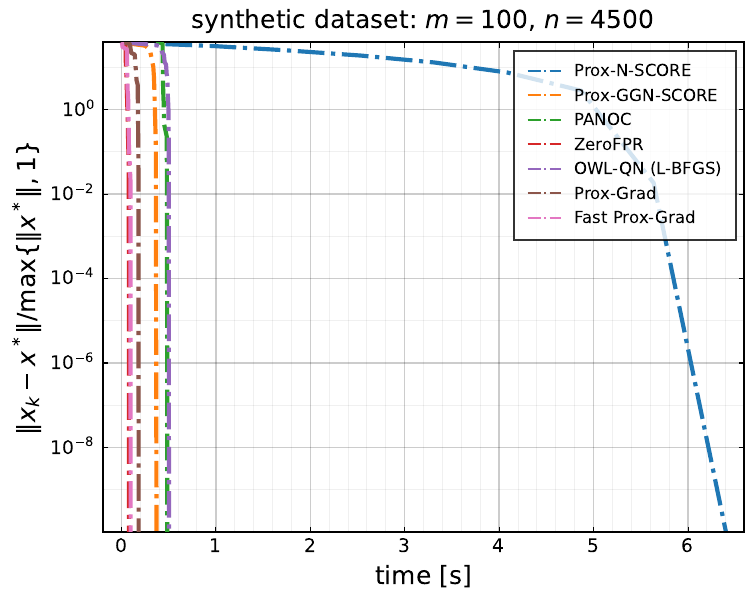}}}
	\vfil
	\subfloat{%
		\resizebox*{7.0cm}{!}{\includegraphics{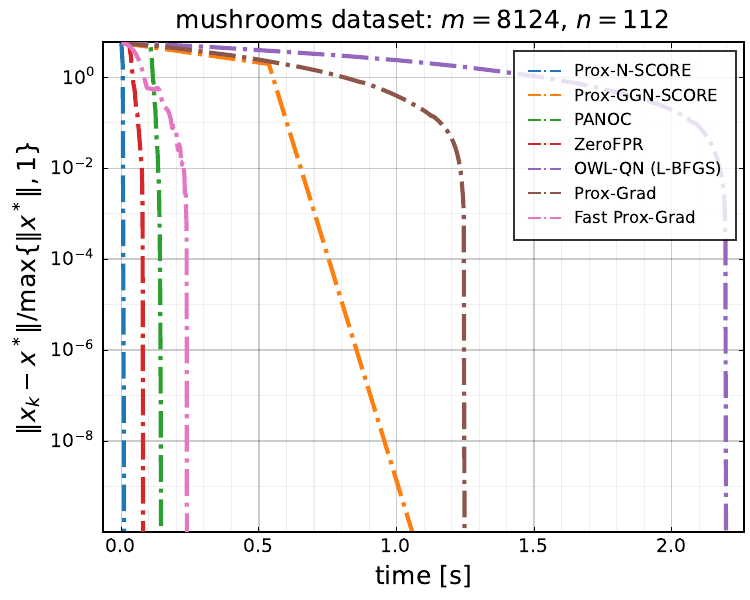}}}\hspace{5pt}
	\subfloat{%
		\resizebox*{7.0cm}{!}{\includegraphics{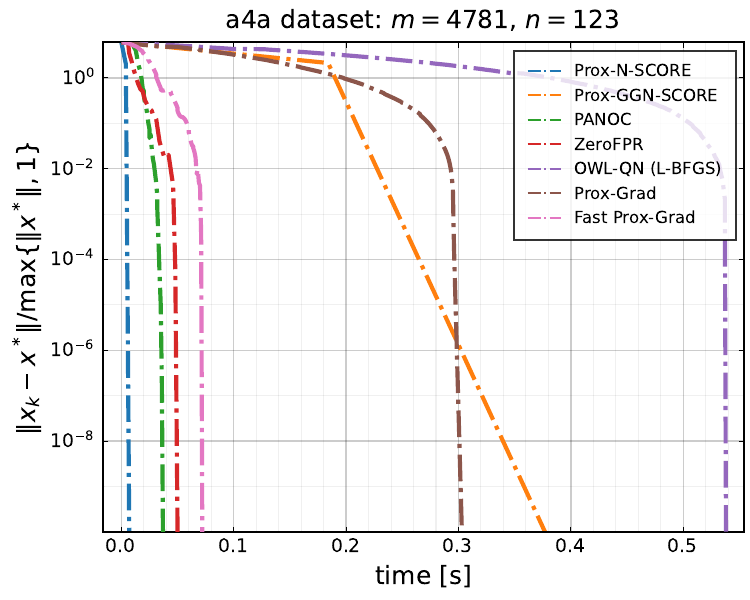}}}
	\caption{Overparameterized problem (first row) and non-overparameterized problems (second row) in \eqref{eq:logexample}. \texttt{Prox-GGN-SCORE} reduces most of the computational burden of \texttt{Prox-N-SCORE} if $m+n_y < n$ (or $m\ll n$). However, \texttt{Prox-N-SCORE} solves the problem faster, and is more stable, if $n < m+n_y$ (or $n\ll m$).} \label{fig:splogl1-loss}
\end{figure}
\begin{figure}[t!]
	\centering
	\includegraphics[scale=0.75]{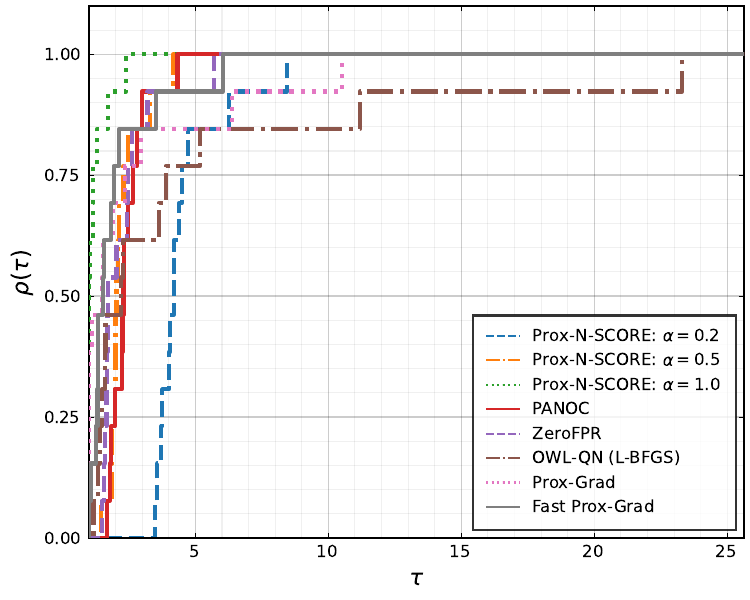}
	\caption{Performance profile (CPU time) for the sparse logistic regression problem \eqref{eq:logexample} using the LIBSVM datasets summarized in \tablename~\ref{tab:data-summary}. Here, $\tau$ denotes the performance ratio (CPU times in seconds) averaged over 20 independent runs with different random initializations, and $\rho(\tau)$ is the corresponding frequency.}
	\label{fig:perf-prof}
\end{figure}
\subsection{Sparse logistic regression}\label{ss:logexample}
We consider the problem of finding a sparse solution $x$ to the following logistic regression problem
\begin{align}\label{eq:logexample}
	\min\limits_{x\in\rr^n} \calL(x) \coloneqq \underbrace{\sum_{i=1}^{m} \log\left(1 + \exp(-y^{(i)}\langle a^{(i)}, x\rangle)\right)}_{\eqqcolon f(x)} + \beta \|x\|_1,
\end{align}
where, in view of \eqref{eq:prob}, $g(x) \coloneqq \beta \|x\|_1$, $\beta\in\rp$, and $a^{(i)} \in \rr^n, y^{(i)} \in \{-1,1\}$ form the data. We perform experiments on both randomly generated data and real datasets summarized in \tablename~\ref{tab:data-summary}. For the synthetic data, we set $\beta = 0.2$, while for the real datasets, we set $\beta = 1$. We fix $\mu =1$ in both Algorithms \ref{alg:NewtonSCOREProx} and \ref{alg:GGNSCOREProx}, and set $\alpha_k = 1/L$ for the proximal gradient algorithm, where $L$ is estimated as $L = \lambda_{max} (A^\top A)$, the columns of $A\in \rr^{n\times m}$ are the vectors $a^{(i)}$ and $\lambda_{max}$ denotes the largest eigenvalue. For the sake of fairness, we provide this value of $L$ to each of \texttt{PANOC}, \texttt{ZeroFPR}, and fast proximal gradient algorithms for computing their step lengths in our comparison.

The results are shown in \figurename~\ref{fig:alpha-plots}, \figurename~\ref{fig:splogl1-loss} and \figurename~\ref{fig:perf-prof}. In \figurename~\ref{fig:splogl1-loss}, we observe that \texttt{Prox-GGN-SCORE} reduces most of computational burden of the quasi-Newton method when $m+n_y<n$ and makes the method competitive with the first-order methods considered. However, as shown in both \figurename~\ref{fig:alpha-plots} and \figurename~\ref{fig:splogl1-loss}, \texttt{Prox-GGN-SCORE} is no longer preferred when $n<m+n_y$ and, by our experiments, the algorithm can run into computational issues when $n \ll m$. In this case (particularly for all of the real datasets that we use in this example), \texttt{Prox-N-SCORE} would be preferred and, as shown in the performance profile of \figurename~\ref{fig:perf-prof}, outperforms other tested algorithms in most cases, especially with $\alpha=1$.
\subsection{Sparse-group lasso}\label{ex:glasso}
\begin{sidewaystable}
	\centering
	%		\small
	\caption{Performance of \texttt{Prox-GGN-SCORE} (\texttt{alg.A}), \texttt{SSNAL} (\texttt{alg.B}), \texttt{Prox-Grad} (\texttt{alg.C}) and \texttt{BCD} (\texttt{alg.D}) on the sparse-group lasso problem \eqref{eq:sgl-example} for different values of $m$ and $n$. nnz stands for the number of nonzero entries of $x^\star $ and of the solutions found by the algorithms. MSE stands for the mean squared error between the true solution $x^\star $ and the estimated solutions.}
	\begin{adjustbox}{width=\textwidth}
		\setlength{\tabcolsep}{2.5pt} % Default value: 6pt
		\renewcommand{\arraystretch}{4} % Default value: 1
		\begin{tabular}{@{}|c|c|c|c|c|c|c|c|c|c|c|c|c|c|c|c|c|}
			\hline
			\multirow{2}{*}{$(m,n; nnz)$} &
			\multicolumn{4}{c|}{nnz} &
			\multicolumn{4}{c|}{Iteration\footnote{Number of ``outer'' iterations is displayed for \texttt{SSNAL} (\texttt{alg.B}). In an augmented Lagrangian method, most of the computational time is likely spent on the inner iterations.}} &
			\multicolumn{4}{c|}{Time [s]} &
			\multicolumn{4}{c|}{MSE} \\
			\cline{2-17}
			& \texttt{alg.A} & \texttt{alg.B} & \texttt{alg.C} & \texttt{alg.D} & \texttt{alg.A} & \texttt{alg.B} & \texttt{alg.C} & \texttt{alg.D} & \texttt{alg.A} & \texttt{alg.B} & \texttt{alg.C} & \texttt{alg.D} & \texttt{alg.A} & \texttt{alg.B} & \texttt{alg.C} & \texttt{alg.D} \\
			\hline
			$(500,2000; 19)$ & 19 & 198 & 19 & 19 & 161 & 62 & 5904 & 9690 & \textbf{2.81} & 4.65 & 13.39 & 3.55 & \text{2.9305E-09} & \text{5.3188E-08} & \text{2.5189E-07} & \text{8.0350E-06} \\
			$(500,4000; 36)$ & 36 & 39 & 36 & 36 & 253 & 140 & 10991 & 16790 & \textbf{8.44} & 39.51 & 51.91 & 11.60 & \text{1.4291E-08} & \text{4.3952E-08} & \text{1.1653E-06} & \text{1.7127E-05} \\
			$(500,5000; 45)$ & 45 & 45 & 45 & 45 & 530 & 111 & 13919 & 20830 & \textbf{16.60} & 35.57 & 90.05 & 18.53 & \text{2.6339E-07} & \text{6.0898E-08} & \text{2.0121E-06} & \text{2.1641E-05} \\
			$(1000,5000; 45)$ & 45 & 82 & 45 & 45 & 112 & 35 & 3051 & 9100 & \textbf{8.71} & 15.73 & 11.57 & 22.14 & \text{2.4667E-07} & \text{1.9757E-06} & \text{2.1747E-06} & \text{5.0779E-06} \\
			$(1000,7000; 65)$ & 65 & 65 & 65 & 65 & 185 & 82 & 7012 & 20870 & \textbf{30.26} & 148.07 & 42.45 & 70.08 & \text{4.5689E-07} & \text{2.2847E-08} & \text{4.0172E-06} & \text{1.8038E-05} \\
			$(1000,10000; 94)$ & 93 & 94 & 94 & 94 & 497 & 102 & 9879 & 29330 & \textbf{53.26} & 252.05 & 90.25 & 126.17 & \text{3.8421E-06} & \text{2.8441E-08} & \text{3.6320E-06} & \text{3.5855E-05} \\
			$(1000,12000; 112)$ & 112 & 113 & 113 & 164 & 663 & 68 & 21178 & 59360 & \textbf{166.15} & 194.40 & 221.26 & 373.50 & \text{1.5750E-05} & \text{4.6965E-08} & \text{7.3285E-06} & \text{5.9521E-05} \\
			\hline
	\end{tabular}\end{adjustbox}
	\label{tab:gl-results}
\end{sidewaystable}
\begin{figure}[t!]
	\centering
	\subfloat{%
		\resizebox*{7cm}{!}{\includegraphics{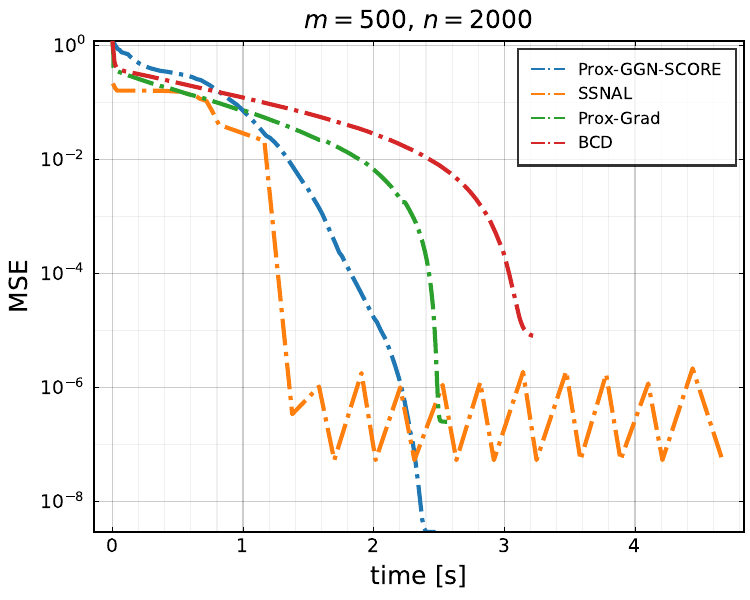}}}\hspace{5pt}
	\subfloat{%
		\resizebox*{7cm}{!}{\includegraphics{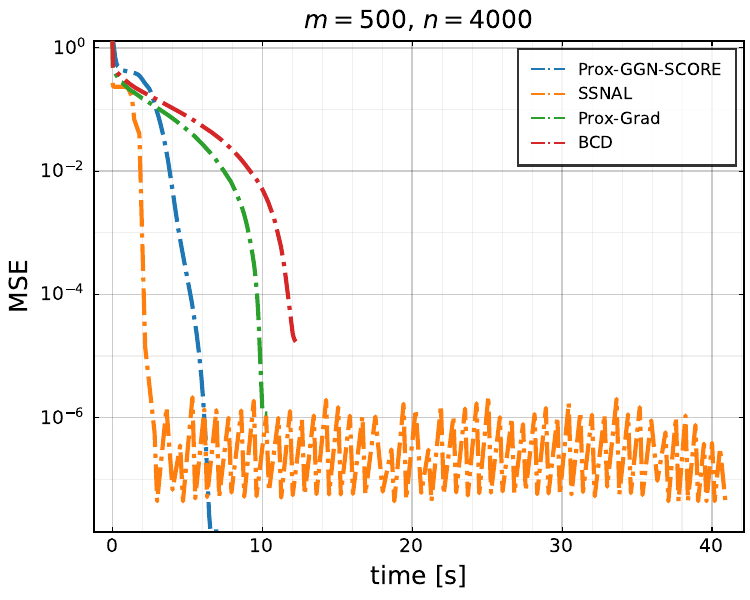}}}
	\vfil
	\subfloat{%
		\resizebox*{7cm}{!}{\includegraphics{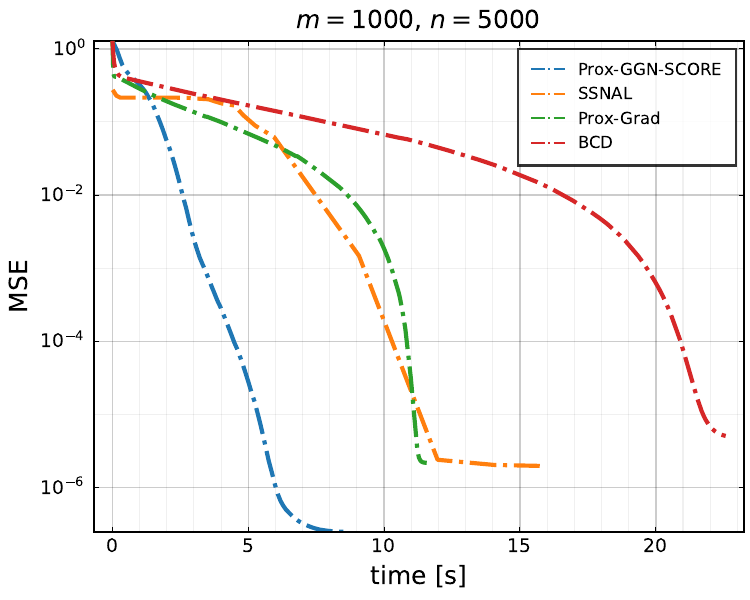}}}\hspace{5pt}
	\subfloat{%
		\resizebox*{7cm}{!}{\includegraphics{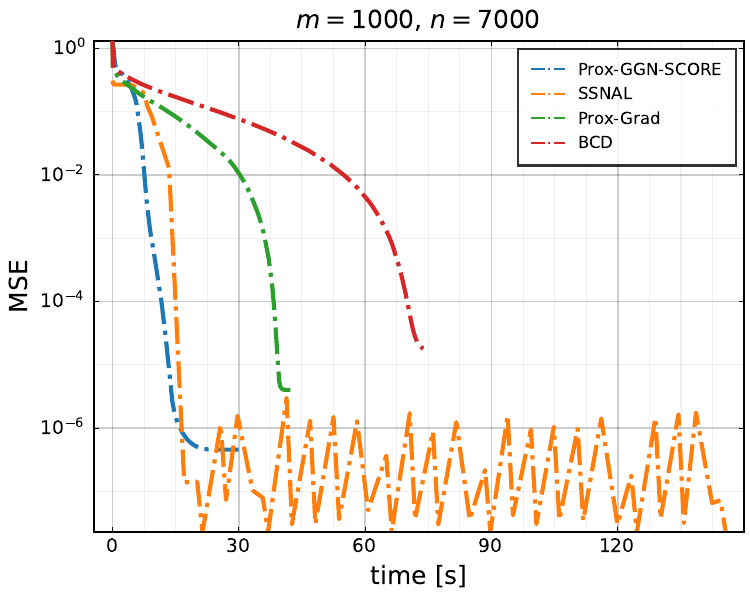}}}
	\caption{Mean squared error (MSE) between the estimates $x_k$ and the true coefficient $x^\star $ for \texttt{Prox-GGN-SCORE}, \texttt{SSNAL}, \texttt{Prox-Grad} and \texttt{BCD} on the sparse-group lasso problem \eqref{eq:sgl-example}.} \label{fig:sgl-plots}
\end{figure}
In this example, we consider the sparse-group lasso problem \eqref{eq:glasso}:
\begin{align}\label{eq:sgl-example}
	\min\limits_{x\in\rr^n} \calL(x) \coloneqq \underbrace{\frac{1}{2}\norm{Ax - y}^2}_{\eqqcolon f(x)} + \underbrace{\beta\norm{x}_1 + \beta_\calG \sum_{j\in \calG}\omega_j\|x^{(j)}\|}_{\eqqcolon g(x)}.
\end{align}
We use the common example used in the literature \cite{wang2014two,tibshirani2012strong}, which is based on the model $y = Ax^\star  + 0.01\epsilon\in \rr^{m\times 1}$, $\epsilon \sim \calN(0,1)$. The entries of the data matrix $A\in \rr^{m\times n}$ are drawn from the normal distribution with pairwise correlation $\mathrm{corr}(A^{(i)},A^{(j)}) = 0.5^{|i-j|}$, $\forall (i,j) \in \{1,\ldots,n\}^2$. We generate datasets for different values of $m$ and $n$ with $n$ satisfying $(n \mod n_g)=0$. In this problem, we want to further highlight the faster computational time achieved by the approximation in \texttt{Prox-GGN-SCORE}, so we consider only overparameterized models (\ie, with $m+n_y\le n$).

In this problem, the matrix $C$ in the reformulation \eqref{eq:structured-prob} is a diagonal matrix with row indices given by all pairs $(i,j) \in \{(i,j)|i\in j, i \in \{1,\ldots,n_g\}, j\in\calG\}$, and column indices given by $k\in\{1,\ldots,n_g\}$. That is,
\begin{align*}
	C^{((i,j),k)} = \begin{cases}
		\beta_\calG\omega_j &\quad\text{if }i=k,\\
		0 &\quad\text{otherwise}.
	\end{cases}
\end{align*}
We construct $x^\star $ in a similar way as \cite{ndiaye2016gap}: We fix $n_g=100$ and break $n$ randomly into groups of equal sizes with $0.1$ percent of the groups selected to be \emph{active}. The entries of the subvectors in the \emph{nonactive} groups are set to zero, while for the active groups, $\lceil\frac{n}{n_g}\rceil\times 0.1$ of the subvector entries are drawn randomly and set to $\sign(\xi) \times U$ where $\xi$ and $U$ are uniformly distributed in $[0.5,10]$ and $[-1,1]$, respectively; the remaining entries are set to zero. For the sake of fair comparison, each data and the associated initial vector $x_0$ are generated in Julia, and exported for the \texttt{BCD} implementation in Python and also for \texttt{SSNAL} in MATLAB.

For \texttt{Prox-GGN-SCORE}, \texttt{Prox-Grad} and \texttt{BCD}, we set $\beta = \tau_1\gamma\|A^\top y\|_\infty$, $\beta_\calG = (10-\tau_1)\gamma\|A^\top y\|_\infty$ with $\tau_1=0.9$ and $\gamma \in \{10^{-7},10^{-8}\}$. \texttt{SSNAL} can be made to return a solution estimate that has number of nonzero entries close to that of the true solution with a carefully tuned $\beta$ and simply setting $\beta_\calG = \beta$ (\cf, \cite[Table 1]{zhang2020efficient}). However, by our numerical experiments, \texttt{SSNAL} can be very sensitive to the choice of $\beta$ and $\beta_\calG$ if the goal is to have a reasonable convergence to the true solution with the correct within-group sparsity in the solution estimate. After a careful tuning, and for the sake of fair comparison, we set $\beta = \tau_1\gamma\|A^\top y\|_\infty$ and $\beta_\calG = \|A^\top y\|_\infty$ with $\gamma = 10^{-5}$ and $\tau_1 \in \{4,5,10,12\}$ (depending on the problem size) for \texttt{SSNAL}. For each group $j$, the parameter $\omega_j$ is set to the standard value $\sqrt{n_j}$ \cite{friedman2010note,simon2013sparse}, where $n_j=\card(j)$. For fairness, the estimate $\alpha_k = 1/L$ with $L = \lambda_{max} (A^\top A)$ is used in the proximal gradient and \texttt{SSNAL} algorithms.

We set $\mu$ to $1.2$ for $m=500$, $n=2000$, $2.0$ for $m=1000$, $n=12000$, and to $1.6$ in the remaining setups. The simulation results are shown in \tablename~\ref{tab:gl-results} and \figref{fig:sgl-plots}. As shown, \texttt{Prox-GGN-SCORE} terminates faster than \texttt{SSNAL}, \texttt{Prox-Grad} and \texttt{BCD} algorithms in most cases with the correct number of nonzero entries in its solution estimates. Additionally, the results further highlight the computational benefits of \texttt{Prox-GGN-SCORE} for overparameterized problems.
\begin{figure}[t!]
	\centering
	\includegraphics[width=\linewidth]{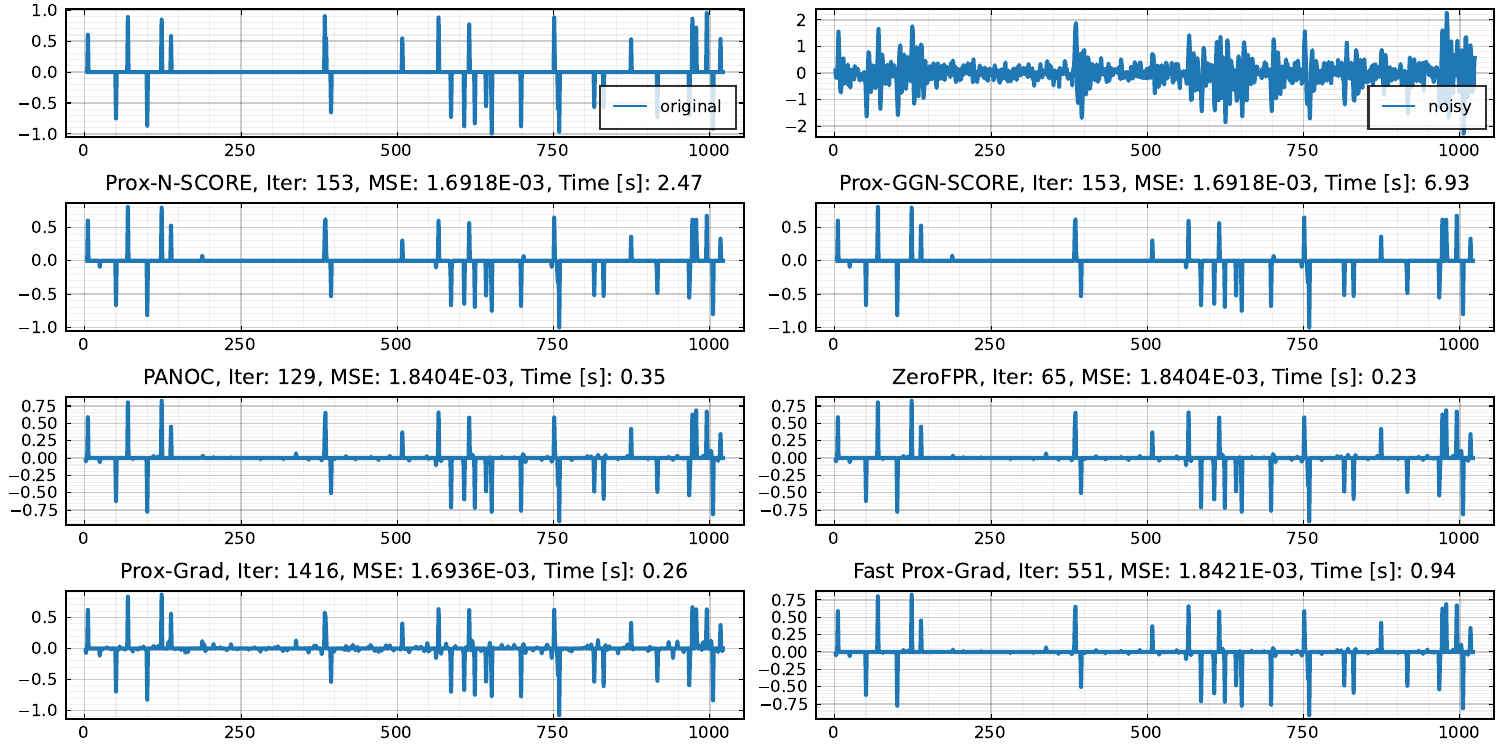}
	\caption{Sparse deconvolution via $\ell_1$-regularized least squares \eqref{eq:lsqexample} using \texttt{Prox-N-SCORE}, \texttt{Prox-GGN-SCORE}, \texttt{PANOC}, \texttt{ZeroFPR}, proximal gradient, and fast proximal gradient algorithms with $n=1024$.}
	\label{fig:deconv-l1}
\end{figure}
\begin{figure}[t!]
	\centering
	\includegraphics[width=\linewidth]{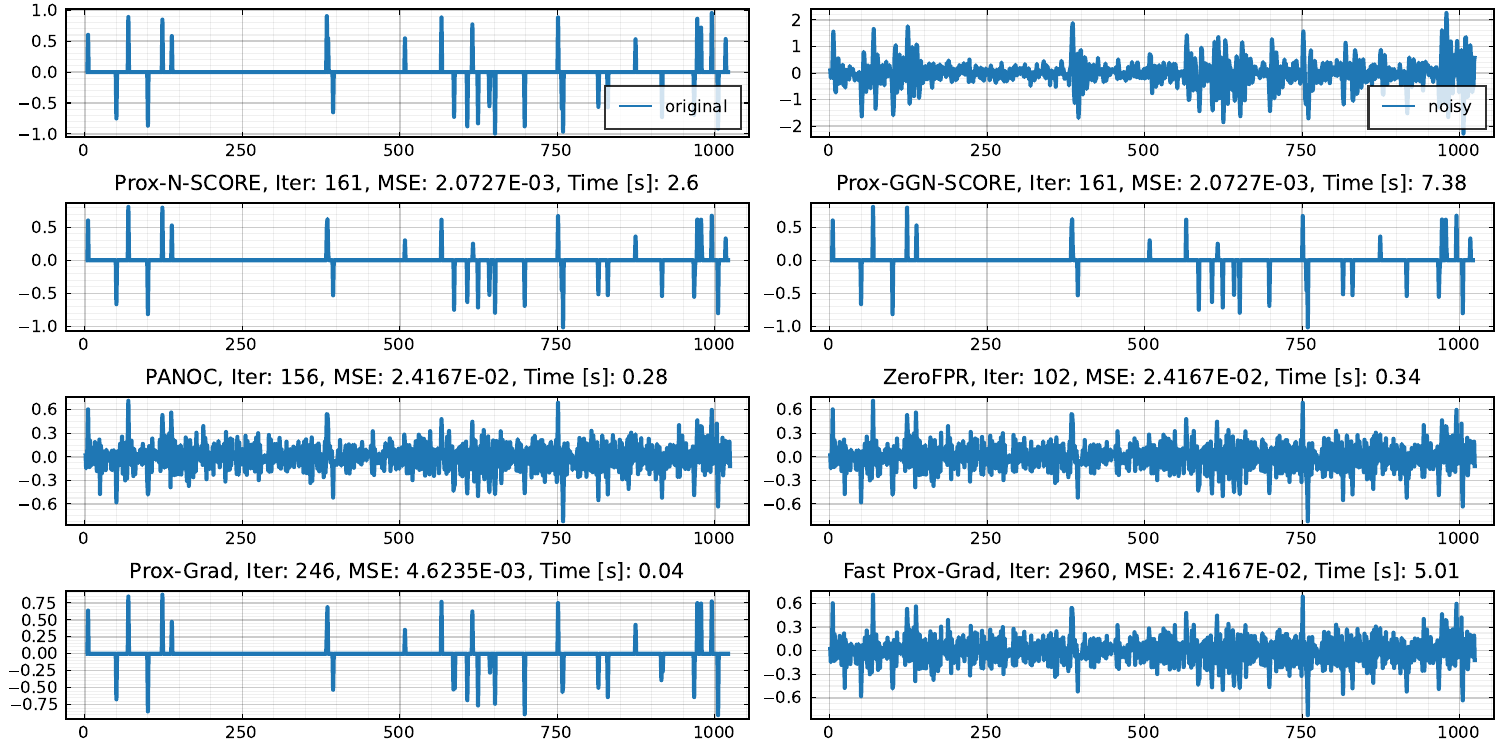}
	\caption{Sparse deconvolution via $\ell_2$-regularized least squares \eqref{eq:lsqexample} using \texttt{Prox-N-SCORE}, \texttt{Prox-GGN-SCORE}, \texttt{PANOC}, \texttt{ZeroFPR}, proximal gradient, and fast proximal gradient algorithms with $n=1024$.}
	\label{fig:deconv-l2}
\end{figure}
\subsection{Sparse deconvolution}\label{ss:lsqexample}
In this example, we consider the problem of estimating the unknown sparse input $x$ to a linear system, given a noisy output signal and the system response. That is,
\begin{align}
	\min\limits_{x\in\rr^n} \calL(x) \coloneqq \underbrace{\frac{1}{2}\norm{Ax - y}^2}_{\eqqcolon f(x)} + \beta \|x\|_p, \label{eq:lsqexample}
\end{align}
where $A\in \rr^{n\times n}$ and $y\in\rr^{n\times 1}$ are given data about the system which we randomly generate according to \cite[Example F]{selesnick2014sparse}.

We solve with both $\ell_1$ ($p=1$) and $\ell_2$ ($p=2$) regularizers, and set $\beta = 10^{-3}$. We set $\mu = 5\times 10^{-2}$ in the smooth approximation $g_s$ of $g$. We estimate $L = \lambda_{max} (A^\top A)$ and set $\alpha_k = 1/L$ in the proximal gradient algorithm. Again, for fairness, we provide this value of $L$ to each of \texttt{PANOC}, \texttt{ZeroFPR}, and fast proximal gradient procedures in our comparison. The simulation results are displayed in \figurename~\ref{fig:deconv-l1} and \figurename~\ref{fig:deconv-l2}. While \texttt{Prox-GGN-SCORE} and \texttt{Prox-N-SCORE} sometimes use more computational time in this problem, they provide better solution quality with smaller reconstruction error than the other tested algorithms, which is more desirable for signal reconstruction problems.
\section{Conclusion}\label{sec:conclusions}
In this paper, we introduced a self-concordant regularization framework for proximal quasi-Newton methods that solves large-scale convex composite optimization problems while preserving the structure induced by nonsmooth regularizers. Two algorithms are studied: a proximal Newton algorithm (\texttt{Prox-N-SCORE}) and a proximal generalized Gauss-Newton algorithm (\texttt{Prox-GGN-SCORE}). Both algorithms share an adaptive step length rule that eliminates the need for line search or trust-region subroutines, and they employ a diagonal variable metric derived from the smooth regularization. These design choices guarantee global convergence and yield favorable local behaviour under standard regularity assumptions. The \texttt{Prox-GGN-SCORE} variant relies on a low-rank approximation of the Hessian inverse that exploits the structure of prediction models (\eg, in machine learning). This makes it especially effective for overparameterized regimes where the number of decision variables exceeds the number of observations, allowing the method to scale to high-dimensional problems without forming full matrix inverses. Future work will focus on adaptive selection of the smoothing parameter, a theoretical analysis of how self-concordant smoothing influences optimization dynamics and generalization in scientific machine learning settings, and the derivation of explicit complexity estimates for both algorithms.

\bibliographystyle{tfs}
\bibliography{references.bib}

\end{document}